\newdimen\paperwidth
\newdimen\paperheight
\def\papersize#1#2{\let\p@persize\relax\paperwidth#1\paperheight#2}
\def\Afour{\papersize{210truemm}{297truemm}}
\let\p@persize\Afour
\let\onesidestyle\@twosidefalse
\let\twosidestyle\@twosidetrue
\def\margins{\@ifnextchar[{\@margins}{\@margins[\z@]}}
\def\@margins[#1]#2#3{
  \p@persize\dimen0 #3\dimen0 .5\dimen0\normalsize%
  \oddsidemargin-1truein\advance\oddsidemargin#2%
  \evensidemargin-1truein\advance\evensidemargin#2%
  \topmargin-1truein\advance\topmargin\dimen0\headsep\dimen0\footskip\dimen0%
  \textwidth\paperwidth\advance\textwidth-#2\advance\textwidth-#2%
  \textheight\paperheight\advance\textheight-#3\advance\textheight-#3%
  \headheight\baselineskip\advance\topmargin-.5\baselineskip%
  \advance\headsep-.5\baselineskip%
  \footheight\baselineskip
  \advance\textwidth-#1\advance\oddsidemargin#1
  \if@twoside\def\@themargin%
    {\ifodd\count\z@\oddsidemargin\else\evensidemargin\fi}\fi}
\def\headlinesep#1{\advance\topmargin\headsep\advance\topmargin -#1
  \advance\topmargin.5\baselineskip\headsep #1\advance\headsep-.5\baselineskip}
\def\headline{\if@twoside\let\n@xt\h@dlin@\else\let\n@xt\h@@dlin@\fi\n@xt}
\def\h@dlin@#1#2{%
  \def\@oddhead{%
    {{\leftskip\z@\rightskip\z@\noindent\normalsize#1}}}
  \def\@evenhead{%
    {{\leftskip\z@\rightskip\z@\noindent\normalsize#2}}}}
\def\h@@dlin@#1{%
  \def\@oddhead{{{\leftskip\z@\rightskip\z@\noindent\normalsize#1}}}}
\def\footline{\if@twoside\let\n@xt\f@tlin@\else\let\n@xt\f@@tlin@\fi\n@xt}
\def\f@tlin@#1#2{%
  \def\@oddfoot{%
    {{\leftskip\z@\rightskip\z@\noindent\normalsize#1}}}
  \def\@evenfoot{%
    {{\leftskip\z@\rightskip\z@\noindent\normalsize#2}}}}
\def\f@@tlin@#1{%
  \def\@oddfoot{{{\leftskip\z@\rightskip\z@\noindent\normalsize#1}}}}
\def\normalpage{\global\@specialpagefalse}
\def\ft{\@ifnextchar[{\ft@s}{\ft@}}
\def\ft@{\ft@@@s[\f@size]}
\def\ft@s[{\@ifnextchar{a}{\ft@sz[}{\ft@@s[}}
\def\ft@@s[{\@ifnextchar{s}{\ft@sz[}{\ft@@@s[}}
\def\ft@@@s[#1]{\ft@sz[at #1pt]}
\def\ft@sz[#1]#2{\font\fonttemp=#2 #1\fonttemp\ignorespaces}
\def\@@bold{bold}
\def\widebar{\ifx\math@version\@@bold
  \let\@widebar\@@@widebar\else\let\@widebar\@@widebar\fi\@widebar}
\def\@@widebar#1{\text{\setbox15\hbox{$#1$}%
  \dimen15 0.45\wd15\advance\dimen15 0.15\ht15%
  \dimen16\ht15\advance\dimen16 0.00em\advance\dimen16 0.3ex%
  \dimen17 0.65\wd15\advance\dimen17 0.05\ht15\advance\dimen17 0.1ex%
  \dimen18 0.035em\advance\dimen18 0.00ex
  \put[\dimen15,\dimen16][c]{\vrule depth 0pt height \dimen18 width \dimen17}}#1}
\def\@@@widebar#1{\text{\setbox15\hbox{$#1$}%
  \dimen15 0.45\wd15\advance\dimen15 0.15\ht15%
  \dimen16\ht15\advance\dimen16 0.00em\advance\dimen16 0.26ex%
  \dimen17 0.65\wd15\advance\dimen17 0.05\ht15\advance\dimen17 0.1ex%
  \dimen18 0.05em\advance\dimen18 0.00ex
  \put[\dimen15,\dimen16][c]{\vrule depth 0pt height \dimen18 width \dimen17}}#1}
\def\smallsquare{\raise-.065em\hbox{$\Box$}}
\def\smallblacksquare{%
  \kern.3ex\vrule depth-.03ex height1.27ex width1.15ex \kern-1.45ex \smallsquare}
\def\smallcircc{\mathop{\mkern3.5mu\text{\raise.58ex\hbox{\ft{lcircle10}a}}}}
\def\varemptyset{{\text{\raise.21ex\hbox{$\not$}}\mkern.15mu\mathrm{O}\mkern.15mu}}
  \let\epsilon\varepsilon
      \let\theta\vartheta
          \let\phi\varphi
   \let\emptyset\varemptyset
\let\Larg@\Large
\let\hug@\huge
\def\usepackage#1{\input{#1.sty}}
\let\input\@input
\def\r@adlabel#1#2{\global\@namedef{#1@\the\@key}{#2}}
\let\Large\Larg@
\let\huge\hug@
\def\smallskip{\vskip\smallskipamount}
\def\medskip{\vskip\medskipamount}
\def\bigskip{\vskip\bigskipamount}
\def\mytrivlist{\parsep\parskip\@nmbrlistfalse
  \my@trivlist \labelwidth\z@ \leftmargin\z@
  \itemindent\z@ \def\makelabel##1{##1}}
\def\my@trivlist{\global\@newlisttrue \@outerparskip\parskip}
\def\end#1{\csname end#1\endcsname\@checkend{#1}%
  \expandafter\endgroup\if@endpe\@doendpe\fi
  \if@ignore \global\@ignorefalse \ignorespaces\fi}
\def\put{\@ifnextchar[{\@put}{\@@rput[\z@,\z@][r]}}
\def\@put[#1]{\@ifnextchar[{\@@put[#1]}{\@@@@@put[#1]}}
\def\@@put[#1][{\@ifnextchar{l}{\@@lput[#1][}{\@@@put[#1][}}
\def\@@@put[#1][{\@ifnextchar{c}{\@@cput[#1][}{\@@@@put[#1][}}
\def\@@@@put[#1][{\@ifnextchar{r}{\@@rput[#1][}{\relax}}
\def\@@@@@put[{\@ifnextchar{l}{\@@lput[\z@,\z@][}{\@@@@@@put[}}
\def\@@@@@@put[{\@ifnextchar{c}{\@@cput[\z@,\z@][}{\@@@@@@@put[}}
\def\@@@@@@@put[{\@ifnextchar{r}{\@@rput[\z@,\z@][}{\@@@@@@@@put[}}
\def\@@@@@@@@put[#1]{\@@rput[#1][r]}
\let\hm@d@\leavevmode
\long\def\@@lput[#1,#2][l]#3{\setbox0\hbox{#3}\hm@d@\raise#2\hbox to\z@{\dimen0 #1%
  \advance\dimen0-\wd0\kern\dimen0\dp0\z@\ht0\z@\wd0\z@\box0\hss}\ignorespaces}
\long\def\@@cput[#1,#2][c]#3{\setbox0\hbox{#3}\hm@d@\raise#2\hbox to\z@{\dimen0 #1%
  \advance\dimen0-.5\wd0\kern\dimen0\dp0\z@\ht0\z@\wd0\z@\box0\hss}\ignorespaces}
\long\def\@@rput[#1,#2][r]#3{\setbox0\hbox{\kern#1\raise#2\hbox{#3}}%
  \dp0\z@\ht0\z@\wd0\z@\hm@d@\box0\ignorespaces}
\def\flbox{\@ifnextchar[{\@flbox}{\@@rflbox[\z@,\z@][r]}}
\def\@flbox[#1]{\@ifnextchar[{\@@flbox[#1]}{\@@@@@flbox[#1]}}
\def\@@flbox[#1][{\@ifnextchar{l}{\@@lflbox[#1][}{\@@@flbox[#1][}}
\def\@@@flbox[#1][{\@ifnextchar{c}{\@@cflbox[#1][}{\@@@@flbox[#1][}}
\def\@@@@flbox[#1][{\@ifnextchar{r}{\@@rflbox[#1][}{\relax}}
\def\@@@@@flbox[{\@ifnextchar{l}{\@@lflbox[\z@,\z@][}{\@@@@@@flbox[}}
\def\@@@@@@flbox[{\@ifnextchar{c}{\@@cflbox[\z@,\z@][}{\@@@@@@@flbox[}}
\def\@@@@@@@flbox[{\@ifnextchar{r}{\@@rflbox[\z@,\z@][}{\@@@@@@@@flbox[}}
\def\@@@@@@@@flbox[#1]{\@@rflbox[#1][r]}
\long\def\@@lflbox[#1,#2][l]#3{\@@lput[#1,#2][l]{%
  \vtop{\leftskip\z@\parindent\z@\raggedleft\hm@d@#3}}}
\long\def\@@cflbox[#1,#2][c]#3{\@@cput[#1,#2][c]{%
  \vtop{\leftskip\z@\parindent\z@\raggedcenter\hm@d@#3}}}
\long\def\@@rflbox[#1,#2][r]#3{\@@rput[#1,#2][r]{%
  \vtop{\leftskip\z@\parindent\z@\raggedright\hm@d@#3}}}
\def\maketitle{\par
 \begingroup
 \def\thefootnote{\fnsymbol{footnote}}
 \def\@makefnmark{\hbox 
 to 0pt{$^{\@thefnmark}$\hss}} 
 \if@twocolumn 
 \twocolumn[\@maketitle] 
 \else 
 \global\@topnum\z@ \@maketitle \fi\thispagestyle{plain}\@thanks
 \endgroup
 \setcounter{footnote}{0}
 \let\maketitle\relax
 \let\@maketitle\relax
 \gdef\@thanks{}\gdef\@author{}\gdef\@title{}\let\thanks\relax}
\def\@maketitle{ 
 \null
 \vskip 2em \begin{center}
 {\LARGE \@title \par} \vskip 1.5em {\large \lineskip .5em
\begin{tabular}[t]{c}\@author 
 \end{tabular}\par} 
 \vskip 1em {\large \@date} \end{center}
 \par
 \vskip 1.5em}
\def\partbeforeskip#1{\def\p@rtbeforeskip{#1}}
\def\partstyle#1{\def\p@rtstyl@{#1}}
\def\partdot#1{\def\partd@t{#1}}
\def\partafterskip#1{\def\p@rtafterskip{#1}}
\def\partintrostyle#1{\def\partintr@styl@{#1}}
\def\partintrodot#1{\def\partintr@dot{#1}}
\long\def\partintrosep#1{\long\def\partintr@sep{#1}}
\def\partnewpagetrue{\def\p@rtnewp@ge{\newpage}}
\def\partnewpagefalse{\long\def\p@rtnewp@ge{\par}}
\def\partname{Part}
\def\part{\p@rtnewp@ge\addvspace\p@rtbeforeskip\@afterindentfalse\secdef\@part\@spart}
\def\@part[#1]#2{\ifnum \c@secnumdepth >-1\relax  
        \refstepcounter{part}                     
        \def\@tempa{\addcontentsline{toc}{part}}  %
        \expandafter\@tempa\expandafter{\thepart  
          \hspace{1em}#1}\else                    
        \addcontentsline{toc}{part}{#1}\fi        
   {\p@rtstyl@                       
    \ifnum \c@secnumdepth >-1\relax        
      {\partintr@styl@\partname\ \thepart  
       \partintr@dot}\partintr@sep\nobreak 
    \fi                                    
    #2\partd@t\markboth{}{}\par}
    \nobreak                       
    \vskip\p@rtafterskip           
   \@afterheading                  
    }                              
\def\@spart#1{{\p@rtcentering\p@rtstyl@                      
    #1\partd@t\par}                 
    \nobreak                        
    \vskip\p@rtafterskip            
    \@afterheading                  
  }                                 
\newif\ifsection@ftind
\newif\ifsection@ftpar
\def\sectionbeforeskip#1{\def\s@ctbeforeskip{#1}}
\def\sectionstyle#1{\def\s@ctstyl@{#1}}
\def\sectiondot#1{\def\sectiond@t{#1}}
\def\sectionafterskip#1{\def\s@ctafterskip{#1}}
\def\sectionintrostyle#1{\def\sectionintr@styl@{#1}}
\def\sectionintro#1{\def\sectionintr@{#1}}
\def\sectionintrodot#1{\def\sectionintr@dot{#1}}
\def\sectionintrosep#1{\def\sectionintr@sep{#1}}
\def\sectionindenttrue{\def\s@ctind{\parindent}}
\def\sectionindentfalse{\def\s@ctind{\z@}}
\def\sectionafterindenttrue{\section@ftindtrue}
\def\sectionafterindentfalse{\section@ftindfalse}
\def\sectionafternewlinetrue{\section@ftpartrue}
\def\sectionafternewlinefalse{\section@ftparfalse}
\newif\ifsubsection@ftind
\newif\ifsubsection@ftpar
\def\subsectionbeforeskip#1{\def\ss@ctbeforeskip{#1}}
\def\subsectionstyle#1{\def\ss@ctstyl@{#1}}
\def\subsectiondot#1{\def\subsectiond@t{#1}}
\def\subsectionafterskip#1{\def\ss@ctafterskip{#1}}
\def\subsectionintrostyle#1{\def\subsectionintr@styl@{#1}}
\def\subsectionintro#1{\def\subsectionintr@{#1}}
\def\subsectionintrodot#1{\def\subsectionintr@dot{#1}}
\def\subsectionintrosep#1{\def\subsectionintr@sep{#1}}
\def\subsectionindenttrue{\def\ss@ctind{\parindent}}
\def\subsectionindentfalse{\def\ss@ctind{\z@}}
\def\subsectionafterindenttrue{\subsection@ftindtrue}
\def\subsectionafterindentfalse{\subsection@ftindfalse}
\def\subsectionafternewlinetrue{\subsection@ftpartrue}
\def\subsectionafternewlinefalse{\subsection@ftparfalse}
\newif\ifsubsubsection@ftind
\newif\ifsubsubsection@ftpar
\def\subsubsectionbeforeskip#1{\def\sss@ctbeforeskip{#1}}
\def\subsubsectionstyle#1{\def\sss@ctstyl@{#1}}
\def\subsubsectiondot#1{\def\subsubsectiond@t{#1}}
\def\subsubsectionafterskip#1{\def\sss@ctafterskip{#1}}
\def\subsubsectionintrostyle#1{\def\subsubsectionintr@styl@{#1}}
\def\subsubsectionintro#1{\def\subsubsectionintr@{#1}}
\def\subsubsectionintrodot#1{\def\subsubsectionintr@dot{#1}}
\def\subsubsectionintrosep#1{\def\subsubsectionintr@sep{#1}}
\def\subsubsectionindenttrue{\def\sss@ctind{\parindent}}
\def\subsubsectionindentfalse{\def\sss@ctind{\z@}}
\def\subsubsectionafterindenttrue{\subsubsection@ftindtrue}
\def\subsubsectionafterindentfalse{\subsubsection@ftindfalse}
\def\subsubsectionafternewlinetrue{\subsubsection@ftpartrue}
\def\subsubsectionafternewlinefalse{\subsubsection@ftparfalse}
\newif\ifparagraph@ftind
\newif\ifparagraph@ftpar
\def\paragraphbeforeskip#1{\def\p@rbeforeskip{#1}}
\def\paragraphstyle#1{\def\p@rstyl@{#1}}
\def\paragraphdot#1{\def\paragraphd@t{#1}}
\def\paragraphafterskip#1{\def\p@rafterskip{#1}}
\def\paragraphintrostyle#1{\def\paragraphintr@styl@{#1}}
\def\paragraphintro#1{\def\paragraphintr@{#1}}
\def\paragraphintrodot#1{\def\paragraphintr@dot{#1}}
\def\paragraphintrosep#1{\def\paragraphintr@sep{#1}}
\def\paragraphindenttrue{\def\p@rind{\parindent}}
\def\paragraphindentfalse{\def\p@rind{\z@}}
\def\paragraphafterindenttrue{\paragraph@ftindtrue}
\def\paragraphafterindentfalse{\paragraph@ftindfalse}
\def\paragraphafternewlinetrue{\paragraph@ftpartrue}
\def\paragraphafternewlinefalse{\paragraph@ftparfalse}
\newif\ifsubparagraph@ftind
\newif\ifsubparagraph@ftpar
\def\subparagraphbeforeskip#1{\def\sp@rbeforeskip{#1}}
\def\subparagraphstyle#1{\def\sp@rstyl@{#1}}
\def\subparagraphdot#1{\def\subparagraphd@t{#1}}
\def\subparagraphafterskip#1{\def\sp@rafterskip{#1}}
\def\subparagraphintrostyle#1{\def\subparagraphintr@styl@{#1}}
\def\subparagraphintro#1{\def\subparagraphintr@{#1}}
\def\subparagraphintrodot#1{\def\subparagraphintr@dot{#1}}
\def\subparagraphintrosep#1{\def\subparagraphintr@sep{#1}}
\def\subparagraphindenttrue{\def\sp@rind{\parindent}}
\def\subparagraphindentfalse{\def\sp@rind{\z@}}
\def\subparagraphafterindenttrue{\subparagraph@ftindtrue}
\def\subparagraphafterindentfalse{\subparagraph@ftindfalse}
\def\subparagraphafternewlinetrue{\subparagraph@ftpartrue}
\def\subparagraphafternewlinefalse{\subparagraph@ftparfalse}
\let\@partoken\par
\long\def\@@gobble#1{}
\def\ignorepar{\@ifnextchar\@partoken{\expandafter\ignorepar\@@gobble}{\ignorespaces}}
\def\@startsection#1#2#3#4#5#6{
   \@tempskipa #4\relax
   \csname if#1@ftind\endcsname\@afterindenttrue\else\@afterindentfalse\fi
   \advance\@tempskipa by\presection
   \if@nobreak \everypar{}\else
     \addpenalty{\@secpenalty}\addvspace{\@tempskipa}%
     \allowbreak\vskip -\presection \fi \@ifstar
     {\@ssect{#1}{#2}{#3}{#4}{#5}{#6}}{\@dblarg{\@sect{#1}{#2}{#3}{#4}{#5}{#6}}}}
\def\@sect#1#2#3#4#5#6[#7]#8{\def\object@type{#1}%
   \ifnum #2>\c@secnumdepth\def\@svsec{}\def\@tempb{}%
      \else\refstepcounter{#1}\def\@svsec{{\csname #1intr@styl@\endcsname%
        {\csname #1intr@\endcsname}\csname the#1\endcsname%
        \csname #1intr@dot\endcsname\kern\csname #1intr@sep\endcsname}}%
        \edef\@tempb{\noexpand\numberline{\csname the#1\endcsname}}\fi%
   \def\@tempa{\addcontentsline{toc}{#1}}%
   \csname if#1@ftpar\endcsname%
      \begingroup #6\relax%
        \@hangfrom{\hskip #3\relax\@svsec}{\interlinepenalty \@M{#8}%
        \csname #1d@t\endcsname\par}%
      \endgroup%
      \csname #1mark\endcsname{#7}%
      \expandafter\@tempa\expandafter{\@tempb #7}%
      \ifautolabel\label*{#8}\fi%
   \else%
      \def\@svsechd{#6\hskip #3\relax%
         \@svsec{#8}%
         \csname #1d@t\endcsname%
         \csname #1mark\endcsname{#7}%
         \expandafter\@tempa\expandafter{\@tempb #7}%
         \ifautolabel\label*{#8}\fi}\fi%
   \@xsect{#1}{#5}\ignorepar}
\def\@ssect#1#2#3#4#5#6#7{%
   \ifnum #2>\c@secnumdepth\def\@tempb{}\else \def\@tempb{\numberline{}}\fi%
     \def\@tempa{\addcontentsline{toc}{s#1}}%
     \csname if#1@ftpar\endcsname
        \begingroup #6\relax
           \@hangfrom{\hskip #3}{\interlinepenalty \@M{#7}%
           \csname #1d@t\endcsname\par}%
        \endgroup
        \csname s#1mark\endcsname{#7}%
        \ifstarredcontents\expandafter\@tempa\expandafter{\@tempb #7}\fi%
        \ifautolabel\label*{#7}\fi%
     \else%
        \def\@svsechd{#6\hskip #3\relax{#7}%
        \csname #1d@t\endcsname%
        \csname s#1mark\endcsname{#7}%
        \ifautolabel\label*{#7}\fi}\fi
   \@xsect{#1}{#5}\ignorepar}
\def\@xsect#1#2{
   \csname if#1@ftpar\endcsname 
       \par \nobreak \vskip #2\relax \@afterheading
    \else \global\@nobreakfalse \global\@noskipsectrue
       \everypar{\if@noskipsec \global\@noskipsecfalse
                   \clubpenalty\@M \hskip -\parindent
                   \begingroup \@svsechd \endgroup \unskip
                   \hskip #2\relax  
                  \else \clubpenalty \@clubpenalty
                    \everypar{}\fi}\fi\ignorespaces}
\def\section{\@startsection{section}{1}{\s@ctind}
  {\s@ctbeforeskip}{\s@ctafterskip}{\s@ctstyl@}}
\def\subsection{\@startsection{subsection}{2}{\ss@ctind}
  {\ss@ctbeforeskip}{\ss@ctafterskip}{\ss@ctstyl@}}
\def\subsubsection{\@startsection{subsubsection}{3}{\sss@ctind}
  {\sss@ctbeforeskip}{\sss@ctafterskip}{\sss@ctstyl@}}
\def\paragraph{\@startsection{paragraph}{4}{\p@rind}
  {\p@rbeforeskip}{\p@rafterskip}{\p@rstyl@}}
\def\subparagraph{\@startsection{subparagraph}{4}{\sp@rind}
  {\sp@rbeforeskip}{\sp@rafterskip}{\sp@rstyl@}}
\def\statementabove#1{\def\th@bove{#1}}
\def\statementstyle#1{\def\thstyl@{#1}}
\def\statementbelow#1{\def\thb@low{#1}}
\def\statementindentfalse{\let\thind@nt\relax}
\def\statementindenttrue{\let\thind@nt\indent}
\def\statementintrostyle#1{\def\thintr@style{#1}}
\def\statementintrodot#1{\def\thintr@dot{#1}}
\def\statementintrosep#1{\def\thintr@sep{#1}}
\def\statementintrobrackets#1#2{\def\thintr@left{#1}\def\thintr@right{#2}}
\def\@thskip{\dimen100\lastskip\vskip-\dimen100%
  \th@bove\dimen101\lastskip\vskip-\dimen101%
  \ifdim\dimen100>\dimen101\else\dimen100\dimen101\fi\vskip\dimen100\vskip0pt}
\long\def\@@newtheorem#1#2#3{%
  \newenvironment{#3}%
    {\def\object@type{#3}\par\@thskip%
     \@ifnextchar[{\@enva{#3}{\thstyl@#1{#2}}}{\@envb{#3}{\thstyl@#1{#2}}}}%
    {\end{#3@}}%
  \@ifnextchar[{\@nothm{#3}}{\@nnthm{#3}}}
\def\@nothm#1[#2]#3{%
  \@ifundefined{c@#2}{\@latexerr{No theorem environment `#2' defined}\@eha}%
  {\expandafter\@ifdefinable\csname #1@\endcsname
  {\global\@namedef{the#1}{\@nameuse{the#2}}%
   \global\@namedef{c@#1}{\@nameuse{c@#2}}
   \global\@namedef{p@#1}{\@nameuse{p@#2}}
   \global\@namedef{#1@}{\@nnnthm{#2}{#3}}%
   \global\@namedef{end#1@}{\@endtheorem}}}}
\def\@nnnthm#1#2{\refstepcounter
    {#1}\@ifnextchar[{\@ynnnthm{#1}{#2}}{\@xnnnthm{#1}{#2}}}
\def\@xnnnthm#1#2{\@begintheorem{#2}{\csname the#1\endcsname}\ignorespaces}
\def\@ynnnthm#1#2[#3]{\@opargbegintheorem{#2}{\csname the#1\endcsname}{#3}\ignorespaces}
\def\renewtheorem{\@ifnextchar[{\@renewtheorem}{\@renewtheorem[{}{}]}}
\long\def\@renewtheorem[#1]{\@@renewtheorem#1}
\long\def\@@renewtheorem#1#2#3{%
  \expandafter\let\csname#3@\endcsname\undefined
  \renewenvironment{#3}%
    {\def\object@type{#3}\par\@thskip%
     \@ifnextchar[{\@enva{#3}{\thstyl@#1{#2}}}{\@envb{#3}{\thstyl@#1{#2}}}}%
    {\end{#3@}}%
  \@ifnextchar[{\@nothm{#3}}{\@nnthm{#3}}}
\def\@begintheorem#1#2{\@opargbegintheorem{#1}{#2}{}}
\def\@opargbegintheorem#1#2#3{%
        \edef\@tempx{#1}%
        \expandafter\let\expandafter\@tempy#2
        \def\@tempz{#3}%
        \mytrivlist\item[\thind@nt\hskip\labelsep%
        {\thintr@style%
          #1\ifx\@tempx\@empty\else\ifx\@tempy\relax\else\kern1ex\fi\fi#2%
          \ifx\@tempz\@empty%
            \ifx\@tempx\@empty\ifx\@tempy\relax%
            \else\thintr@dot\thintr@sep\fi\else\thintr@dot\thintr@sep\fi%
            \else%
            \ifx\@tempx\@empty\ifx\@tempy\relax\else\kern1ex\fi\else\kern1ex\fi%
           \thintr@left{#3}\thintr@right\thintr@dot\thintr@sep\fi}%
            \hskip-\labelsep]%
        \ifautolabel\label*{#3}\fi}
\def\@endtheorem{\endtrivlist\thb@low}
\def\proofname{Proof}
\def\proofabove#1{\def\pf@bove{#1}}
\def\proofstyle#1{\def\pfstyl@{#1}}
\def\proofbelow#1{\def\pfb@low{#1}}
\def\proofindentfalse{\let\pfind@nt\relax}
\def\proofindenttrue{\let\pfind@nt\indent}
\def\proofintrostyle#1{\def\pfintr@style{#1}}
\def\proofintrodot#1{\def\pfintr@dot{#1}}
\def\proofintrosep#1{\def\pfintr@sep{#1}}
\def\proofintrobrackets#1#2{\def\pfintr@left{#1}\def\pfintr@right{#2}}
\def\@pfskip{\dimen100\lastskip\vskip-\dimen100%
  \pf@bove\dimen101\lastskip\vskip-\dimen101%
  \ifdim\dimen100>\dimen101\else\dimen100\dimen101\fi\vskip\dimen100\vskip0pt}
\renewenvironment{proof}%
  {\@pfskip\mytrivlist\item[\pfind@nt]\@ifnextchar[{\pro@f}{\pro@f[\prooftag]}}
  {\ifvoid\provedbox\else\hproved\fi\endtrivlist\pfb@low}
\def\pro@f[#1]{\setbox\provedbox\hbox{\provedboxcontents{#1}}\proofintro{#1}}
\def\proofintro#1{\expandafter\def\expandafter\@tempa\expandafter{#1}%
  {\pfintr@style{\proofname\ifx\@tempa\empty\else\kern1ex\pfintr@left{#1}%
  \pfintr@right\fi}\pfintr@dot\pfintr@sep}\pfstyl@\ignorespaces}
\def\provedmark#1{\def\prm@rk{#1}}
\def\provedsep#1{\def\prs@p{#1}}
\def\provedtexttrue{\def\prb@x##1{\fbox{\small##1}}}
\def\provedtextfalse{\def\prb@x##1{\prm@rk}}
\def\provedmarkrighttrue{\let\prhf@l\hfill}
\def\provedmarkrightfalse{\let\prhf@l\relax}
\def\provedboxcontents#1{\expandafter\def\expandafter\@tempa\expandafter{#1}%
  \ifx\@tempa\empty\prm@rk\else\prb@x{#1}\fi}
\def\proved{\ifmmode\eqno{\box\provedbox}\else\hproved\fi}
\def\hproved{\unskip\nobreak\prhf@l\penalty50\prs@p\hbox{}\nobreak\prhf@l
  \box\provedbox{\finalhyphendemerits=0\par}}
\def\captionstyle#1{\def\c@ptstyl@{#1}}
\def\captionintrostyle#1{\def\c@pintr@style{#1}}
\def\captionintrodot#1{\def\c@pintr@dot{#1}}
\def\captionintrosep#1{\def\c@pintr@sep{#1}}
\long\def\@makecaption#1#2{%
    \vskip\captionskip
    \setbox\@tempboxa\hbox{%
      \ifproofing\@ifundefined{the@label}{}
        {\hbox to 0pt{\vbox to 0pt{\vss\hbox{\tiny\the@label}\bigskip}\hss}}\fi
      \c@ptstyl@{\c@pintr@style #1\c@pintr@dot}\ignorespaces #2}%
    \@captionwidth=\hsize \advance\@captionwidth-2\@captionmargin
    \ifdim \wd\@tempboxa >\@captionwidth {%
        \rightskip=\@captionmargin\leftskip=\@captionmargin
        \unhbox\@tempboxa\par}%
      \else
        \hbox to\hsize{\hfil\box\@tempboxa\hfil}%
    \fi}
\def\end@Float#1{%
  \expandafter\caption\expandafter[\the@title]{%
   {\c@pintr@style%
   \ifx\the@caption\empty\ifx\the@title\empty
   \else\c@pintr@sep\fi\else\c@pintr@sep\fi
    \the@title\ifx\the@caption\empty%
     \expandafter\label\expandafter*\expandafter{\the@label}%
    \else\ifx\the@title\empty%
     \expandafter\label\expandafter*\expandafter{\the@label}%
    \else\c@pintr@dot\c@pintr@sep%
     \expandafter\label\expandafter*\expandafter{\the@label}\fi\fi}%
   \ignorespaces\the@caption}%
  \end{#1}}
\renewenvironment{Figure}{\@ifnextchar[%
  {\@myFloat{figure}}{\@myFloat{figure}[htbp]}}{\end@Float{figure}}
\def\@myFloat#1[#2]#3{%
  \def\color@hbox{}\def\color@vbox{}\def\color@endbox{}%
  \begin{#1}[#2]\def\the@label{#3}}
\def\fig#1{\@ifnextchar[{\@fig{#1}}{\@fig{#1}[0pt]}}
\def\@fig#1[#2]#3{\@ifnextchar[{\@@fig{#1}[#2]{#3}}{\@@fig{#1}[#2]{#3}[0pt]}}
\def\@@fig#1[#2]#3[#4]#5#6{%
  \def\the@title{#5}\def\the@caption{#6}\centerline{\fig@{#1}{#2}{#3}}\vskip#4}
\def\fig@@#1#2#3{\leavevmode{\figstyl@\vrule width 0pt height 1.8ex%
 \smash{\framebox{\strut\def\@temp{#1}\ifx\@temp\@empty{ #3 }\else{ #1 }\fi}}}}
\def\fig@@@#1#2#3{\leavevmode\kern#2\epsfbox{#3}}
\def\figstyle#1{\def\figstyl@{#1}}
\newcounter{diagram}
\let\thediagram\theequation
\def\ftype@diagram{2}
\def\ext@diagram{lod}
\def\diagram{\@float{diagram}}
\let\enddiagram\end@float
\newif\if@diagnum
\def\diag#1{\@ifnextchar[{\@diag{#1}}{\@diag{#1}[0pt]}}
\def\@diag#1[#2]#3{\@ifnextchar[{\@@diag{#1}[#2]{#3}}{\@@diag{#1}[#2]{#3}[0pt]}}
\def\@@diag#1[#2]#3[#4]#5{
  \def\the@tag{#5}\@eqnswtrue%
  \centerline{\setbox0\hbox{\diag@{#1}{#2}{#3}}
  \dimen0 -0.5\wd0\dimen1 0.5\ht0\box0%
  \advance\dimen0 0.5\hsize\advance\dimen0 -\rightskip\advance\dimen1 #4%
  \let\@currentlabel\the@tag%
  \setbox0\hbox to 0pt{\hss%
    \fontfamily{cmr}\fontshape{n}\fontseries{m}\selectfont(\the@tag)}%
  \ifx\the@tag\@empty\refstepcounter{equation}\let\@currentlabel\theequation%
    \setbox0\hbox to 0pt{\hss%
      \fontfamily{cmr}\fontshape{n}\fontseries{m}\selectfont(\thediagram)}\fi%
  \if@eqnsw\else\let\@currentlabel\relax\setbox0\hbox to 0pt{}\fi%
  \advance\dimen1 -0.5\ht0%
  \put[\dimen0,\dimen1][l]{%
    \box0\expandafter\label\expandafter*\expandafter{\the@label}\kern0.15em}}}
\def\diag@@#1#2#3{\leavevmode{\diagstyl@\vrule width 0pt height 1.8ex%
 \smash{\framebox{\strut\def\@temp{#1}\ifx\@temp\@empty{ #3 }\else{ #1 }\fi}}}}
\def\diag@@@#1#2#3{\leavevmode\kern#2\epsfbox{#3}}
\def\diagstyle#1{\def\diagstyl@{#1}}
\def\showfiguresfalse{\let\fig@\fig@@}
\def\showfigurestrue{\let\fig@\fig@@@}
\def\showdiagramsfalse{\let\diag@\diag@@}
\def\showdiagramstrue{\let\diag@\diag@@@}
\def\n@number{\@eqnswfalse\let\@currentlabel\relax\let\the@tag\relax}
\def\equation{$$
  \@eqnswtrue\def\object@type{equation}\let\nonumber\n@number%
  \advance\c@equation1\edef\@currentlabel{\theequation}\advance\c@equation-1%
  \def\the@tag{\refstepcounter{equation}\eqno\hbox{\@eqnnum}}}
\def\tag#1{\edef\@currentlabel{#1}\def\the@tag{\eqno\hbox{\reset@font\rm(#1)}}}
\def\endequation{\the@tag$$
  \global\@ignoretrue}
\let\it@m\item
\def\item{\@ifnextchar[{\item@}{\item@@}}
\def\item@[#1]{\it@m[#1]\vskip-\lastskip\vskip\itemsep}
\def\item@@{\it@m\vskip-\lastskip\vskip\itemsep}
\def\s@titemsep{\@ifnextchar[{\s@@titemsep}{\relax}}
\def\s@@titemsep[#1]{\itemsep#1}
\let\@itemize\itemize
\let\@enditemize\enditemize
\let\@enumerate\enumerate
\let\@endenumerate\endenumerate
\renewenvironment{enumerate}
{\@enumerate\itemsep3pt\parsep0pt\topsep0pt\partopsep0pt\s@titemsep}
{\@endenumerate\vskip-\lastskip\vskip\itemsep}
\let\@description\description
\let\@enddescription\enddescription
\def\thebibliography#1{%
 \section*{\refname}\vskip-\lastskip%
 \list{[\arabic{bibenumi}]}{\topsep0pt\settowidth\labelwidth{[#1]}%
 \leftmargin\labelwidth\advance\leftmargin\labelsep\usecounter{bibenumi}}%
 \def\newblock{\hskip .11em plus .33em minus .07em}%
 \sloppy\clubpenalty4000\widowpenalty4000\sfcode`\.=1000\relax}
\let\@ref@\ref
\let\@pageref@\pageref
\let\@fullref@\fullref
\let\@Fullref@\Fullref
\let\@reftype@\reftype
\let\@Reftype@\Reftype
\let\@label@\label
\let\@cite@\cite
\let\@bibitem@\bibitem
\def\label{\@ifnextchar*{\label@}{\label@{}}}
\def\label@#1#2{\@label@#1{#2}\putl@bel{#2}\ignorespaces}
\def\putl@b@l#1{\put[0pt,.25\baselineskip]{%
  \hbox{\labc@lor{\fontfamily{cmr}\fontshape{n}\fontseries{m}\selectfont%
  \tiny\setbox5\hbox{\vphantom{X}\smash{\ns#1}}%
  \hbox to 0pt{\hss\tiny$\blacktriangledown$\kern-.085em}%
  \raise2.25ex\hbox to 0pt{\hss\framebox{\box5}}}}}}
\def\putr@fl@bel#1{{\let\labc@lor\refc@lor\putl@bel{#1}}}
\def\ref@#1{\@ref@{#1}\putr@fl@bel{#1}}
\def\pageref@#1{\@pageref@{#1}\putr@fl@bel{#1}}
\def\fullref@#1{\@fullref@{#1}\putr@fl@bel{#1}}
\def\Fullref@#1{\@Fullref@{#1}\putr@fl@bel{#1}}
\def\reftype@#1{\@reftype@{#1}\putr@fl@bel{#1}}
\def\Reftype@#1{\@Reftype@{#1}\putr@fl@bel{#1}}
\def\ref@@#1{\leavevmode\refc@lor{\rm$\langle$#1$\rangle$}}
\let\pageref@@\ref@@
\let\Fullref@@\ref@@
\let\fullref@@\ref@@
\let\reftype@@\ref@@
\let\Reftype@@\ref@@
\def\bibitem{\@ifnextchar[{\bibitem@@}{\bibitem@@@}}
\def\bibitem@@[#1]#2{\@bibitem@[#1]{#2}\putl@bel{#2}}
\def\bibitem@@@#1{\@bibitem@{#1}\putl@bel{#1}\ignorespaces}
\def\cit@{\@ifnextchar[{\@cit@@@}{\@cit@@}}
\def\@cit@@#1{\@cite@{#1}{\let\labc@lor\citc@lor\putl@bel{#1}}}
\def\@cit@@@[#1]#2{\@cite@[#1]{#2}{\let\labc@lor\citc@lor\putl@bel{#2}}}
\def\cit@@{\@ifnextchar[{\cit@@@@}{\cit@@@}}
\def\cit@@@#1{\leavevmode{\citc@lor\rm[#1]}}
\def\cit@@@@[#1]#2{\leavevmode{\citc@lor\rm[#2, #1]}}
\def\showcitationstrue{\let\cite\cit@}
\def\showcitationsfalse{\let\cite\cit@@}
\def\showreferencestrue{%
  \let\ref\ref@\let\pageref\pageref@%
  \let\fullref\fullref@\let\Fullref\Fullref@%
  \let\reftype\reftype@\let\Reftype\Reftype@}
\def\showreferencesfalse{%
  \let\ref\ref@@\let\pageref\pageref@@%
  \let\fullref\fullref@@\let\Fullref\Fullref@@%
  \let\reftype\reftype@@\let\Reftype\Reftype@@}
\def\showlabelstrue{\let\putl@bel\putl@b@l}
\def\showlabelsfalse{\let\putl@bel\hid@@}
\def\postit@{\@ifnextchar[{\postit@@}{\p@tp@stit}}
\def\postit@@[#1]{\postit@@@#1,@}
\def\postit@@@#1,{\@ifnextchar{@}{\p@@tp@stit{#1}}{\postit@@@@#1,}}
\def\postit@@@@#1,#2,@{\p@@@tp@stit{#1}{#2}}
\long\def\p@tp@stit#1{\put[0pt,.25\baselineskip]{%
  \hbox{\postitc@lor{\fontfamily{cmr}\fontshape{n}\fontseries{m}\selectfont%
  \tiny\setbox5\hbox{\vphantom{X}\smash{\ns#1}}%
  \hbox to 0pt{\hss\tiny$\blacktriangledown$\kern-.085em}%
  \raise2.25ex\hbox to 0pt{\hss\framebox{\box5}}}}}}
\long\def\p@@tp@stit#1@#2{\put[0pt,.25\baselineskip]{%
  \hbox{\postitc@lor{\fontfamily{cmr}\fontshape{n}\fontseries{m}\selectfont%
  \tiny\setbox5\hbox{\vbox{\hsize#1\leftskip\z@\raggedright
  \parindent\z@{\ns#2\par}\vss}}%
  \hbox to 0pt{\hss\tiny$\blacktriangledown$\kern-.085em}%
  \raise2.25ex\hbox to 0pt{\hss\framebox{\box5}}}}}}
\long\def\p@@@tp@stit#1#2#3{\put[0pt,.25\baselineskip]{%
  \hbox{\postitc@lor{\fontfamily{cmr}\fontshape{n}\fontseries{m}\selectfont%
  \tiny\setbox5\hbox{\vbox to #2{\hsize#1\leftskip\z@\raggedright
  \parindent\z@{\ns#3\par}\vss}}%
  \hbox to 0pt{\hss\tiny$\blacktriangledown$\kern-.085em}%
  \raise2.25ex\hbox to 0pt{\hss\framebox{\box5}}}}}}
\def\postitc@lor{\color{postitcolor}}
\def\showpostittrue{\let\postit\postit@}
\def\showpostitfalse{\let\postit\hid@@@}
\long\def\hid@@#1{\ignorespaces}
\def\hid@@@{\@ifnextchar[{\hid@@@@}{\hid@@}}
\long\def\hid@@@@[#1]{\hid@@}
\newtheorem{stat}{\statname}  \unnumbered{stat}
\newtheorem{nstat}{\nstatname}[section]
\newtheorem[{\ns}{}]{definition}[nstat]{Definition}
\newtheorem{lemma}[nstat]{Lemma}
\newtheorem{proposition}[nstat]{Proposition}
\newtheorem{theorem}[nstat]{Theorem}
\newtheorem[{\ns}{}]{exercise}[nstat]{Exercise}
\newtheorem[{\ns}{}]{example}[nstat]{Example}
\newtheorem[{\ns}{}]{remark}[nstat]{Remark}
\let\ns\normalshape
\def\brasl#1{$($\hbox to .5em{\hss\textsl{#1\kern.05em}\hss}$)$}
\def\fieldstyle#1{{\mathbb #1}}
\def\calstyle#1{{\mathcal #1}}
\newcommand{\R}{\fieldstyle R}
\newcommand{\Z}{\fieldstyle Z}
\newcommand{\Q}{\fieldstyle Q}
\newcommand{\T}{{\calstyle T}}
\newcommand{\E}{{\mathrm{E}}}
\newcommand{\SE}{{\mathrm{SE}}}
\renewcommand{\O}{{\mathrm{O}}}
\newcommand{\SO}{{\mathrm{SO}}}
\renewcommand{\(}{(\mkern-3.5mu(}
\renewcommand{\)}{)\mkern-3.5mu)}
\begin{document}

\title{\large\bf ON THE GENERIC TRIANGLE GROUP}

\author{
\normalsize\textsl{Stefano Isola}
and
\normalsize\textsl{Riccardo Piergallini}\\[2pt]
\normalsize Scuola di Scienze e Tecnologie\\[-3pt]
\normalsize\textsl{Universit\`a di Camerino -- Italy}
}

\date{}

\maketitle

\vglue-3mm\vglue0pt

\begin{abstract}\vglue3mm\noindent
We introduce the concept of a generic Euclidean triangle $\tau$ and study the group $G_\tau$ generated by the reflection across the edges of $\tau$. In particular, we prove that the subgroup $T_\tau$ of all translations in $G_\tau$ is free abelian of infinite rank, while the index 2 subgroup $H_\tau$ of all orientation preserving transformations in $G_\tau$ is free metabelian of rank 2, with $T_\tau$ as the commutator subgroup. As a consequence, the group $G_\tau$ cannot be finitely presented and we provide explicit minimal infinite presentations of both $H_\tau$ and $G_\tau$. This answers in the affirmative the problem of the existence of a minimal presentation for the free metabelian group of rank 2. Moreover, we discuss some examples of non-trivial relations in $T_\tau$ holding for given non-generic triangles $\tau$.

\end{abstract}


\section{Introduction\label{intro/sec}}

The term \textsl{triangle group} is generally reserved in the literature to the group $G_\tau$ generated by the reflections $r_1,r_2,r_3$ across the sides of an Euclidean, spherical or hyperbolic triangle $\tau$ with internal angles $\alpha_i = \pi/n_i$, where the specific geometry depends on $1/n_1 + 1/n_2 + 1/n_3$ being $=1$, $> 1$ or $< 1$, respectively. The structure of these groups is well understood since the seminal works by Fricke and Klein \cite{FrKl97} and Coxeter \cite{Co34} (see also \cite{CoMo79} and \cite{Mil75}). In particular, based on the fact that the triangle $\tau$ tiles the plane or the sphere, we have the finite presentation $G_\tau = \langle x_1,x_2,x_3 \,|\, x_1^2,x_2^2,x_3^2,(x_2x_3)^{n_1},(x_3x_1)^{n_2},(x_1x_2)^{n_3}\rangle$ with the symbol $x_i$ corresponding to the reflection $r_i$.

The hyperbolic case (the only non-trivial one) has been widely studied, with a special focus on its complex version, and several notions of generalized triangle groups have been considered, in terms of finite presentation independently on the original geometric setting. 

But little seems to be know about the group $G_\tau$ for more general triangles $\tau$, even if the strong hypothesis on the angles $\alpha_i$ is just mildly relaxed to include rational multiples of $\pi$, that is in the case of $\tau$ a \textsl{rational triangle} with $\alpha_i = m_i \pi/n_i$.

In this note we study the group $G_\tau$ for an arbitrary Euclidean triangle $\tau$, starting from the case of \textsl{generic triangles}. These are introduced in Section \ref{triangle/sec} as the triangles whose edge lengths are algebraically independent over the rationals (up to a common factor), and can be considered as the opposite to the rational triangles in the spectrum of all Euclidean triangles. In particular, we show that generic triangles include \textsl{typical triangles}, the ones whose angles are linearly independent over the rational.

Our main results on the structure of the \textsl{triangle group} $G_\tau$ for a generic triangle $\tau$, are presented in Sections \ref{translations/sec} and \ref{rotations/sec}, after a brief discussion of some generalities about $G_\tau$ and its linearization $S_\tau \subset \O(2)$ in Section \ref{reflections/sec}. They concern the \textsl{translation subgroup} $T_\tau$, consisting of all translations in $G_\tau$, and the \textsl{rotation subgroup} $H_\tau$, the index 2 subgroup of all orientation preserving transformations in $G_\tau$. Namely, we show that $T_\tau$ is free abelian of infinite rank generated by the translation $t_1 = (r_1r_2r_3)^2$ and its conjugates in $G_\tau$ (Theorem \ref{T-tau/thm}), while $H_\tau$ is free metabelian of rank 2 generated by the rotations $r_2r_1$ and $r_1r_3$ (Theorem \ref{H-tau/thm}), with $[H_\tau,H_\tau] = T_\tau$. As a consequence, $G_\tau$ cannot be finitely presented. Moreover, we provide explicit presentations for $H_\tau$ (Theorem \ref{H-tau-min/thm}) and $G_\tau$ (Theorem \ref{G-tau-min/thm}), which are minimal in the sense that no relation can be removed without changing the group (Theorem \ref{minimality/thm}). From the purely group theoretical viewpoint, this solves the problem of finding a minimal presentation for the free metabelian group of rank 2 (cf. \cite{BCGS14}).

Finally, in Section \ref{examples/sec} we discuss some examples of non-trivial relations in $T_\tau$, holding for continuous families of non-generic but typical triangles $\tau$ and for certain isolated such triangles, respectively.


\section{Generic triangles\label{triangle/sec}}

Given an Euclidean triangle $\tau = A_1A_2A_3$, let $\ell_i > 0$ denote the length of the edge $e_i=A_jA_k$ and $\alpha_i > 0$ denote the measure in radians of the (non-oriented) interior angle $A_jA_iA_k$, with $\{i,j,k\} = \{1,2,3\}$.

\begin{definition}\label{generic/def}
We call $\tau$ a \textsl{generic triangle} if for some $k > 0$ (hence for almost every $k \in \R$) the real numbers $k \ell_1, k \ell_2$ and $k \ell_3$ are \textsl{algebraically independent} (over the rationals), namely it does not exist any non-trivial polynomial $p(x_1,x_2,x_3) \in \Z[x_1,x_2,x_3]$ such that $p(k \ell_1, k \ell_2, k \ell_3) = 0$.
\end{definition}

A different formulation of the above condition is that for every non-trivial polynomial $p(x_1,x_2,x_3) \in \Z[x_1,x_2,x_3]$ the polynomial $q(x) = p(\ell_1 x,\ell_2 x,\ell_3 x)$ is non-trivial in $\R[x]$, or equivalently the field $\Q(\ell_1 x,\ell_2 x,\ell_3 x)$ has transcendence degree 3 over $\Q$ (see \cite[Sec. 6.4]{Rot03}). Then a straightforward argument on cardinalities shows that generic triangles are dense in the space of all Euclidean triangles (the same argument also explains why ``some $k$'' could be replaced by ``almost every $k$'' in the definition).

Next proposition just translates Definition \ref{generic/def} in terms of trigonometric functions of the interior angles of the triangle $\tau$.

\begin{proposition}\label{trig/thm}
An Euclidean triangle $\tau$ as above is a generic triangle if and only one of the following equivalent properties holds:
\begin{enumerate}
\item[\brasl{s}] for some (hence almost every) $k \in \R$ the real numbers $k \sin \alpha_1, k \sin \alpha_2$ and $k \sin \alpha_3$ are algebraically independent over the rationals;
\item[\brasl{c}] for some (hence almost every) $k \in \R$ the real numbers $k \cos \alpha_1, k \cos \alpha_2$ and $k \cos \alpha_3$ are algebraically independent over the rationals.
\end{enumerate}
\end{proposition}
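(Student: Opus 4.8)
The plan is to rephrase everything in terms of transcendence degrees over $\Q$. The first step is an elementary \textsl{homogenization lemma}: for nonzero reals $a_1,a_2,a_3$ there is some $k$ (and then automatically almost every $k$) for which $ka_1,ka_2,ka_3$ are algebraically independent over $\Q$ if and only if $a_2/a_1$ and $a_3/a_1$ are algebraically independent over $\Q$. To see this, take $k$ transcendental over $\Q(a_1,a_2,a_3)$ and split a hypothetical relation $p(ka_1,ka_2,ka_3)=0$, with $0\neq p\in\Z[x_1,x_2,x_3]$, into its homogeneous components $p=\sum_d p_d$; transcendence of $k$ forces $p_d(a_1,a_2,a_3)=0$, hence $p_d(1,a_2/a_1,a_3/a_1)=0$, for each $d$, and if $a_2/a_1,a_3/a_1$ are algebraically independent this makes every $p_d$, hence $p$, vanish identically, a contradiction. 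Conversely a polynomial relation between $a_2/a_1$ and $a_3/a_1$ homogenizes to a homogeneous relation $F(a_1,a_2,a_3)=0$, and then $F(ka_1,ka_2,ka_3)=0$ for every $k$. The same computation, already sketched after Definition \ref{generic/def}, shows that when no homogeneous relation holds the exceptional set of $k$ is a countable union of finite sets, which accounts for the ``almost every'' clause. In particular $\tau$ is generic if and only if $\ell_2/\ell_1$ and $\ell_3/\ell_1$ are algebraically independent over $\Q$.

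Property \brasl{s} is then immediate from the law of sines $\ell_i=2R\sin\alpha_i$, with $R$ the circumradius: it gives $\sin\alpha_i/\sin\alpha_1=\ell_i/\ell_1$, so by the lemma \brasl{s} holds precisely when $\ell_2/\ell_1$ and $\ell_3/\ell_1$ are algebraically independent, that is, precisely when $\tau$ is generic. One can even bypass the lemma here, since $(\ell_1,\ell_2,\ell_3)$ and $(\sin\alpha_1,\sin\alpha_2,\sin\alpha_3)$ are proportional and genericity is scale invariant.

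For property \brasl{c} I would first dispose of the degenerate case $\alpha_i=\pi/2$ for some $i$: then $k\cos\alpha_i=0$ for all $k$, so \brasl{c} fails, whereas Pythagoras gives the homogeneous relation $\ell_i^2=\ell_j^2+\ell_k^2$, so $\tau$ is not generic; both sides fail and the equivalence holds. Otherwise all $c_i:=\cos\alpha_i$ are nonzero, so by the lemma \brasl{c} is equivalent to the algebraic independence of $u:=c_2/c_1$ and $v:=c_3/c_1$. On one hand, the law of cosines writes each $c_i$ as a rational function with integer coefficients of $r_2:=\ell_2/\ell_1$ and $r_3:=\ell_3/\ell_1$, so $\Q(u,v)\subseteq\Q(r_2,r_3)$. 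On the other hand, the identity $c_1^2+c_2^2+c_3^2+2c_1c_2c_3=1$ (valid because $\alpha_1+\alpha_2+\alpha_3=\pi$) can be rewritten as $2uv\,c_1^3+(1+u^2+v^2)c_1^2-1=0$, showing that $c_1$ --- and hence $c_2=uc_1$, $c_3=vc_1$ --- is algebraic over $\Q(u,v)$; and the law of sines together with $\sin^2\alpha_i=1-c_i^2$ gives $r_i^2=(1-c_i^2)/(1-c_1^2)$, so $r_2,r_3$ are algebraic over $\Q(c_1,c_2,c_3)$. Hence $\Q(r_2,r_3)$ is algebraic over $\Q(u,v)$, so these two fields have the same transcendence degree over $\Q$, and therefore $u,v$ are algebraically independent if and only if $r_2,r_3$ are, that is, \brasl{c} holds if and only if $\tau$ is generic.

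The argument should be essentially routine once phrased this way; the two points that need a little care are the homogenization lemma itself --- in particular the check, for the ``almost every'' clause, that $\sum_d k^d p_d(a_1,a_2,a_3)$ is a genuinely nonzero polynomial in $k$ whenever $p\neq0$ and the ratios are algebraically independent --- and the separate treatment of the right-angled triangles in \brasl{c}. Everything else reduces to bookkeeping with the laws of sines and cosines and with the identity $\cos^2\alpha_1+\cos^2\alpha_2+\cos^2\alpha_3+2\cos\alpha_1\cos\alpha_2\cos\alpha_3=1$.
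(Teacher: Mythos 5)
Your proof is correct, and it reaches the conclusion by a noticeably different route than the paper. The paper keeps the scaling parameter as an indeterminate $x$ and proves \brasl{s}$\Leftrightarrow$\brasl{c} symmetrically: multiplying the two identities that follow from $\alpha_1+\alpha_2+\alpha_3=\pi$ (the cubic one in the cosines and a quartic one in the sines) by $x^3$ and $x^6$ shows that $x$ is algebraic over both $\Q(x\cos\alpha_1,x\cos\alpha_2,x\cos\alpha_3)$ and $\Q(x\sin\alpha_1,x\sin\alpha_2,x\sin\alpha_3)$, and then $x^2\sin^2\alpha_i+x^2\cos^2\alpha_i=x^2$ forces the two fields to have the same transcendence degree, with no case distinctions and no use of the law of cosines. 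You instead dehomogenize: your homogenization lemma (some, hence almost every, $k$ makes $ka_1,ka_2,ka_3$ algebraically independent iff the ratios $a_2/a_1,a_3/a_1$ are) reduces everything to the two ratio fields, and you compare $\Q(\cos\alpha_2/\cos\alpha_1,\cos\alpha_3/\cos\alpha_1)$ directly with $\Q(\ell_2/\ell_1,\ell_3/\ell_1)$, using the law of cosines for one inclusion and the cubic identity (which makes $\cos\alpha_1$ algebraic over the ratio field) together with the law of sines for algebraicity in the other direction; so you link \brasl{c} to genericity rather than to \brasl{s}, and you never need the quartic sine identity. What your route buys is an explicit and reusable justification of the ``some $k$ / almost every $k$'' clause, which the paper only gestures at via a cardinality remark; what it costs is the separate treatment of right triangles, forced by the division by $\cos\alpha_i$, which you did handle correctly (both genericity and \brasl{c} fail there, via Pythagoras and $\cos\alpha_i=0$ respectively), whereas the paper's symmetric argument never divides by a cosine and needs no such case. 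Both arguments are ultimately transcendence-degree computations, and yours is complete as written; the only points demanding care are exactly the two you flag, namely the nonvanishing of $\sum_d k^d p_d(a_1,a_2,a_3)$ as a polynomial in $k$ and the degenerate right-angle case.
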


\begin{proof}
The equivalence of the condition in Definition \ref{generic/def} with property \brasl{s} immediately follows by the law of sines, while some work is needed to verify the equivalence between \brasl{s} and \brasl{c}. Once these properties are reformulated in terms of extensions of $\Q$ involving the indeterminate $x$ as above, we are reduced to proving that $\Q(x \sin \alpha_1, x \sin \alpha_2,x \sin \alpha_3)$ and $\Q(x \cos \alpha_1, x \cos \alpha_2,x \cos \alpha_3)$ have the same transcendence degree over $\Q$. By elementary trigonometry, from $\alpha_1 + \alpha_2 + \alpha_3 = \pi$ we get the equations
\begin{equation}\label{cos/eqn}
\cos^2\alpha_1 + \cos^2\alpha_2 + \cos^2\alpha_3 + 2 \cos\alpha_1 \cos\alpha_2 \cos\alpha_3 = 1\,,\end{equation}
\vspace{-18pt}
\begin{equation}\label{sin/eqn}
\begin{array}{rcl}\vrule width0pt depth6pt height18pt
\sin^4\alpha_1 + \sin^4\alpha_2 + \sin^4\alpha_3 + 4 \sin^2\alpha_1 \sin^2\alpha_2 \sin^2\alpha_3 + {} \kern20pt&&\cr {} - 2 \sin^2\alpha_1 \sin^2\alpha_2 - 2 \sin^2\alpha_1 \sin^2\alpha_3 - 2 \sin^2\alpha_2 \sin^2\alpha_3 &=& 0\,.
\vrule width0pt depth12pt height6pt\end{array}
\end{equation}
Multiplying (\ref{cos/eqn}) by $x^3$, we see that $x$ is algebraic over $\Q(x \cos \alpha_1, x \cos \alpha_2,x \cos \alpha_3)$, hence the possibly larger extension $\Q(x \cos \alpha_1, x \cos \alpha_2,x \cos \alpha_3,x)$ has the same transcendence degree as $\Q(x \cos \alpha_1, x \cos \alpha_2,x \cos \alpha_3)$ over $\Q$. Similarly, multiplying (\ref{sin/eqn}) by $x^6$, we see that $x$ is algebraic over $\Q(x \sin \alpha_1, x \sin \alpha_2,x \sin \alpha_3)$ as well, hence the transcendence degree of $\Q(x \sin \alpha_1, x \sin \alpha_2,x \sin \alpha_3,x)$ over $\Q$ is the same as that of $\Q(x \sin \alpha_1, x \sin \alpha_2,x \sin \alpha_3)$. Now, the relations $x^2\sin^2\alpha_i + x^2\cos^2\alpha_i = x^2$ allow us to conclude that $\Q(x \cos \alpha_1, x \cos \alpha_2,x \cos \alpha_3,x)$ and $\Q(x \sin \alpha_1, x \sin \alpha_2,x \sin \alpha_3,x)$ have the same transcendence degree over $\Q$.
\end{proof}

As noticed in the above proof, due to the relation $\alpha_1 + \alpha_2 + \alpha_3 = \pi$ we cannot have $k = 1$ (or any rational number) in points \brasl{s} and \brasl{c} of Proposition \ref{trig/thm}. However, if $\tau$ is a generic triangle then $\ell_1, \ell_2, \ell_3$, as well as $\sin \alpha_1, \sin \alpha_2, \sin \alpha_3$ and $\cos \alpha_1, \cos \alpha_2, \cos \alpha_3$, form linearly independent triples over the rationals, being linear independence a homogeneous condition where the factor $r > 0$ can be canceled.

\begin{lemma}\label{generic/thm}
If an algebraic relation $p(\sin\alpha_1, \sin\alpha_2, \sin\alpha_3, \cos\alpha_1, \cos\alpha_2, \cos\alpha_3) = 0$, with $p(x_1,x_2,x_3,x_4,x_5,x_6) \in \Z[x_1,x_2,x_3,x_4,x_5,x_6]$, holds for a generic triangle, then it holds for every triangle.
\end{lemma}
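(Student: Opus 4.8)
The plan is to regard the six trigonometric values of a triangle as the coordinates of a point on an algebraic surface, and to show that a generic triangle yields a \emph{generic point}, over $\Q$, of that surface. Once this is done, any $p \in \Z[x_1,\dots,x_6]$ vanishing at such a point lies in the ideal of the surface, hence vanishes at the point coming from every triangle, which is the assertion. Concretely, parametrize the similarity classes of triangles by $(\alpha_1,\alpha_2)$ in the open region $U = \{\alpha_1,\alpha_2 > 0,\ \alpha_1 + \alpha_2 < \pi\}$, with $\alpha_3 = \pi - \alpha_1 - \alpha_2$, and set $\mathbf v(\tau) = (\sin\alpha_1,\sin\alpha_2,\sin\alpha_3,\cos\alpha_1,\cos\alpha_2,\cos\alpha_3) \in \R^6$. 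In the tangent half-angle coordinates $t_j = \tan(\alpha_j/2)$, each of $\sin\alpha_1,\sin\alpha_2,\cos\alpha_1,\cos\alpha_2$ is a rational function of $t_1,t_2$ over $\Q$, and so are $\sin\alpha_3 = \sin(\alpha_1+\alpha_2)$ and $\cos\alpha_3 = -\cos(\alpha_1+\alpha_2)$. Thus $\mathbf v$ is the restriction to $U$ of a $\Q$-rational map from $\mathbb{A}^2$ to $\mathbb{A}^6$, and the Zariski closure $S \subset \mathbb{A}^6_\Q$ of $\{\mathbf v(\tau) : \tau\ \text{a triangle}\}$ --- being the closure of the image of a rational map from $\mathbb{A}^2$ --- is an irreducible $\Q$-variety with $\dim S \le 2$, with $\mathbf v(\tau) \in S$ for every $\tau$.

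The crux is the transcendence degree of $\mathbf v(\tau)$ over $\Q$ for a generic triangle $\tau$: I claim it equals $2$. The bound $\le 2$ holds for every triangle, since $\cos^2\alpha_i = 1 - \sin^2\alpha_i$, together with $\sin\alpha_3 = \sin\alpha_1\cos\alpha_2 + \cos\alpha_1\sin\alpha_2$ and $\cos\alpha_3 = \sin\alpha_1\sin\alpha_2 - \cos\alpha_1\cos\alpha_2$, makes all six numbers algebraic over $\Q(\sin\alpha_1,\sin\alpha_2)$. For the bound $\ge 2$ I use genericity through property \brasl{s} of Proposition \ref{trig/thm}: there is a real $k$ for which $k\sin\alpha_1,k\sin\alpha_2,k\sin\alpha_3$ are algebraically independent over $\Q$, so $\mathrm{trdeg}_\Q \Q(k\sin\alpha_1,k\sin\alpha_2,k\sin\alpha_3) = 3$; but, just as in the proof of Proposition \ref{trig/thm} (multiply (\ref{sin/eqn}) by $x^6$), $k$ is algebraic over that field, whence $\mathrm{trdeg}_\Q \Q(\sin\alpha_1,\sin\alpha_2,\sin\alpha_3,k) = 3$ as well; since adjoining the single element $k$ raises the transcendence degree by at most $1$, we conclude $\mathrm{trdeg}_\Q \Q(\sin\alpha_1,\sin\alpha_2,\sin\alpha_3) \ge 2$, and a fortiori $\mathrm{trdeg}_\Q \Q(\mathbf v(\tau)) \ge 2$.

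It remains to combine these facts. Since $\mathbf v(\tau) \in S$, the evaluation homomorphism $\Q[x_1,\dots,x_6] \to \R$ at $\mathbf v(\tau)$ factors through the domain $\Q[x_1,\dots,x_6]/I(S)$, so $\mathrm{trdeg}_\Q \Q(\mathbf v(\tau)) \le \dim S$; with $\dim S \le 2$ and the computation above, this forces $\dim S = \mathrm{trdeg}_\Q \Q(\mathbf v(\tau)) = 2$. Now $I(S)$ and the prime ideal $I(\mathbf v(\tau)) = \{q : q(\mathbf v(\tau)) = 0\}$ are prime ideals of $\Q[x_1,\dots,x_6]$ with $I(S) \subseteq I(\mathbf v(\tau))$ and with quotient rings of the same Krull dimension $2$; since in a finitely generated domain over a field the dimension of the quotient strictly decreases along any proper inclusion of primes, $I(S) = I(\mathbf v(\tau))$. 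Hence a polynomial $p$ with $p(\mathbf v(\tau)) = 0$ lies in $I(S)$, and therefore vanishes at $\mathbf v(\tau')$ for every triangle $\tau'$, which is precisely the claim. The one genuinely substantial step is the identity $\mathrm{trdeg}_\Q \Q(\mathbf v(\tau)) = 2$: this is where genericity enters, forcing the trigonometric-value point up to the maximal transcendence degree allowed by the single universal relation (\ref{sin/eqn}) and making it match $\dim S$. The half-angle parametrization and the comparison of nested primes of equal dimension are routine.
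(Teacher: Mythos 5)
Your argument is correct, but it takes a genuinely different route from the paper. The paper's proof stays with the side lengths: it substitutes the laws of sines and cosines into $p$, divides by $x_4^2-(\ell_1^2+\ell_2^2+\ell_3^2)^2+2(\ell_1^4+\ell_2^4+\ell_3^4)$ to isolate the single square root, squares to obtain an integral and homogeneous relation in $\ell_1,\ell_2,\ell_3$, and then invokes Definition \ref{generic/def} directly (homogeneity allowing the scaling factor $k$) to conclude that this relation is the zero polynomial, hence that the original identity holds for all triangles. You instead package the six trigonometric values as a point of $\mathbb{A}^6$, observe via the half-angle substitution that all such points sweep out an irreducible $\Q$-variety $S$ of dimension at most $2$, and show that a generic triangle produces a point of transcendence degree exactly $2$ over $\Q$ (the lower bound reusing the same device as in the proof of Proposition \ref{trig/thm}, namely that multiplying (\ref{sin/eqn}) by $k^6$ makes $k$ algebraic over $\Q(k\sin\alpha_1,k\sin\alpha_2,k\sin\alpha_3)$); the equality $I(S)=I(\mathbf v(\tau))$ then follows from the fact that a proper inclusion of primes in an affine domain strictly drops the dimension of the quotient, and membership of $p$ in $I(S)$ is exactly the conclusion. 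The comparison: the paper's elimination argument is elementary and self-contained, needing only polynomial division and squaring, whereas your approach requires standard but heavier machinery (irreducibility of closures of images of rational maps, the height--dimension formula for finitely generated domains), in exchange for a more conceptual explanation of the phenomenon --- a generic triangle realizes a generic point of the surface of all triangles, so any relation it satisfies is a relation of the whole surface. The only steps you gloss are routine: that a rational function over $\Q$ vanishing on a Euclidean-open subset of $\R^2$ vanishes identically (which is what makes $I(S)$ prime and bounds $\dim S$ by $2$), and the bookkeeping identifying the closure of the image of the open parameter region with that of the full rational map.
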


\begin{proof}
By performing in
\begin{equation}\label{p/eqn}
p(\sin\alpha_1, \sin\alpha_2, \sin\alpha_3, \cos\alpha_1, \cos\alpha_2, \cos\alpha_3) = 0
\end{equation}
the replacements 
\begin{equation}\nonumber
\sin\alpha_i = \frac{\sqrt{\vrule width0pt height10pt depth3pt
\smash{(\ell_1^2 + \ell_2^2 + \ell_3^2)^2 - 2(\ell_1^4 + \ell_2^4 + \ell_3^4)}}}{2\ell_j \ell_k}
\text{ \ and \ }
\cos\alpha_i = \frac{\ell_i^2 - \ell_j^2 - \ell_k^2}{2 \ell_j \ell_k}
\end{equation}
given by the laws of sines and cosines, we obtain
\begin{equation}\label{q/eqn}
q(\ell_1, \ell_2, \ell_3, \!\textstyle\sqrt{\vrule width0pt height10pt depth3pt \smash{(\ell_1^2 + \ell_2^2 + \ell_3^2)^2 - 2(\ell_1^4 + \ell_2^4 + \ell_3^4)}}\,) = 0\,,
\end{equation}
with $q(x_1,x_2,x_3,x_4) \in \Z[x_1,x_2,x_3,x_4]$. 

Considering $q(x_1,x_2,x_3,x_4)$ as a polynomial in the indeterminate $x_4$ with coefficients in $\Z[x_1,x_2,x_3]$ and dividing it by $x_4^2 - (x_1^2 + x_2^2 + x_3^2)^2 + 2(x_1^4 + x_2^4 + x_3^4)$, we get a binomial $a(x_1,x_2,x_3)\kern1pt x_4 + b(x_1,x_2,x_3)$ as the remainder, with $a(x_1,x_2,x_3)$ and $b(x_1,x_2,x_3)$ in $\Z[x_1,x_2,x_3]$.

Then, equation (\ref{q/eqn}) turns out to be equivalent to
\begin{equation}\nonumber
a(\ell_1, \ell_2, \ell_3)
\textstyle\sqrt{\vrule width0pt height10pt depth3pt \smash{(\ell_1^2 + \ell_2^2 + \ell_3^2)^2 - 2(\ell_1^4 + \ell_2^4 + \ell_3^4)}} = -\,b(\ell_1, \ell_2, \ell_3)\,,
\end{equation}
which squared gives the integral algebraic relation 
\begin{equation}\label{r/eqn}
a(\ell_1, \ell_2, \ell_3)^2\,[(\ell_1^2 + \ell_2^2 + \ell_3^2)^2 - 2(\ell_1^4 + \ell_2^4 + \ell_3^4)] = b(\ell_1, \ell_2, \ell_3)^2\,.
\end{equation}
Taking into account that the replacing expressions in (\ref{p/eqn}) are homogeneous of degree 0 on $\ell_1, \ell_2$ and $\ell_3$, we have that also the equation (\ref{r/eqn}) must be homogeneous.

Now, if the relation (\ref{p/eqn}) holds for the angles $\alpha_1, \alpha_2$ and $\alpha_3$ of a generic triangle, then relation (\ref{r/eqn}) holds for the lengths $\ell_1, \ell_2$ and $\ell_3$ of its edges, even if these are scaled by any factor $k > 0$ (by the homogeneity). By definition of a generic triangle, this implies that the polynomial
\begin{equation}\nonumber
a(x_1, x_2, x_3)^2\,[(x_1^2 + x_2^2 + x_3^2)^2 - 2(x_1^4 + x_2^4 + x_3^4)] - b(x_1, x_2, x_3)^2
\end{equation}
is trivial in $\Z[x_1,x_2,x_3]$. Then, equations (\ref{r/eqn}) and (\ref{q/eqn}) are 
identically satisfied and we can conclude that (\ref{p/eqn}) holds for every triangle.
\end{proof}

A deeper analysis of the algebraic dependence of $\cos\alpha_1, \cos\alpha_2$ and $\cos\alpha_3$, shows that generic triangles are typical, in the sense of the following definition (see \cite{GaStVo92}).

\begin{definition}\label{typical/def}
An Euclidean triangle $\tau$ as above is called a \textsl{typical triangle} if the real numbers $\alpha_1,\alpha_2$ and $\alpha_3$ are linearly independent over the rationals.
\end{definition}

\begin{proposition}\label{typical/thm}
Generic Euclidean triangles are typical.
\end{proposition}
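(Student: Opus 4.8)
The plan is to argue by contradiction, converting a nontrivial $\Q$-linear dependence among the angles into a collapse of a transcendence degree that genericity forbids. First I restate genericity in a convenient form. By the reformulation used in the proof of Proposition \ref{trig/thm} (and the discussion following Definition \ref{generic/def}), the triangle $\tau$ is generic if and only if $\Q(x\cos\alpha_1,x\cos\alpha_2,x\cos\alpha_3)$ has transcendence degree $3$ over $\Q$, where $x$ is an indeterminate. Multiplying (\ref{cos/eqn}) by $x^3$ exhibits $x$ as algebraic over this field, so $\Q(\cos\alpha_1,\cos\alpha_2,\cos\alpha_3,x) = \Q(x\cos\alpha_1,x\cos\alpha_2,x\cos\alpha_3,x)$ has transcendence degree $3$ as well; since $x$ is transcendental over the real field $\Q(\cos\alpha_1,\cos\alpha_2,\cos\alpha_3)$, the latter has transcendence degree exactly $2$ over $\Q$. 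Hence it suffices to prove that a triangle which is \emph{not} typical has $\Q(\cos\alpha_1,\cos\alpha_2,\cos\alpha_3)$ of transcendence degree at most $1$ over $\Q$.

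So assume $m_1\alpha_1 + m_2\alpha_2 + m_3\alpha_3 = 0$ for some integers $m_1,m_2,m_3$ not all zero. If $(m_1,m_2,m_3)$ were a multiple of $(1,1,1)$, this relation together with $\alpha_1+\alpha_2+\alpha_3 = \pi$ would force $m\pi = 0$ and hence $m_i = 0$; so it is not, and the $2\times 3$ integer matrix with rows $(1,1,1)$ and $(m_1,m_2,m_3)$ has rank $2$. Choosing a nonzero $2\times 2$ minor and using it to solve, jointly with $\alpha_1+\alpha_2+\alpha_3 = \pi$, the two linear equations for the two corresponding angles, I can write all three angles in the form $\alpha_i = r_i\phi + s_i\pi$ with $r_i,s_i \in \Q$, where $\phi$ denotes the remaining angle (so $r=1$, $s=0$ for that one).

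Finally, let $M \geq 1$ be an integer clearing the denominators of $r_1,r_2,r_3$ and set $\psi = \phi/M$, so that each $r_iM \in \Z$. From $\cos\alpha_i = \cos(r_iM\psi)\cos(s_i\pi) - \sin(r_iM\psi)\sin(s_i\pi)$, the Chebyshev relations $\cos(n\theta) \in \Z[\cos\theta]$ and $\sin(n\theta) \in \sin\theta\,\Z[\cos\theta]$ for $n \in \Z$, and the fact that $\cos(s_i\pi)$ and $\sin(s_i\pi)$ are algebraic over $\Q$ (since $s_i \in \Q$), it follows that every $\cos\alpha_i$ lies in $K(\cos\psi,\sin\psi)$ for a suitable number field $K$. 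As $\sin^2\psi = 1 - \cos^2\psi$ and $K/\Q$ is algebraic, that field has transcendence degree at most $1$ over $\Q$, and hence so does its subfield $\Q(\cos\alpha_1,\cos\alpha_2,\cos\alpha_3)$, contradicting the first paragraph. I expect the only mildly delicate point to be the bookkeeping in this last step, namely tracking the shifts $s_i\pi$ and assembling the finite extension $K$; everything else is linear algebra over $\Q$ and a transcendence-degree count.
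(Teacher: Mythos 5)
Your proof is correct, and it takes a genuinely different route from the paper's. The paper argues through Lemma \ref{generic/thm}: it first upgrades the hypothetical relation to one valid for \emph{all} triangles, deduces that any integral polynomial relation among $\cos\alpha_1,\cos\alpha_2,\cos\alpha_3$ of a generic triangle must be divisible by the basic polynomial (\ref{basic/eqn}), and then derives a contradiction from a rational linear relation $n_1\alpha_1+n_2\alpha_2+n_3\alpha_3=0$ by forming the Chebyshev relation (\ref{cheb/eqn}), specializing at $x_2=x_3=1$, and using the parity of $T_n$ to contradict coprimality of the $n_i$. You instead make a pure transcendence-degree count: genericity (via Proposition \ref{trig/thm}\brasl{c} and the relation (\ref{cos/eqn}), which makes $x$ algebraic over $\Q(x\cos\alpha_1,x\cos\alpha_2,x\cos\alpha_3)$) forces $\Q(\cos\alpha_1,\cos\alpha_2,\cos\alpha_3)$ to have transcendence degree $2$, whereas a nontrivial $\Q$-linear relation among the angles, combined with $\alpha_1+\alpha_2+\alpha_3=\pi$, lets you write each $\alpha_i$ as $r_i\phi+s_i\pi$ with $r_i,s_i\in\Q$ and hence puts all three cosines inside $K(\cos\psi,\sin\psi)$ for a number field $K$, a field of transcendence degree at most $1$ --- contradiction. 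Your linear-algebra step (the rows $(1,1,1)$ and $(m_1,m_2,m_3)$ cannot be proportional, so the system has rank $2$) and the Chebyshev/angle-addition bookkeeping are both sound, and you avoid Lemma \ref{generic/thm} and the divisibility and parity analysis entirely. What the paper's longer route buys is the stronger structural fact that (\ref{basic/eqn}) generates all integral relations among the cosines of a generic triangle, which has independent interest; what yours buys is brevity and a clean quantitative dichotomy (non-typical triangles have cosine field of transcendence degree at most $1$, generic ones exactly $2$), at the cost of leaning a bit more heavily on the field-theoretic reformulation of genericity already implicit in Proposition \ref{trig/thm}.
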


\begin{proof}
Let $\tau$ a generic triangle. We want to prove that equation (\ref{cos/eqn}) is essentially the only algebraic relation between the cosines of the interior angles of $\tau$, being any polynomial $p(x_1,x_2,x_3) \in \Z[x_1,x_2,x_3]$ such that $p(\cos\alpha_1,\cos\alpha_2,\cos\alpha_3) = 0$ divisible by
\begin{equation}\label{basic/eqn}
\vrule width0pt depth12pt
x_1^2 + x_2^2 + x_3^2 + 2x_1x_2x_3 - 1\,.
\end{equation}

Assume that the identity $p(\cos\alpha_1, \cos\alpha_2, \cos\alpha_3) = 0$ holds for $\tau$.
Then, according to Lemma \ref{generic/thm}, it must hold for any triangle, and by using once again the relation $\alpha_1 + \alpha_2 + \alpha_3 = \pi$ we have $p(-\cos(\alpha_2 + \alpha_3),\cos\alpha_2, \cos\alpha_3) = 0$ for every $\alpha_2,\alpha_3 \geq 0$\break such that $\alpha_2 + \alpha_3 < \pi$. Therefore, $p(x_1,x_2,x_3)$ is divisible by both the linear binomials\break $x_1 + x_2x_3 \pm \sqrt{(1-x_2^2)(1-x_3^2)}$ in the indeterminate $x_1$ with coefficients in the quadratic closure of the field of fractions $\Q(x_2,x_3)$, hence it is divisible by $x_1^2 + x_2^2 + x_3^2 + 2x_1x_2x_3 - 1$ in $\Z[x_1,x_2,x_3]$ (notice that the last polynomial is monic with respect to $x_1$).

Now, by contradiction, let $n_1 \alpha_1 + n_2 \alpha_2 + n_3 \alpha_3 = 0$ be a non-trivial vanish\-ing linear combination of the interior angles $\alpha_1, \alpha_2$ and $\alpha_3$ of a generic triangle, with integral coefficients $n_1,n_2$ and $n_3$, which
can be assumed coprime without loss of generality. By elementary trigonometry, we have
\begin{equation}\nonumber
\cos (n_1 \alpha_1) - \cos (n_2 \alpha_2) \cos (n_3 \alpha_3) = \sin (n_2 \alpha_2)\sin (n_3 \alpha_3)\,.\end{equation} 
Squaring and using the Pythagorean identity, we readily obtain 
\begin{equation}\nonumber
T^2_{n_1\!}(\cos\alpha_1) + T^2_{n_2\!}(\cos\alpha_2) + T^2_{n_3\!}(\cos\alpha_3) - 2\,T_{n_1\!}(\cos\alpha_1) T_{n_2\!}(\cos\alpha_2) T_{n_3\!}(\cos\alpha_3) = 1\,,
\end{equation}\nonumber
where $T_n(x) \in \Z[x]$ denotes the Chebyshev polynomials defined by the identity $T_n(\cos \alpha) = \cos (n\alpha)$.
By the above, the polynomial 
\begin{equation}\label{cheb/eqn}
T^2_{n_1\!}(x_1) + T^2_{n_2\!}(x_2) + T^2_{n_3\!}(x_3) - 2\,T_{n_1\!}(x_1)T_{n_2\!}(x_2)T_{n_3\!}(x_3) - 1
\end{equation}
must be divisible by (\ref{basic/eqn}). Since $T_n(1) = 1$ for all $n$, setting $x_2 = x_3 = 1$ both in (\ref{basic/eqn}) and (\ref{cheb/eqn}),  we get $(x_1+1)^2=0$ and $(T_{n_1}(x_1)-1)^2=0$, respectively. Thus, $T_{n_1}(x_1) - 1$ must be divisible by $x_1 + 1$ in $\Z[x_1]$. This implies that $n_1$ is even, because $T_n(x)$ has same parity of $n$. By the symmetry of (\ref{basic/eqn}) and (\ref{cheb/eqn}), the same argument shows that $n_2$ and $n_3$ must be even as well, contradicting the coprimality assumption.
\end{proof}

In the light of Proposition \ref{typical/thm}, generic triangles can be somewhat thought of as the opposite end in the spectrum of all Euclidean triangles with respect to the rational ones.



\section{The triangle group\label{reflections/sec}}

For any Euclidean triangle $\tau$ we denote by $G_\tau = \langle r_1, r_2, r_3 \rangle \subset \E(2)$ the subgroup of the group $\E(2)$ of the Euclidean isometries of the plane generated by the reflections $r_1, r_2$ and $r_3$ across the edges $e_1, e_2$ and $e_3$ of $\tau$, respectively.
We call $G_\tau$ the \textsl{triangle group} of $\tau$.

The standard exact sequence 
\begin{equation}\nonumber
1 \longrightarrow \R^2 \buildrel \iota \over \longrightarrow \E(2) \buildrel \lambda \over \longrightarrow \O(2) \longrightarrow 1\,,
\end{equation}
where $\iota$ is the inclusion of $\R^2$ in $\E(2)$ as the subgroup of translations and $\lambda$ is the linearization homomorphism, induces by restriction the exact sequence 
\begin{equation}\label{Gt-exact/eqn} 
1 \longrightarrow T_\tau \buildrel \iota_\tau \over \longrightarrow G_\tau \buildrel \lambda_\tau \over \longrightarrow S_\tau \longrightarrow 1\,,
\end{equation} 
where $T_\tau \subset G_\tau$ is the \textsl{translation subgroup} consisting of all translations in $G_\tau$, while $S_\tau = \lambda(G_\tau) = \langle s_1, s_2, s_3 \rangle \subset \O(2)$ with $s_i = \lambda(r_i)$ the linearization of $r_i$.

We observe that the structure of the group $G_\tau$ (including the latter exact sequence) is invariant under similarities. Hence, without loss of generality we can assume that the incircle of the triangle $\tau$ is coincides with the unit circle centered at the origin. Under this assumption, we have 
\begin{equation}\label{ri/eqn}
(x)r_i = (x)s_i + 2v_i
\end{equation} 
for every $x \in \R^2$ (here and in the following we use the right notation for the action of $G_\tau$), where $v_i$ is the unit vector from the origin to the tangency point of the edge $e_i$ and the incircle of $\tau$, as shown in Figure \ref{figure1/fig}.

\begin{Figure}[htb]{figure1/fig}
\fig{}{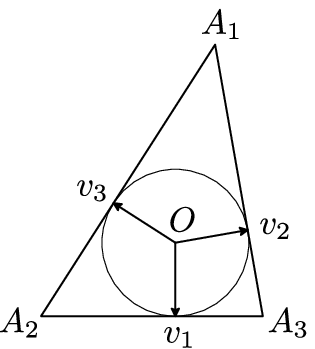}[-3pt]
    {}{The unit vectors $v_1,v_2$ and $v_3$.}
\end{Figure}

We want to determine a minimal presentation of the group $S_\tau$ in the case when $\tau$ is a typical, and hence $S_\tau$ is a dense subgroup of $\O(2)$. In order to do that, we first recall from \cite{GaStVo92} the definition of a stable sequence and the stability criterion for a product of generators of $S_\tau$ to be trivial.

\begin{definition}\label{stable/def}
A sequence $i_1i_2 \dots i_n$ of symbols from $\{1,2, \dots, N\}$ is called a \textsl{stable sequence} if its terms can be paired into disjoint pairs of identical symbols, one located at an odd and the other at an even position. Differently said, the length $n$ of the sequence is even and the symbolic alternating sum $i_1 - i_2 + \dots + i_{n-1} - i_n$ vanish (as an algebraic sum of symbols, not of integers).
\end{definition}

The motivation for the term ``stable'' is that a stable sequence as above represents the sequence of sides of a polygonal billiard (whose $N$ sides are arbitrarily numbered) visited by a $n$-periodic trajectory, which is stable in the sense that it survives to small perturbations of the polygon (see \cite{GaStVo92}). Actually, also Lemma \ref{stable/thm} below and its proof are essentially translated from the context of stable trajectories in polygonal billiards, focusing on the case $N = 3$.

Before going on, let us give an operational characterization of stability.

\begin{lemma}\label{stable-red/thm}
A sequence $i_1i_2 \dots i_n$ is stable if and only if one can reduce it to the empty sequence by a finite number of operation of the following types: 
\begin{enumerate}
\item[\brasl{a}] transposition of two adjacent subsequences both consisting of two symbols;
\item[\brasl{b}] deletion of a subsequence consisting of two identical symbols.
\end{enumerate}
\end{lemma}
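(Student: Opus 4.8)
\emph{Proof plan.} The idea is to run two short inductions, one per implication, after first recording how operations \brasl{a} and \brasl{b} interact with the defining condition of stability. For a sequence $s$, let $\mathrm{odd}(s)$ and $\mathrm{even}(s)$ denote the multisets of terms of $s$ occupying odd, respectively even, positions; by Definition \ref{stable/def} the sequence $s$ is stable precisely when it has even length and $\mathrm{odd}(s) = \mathrm{even}(s)$. I would first check the routine fact that operation \brasl{a}, which turns a four-term block $abcd$ into $cdab$, shifts each of those four terms by two positions and fixes all the others, hence preserves $\mathrm{odd}(s)$, $\mathrm{even}(s)$ and the parity of the length; and that operation \brasl{b}, which deletes a block $aa$ occupying two consecutive (hence opposite-parity) positions and shifts all later terms by two, removes exactly one copy of $a$ from each of $\mathrm{odd}(s)$ and $\mathrm{even}(s)$ and again preserves length parity. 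Consequently each operation sends stable sequences to stable sequences, and in fact $s$ is stable if and only if its image under either operation is.

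For the implication "reducible $\Rightarrow$ stable" I would induct on the number of operations used: the empty sequence is stable, and running the reduction backwards one operation at a time preserves stability by the remark above.

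For "stable $\Rightarrow$ reducible" I would induct on the length $n$, the case $n = 0$ being vacuous. For $n = 2m > 0$, group $s$ into $m$ consecutive \emph{dominoes} $D_1\dots D_m$ with $D_k = (i_{2k-1}, i_{2k})$. Swapping two adjacent dominoes is an instance of operation \brasl{a}, so by iterating such swaps I may permute the dominoes arbitrarily. Since $\mathrm{odd}(s)$ is the multiset of first entries of the dominoes and $\mathrm{even}(s)$ the multiset of their second entries, stability says these two multisets agree. If some domino equals $(a,a)$, it is already a block of two identical symbols and can be removed by operation \brasl{b}. Otherwise, take $a = i_1$, the first entry of $D_1$; since $a$ occurs among the first entries, it occurs among the second entries, so $D_j = (\,\cdot\,,a)$ for some $j$, with $D_j \neq D_1$ because $D_1 \neq (a,a)$. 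Permuting the dominoes so that $D_j$ sits immediately before $D_1$ produces a block $aa$ straddling their common boundary, which operation \brasl{b} deletes. In either case the outcome is a stable sequence of length $n-2$, which by the inductive hypothesis reduces to the empty sequence; prepending the operations just performed completes the step.

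The substantive point — the only place the argument is not pure bookkeeping — is the combinatorial claim in the last paragraph that a non-empty stable sequence can always be brought, by a domino permutation, into a form containing two adjacent equal symbols. As indicated, this follows at once from the equality of the two multisets attached to the dominoes, so I do not expect a genuine obstacle here, only the need to carry out the positional and parity bookkeeping of the first paragraph precisely, since the whole proof rests on operations \brasl{a} and \brasl{b} preserving stability.
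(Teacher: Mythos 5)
Your proof is correct and follows essentially the same route as the paper: both establish the ``if'' direction by noting that operations \brasl{a} and \brasl{b} preserve the parity of each term's position (equivalently, your multiset formulation), and both prove the ``only if'' direction by induction on the length, locating a partner of a symbol in the first pair among the other pairs, bringing that pair adjacent by transpositions of pairs, and deleting the resulting block of two identical symbols. The only difference is cosmetic (you match the first entry $i_1$ to a second entry of a later domino, while the paper matches $i_2$ to a first entry $i_{2k-1}$), so there is nothing to fix.
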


\begin{proof}
First of all, we note that both operations and the inverse of the second one, that is the insertion of two adjacent identical symbols in a sequence, all preserve the parity of the position of each term in the sequence, hence they preserve stability. This immediately gives the ``if'' part of the statement, being the empty sequence stable.

The ``only if'' part can be proved by induction on the length of the sequence, starting  once again from the empty sequence. For the inductive step, assume we are given any non-empty stable sequence $i_1i_2 \dots i_n$. The stability implies that $i_{2k-1} = i_2$ for some\break $1 \leq k \leq n/2$. If $k = 1$, we can reduce the length of the sequence by deleting the subsequence $i_1i_2$. Otherwise, by $k - 2$ transpositions of pairs, we get a sequence starting with the four symbols $i_1i_2i_{2k+1}i_{2k+2}$, then we can reduce the length of the word by deleting the subsequence $i_2i_{2k+1}$.
\end{proof}

\begin{lemma}\label{stable/thm}
If a sequence $i_1i_2 \dots i_n$ of symbols from $\{1,2,3\}$ is stable, then the product $s_{i_1}s_{i_2} \dots s_{i_n}$ is the identity in $S_\tau$. Moreover, for a typical triangle $\tau$ the stability of the sequence $i_1i_2 \dots i_n$ is also necessary in order  $s_{i_1}s_{i_2} \dots s_{i_n}$ to be the identity. 
\end{lemma}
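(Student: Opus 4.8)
The statement splits into \emph{sufficiency} (a stable sequence always yields the trivial product, for every $\tau$) and \emph{necessity} (for a typical $\tau$, only stable sequences yield the trivial product). Both rest on the same elementary description of $S_\tau\subset\O(2)$: since $s_i=\lambda(r_i)$ is the reflection of the plane across the line through the origin parallel to the edge $e_i$, write $\theta_i\in\R/\pi\Z$ for the direction of that line. Then each $s_i$ is a linear reflection, and the product of any two generators is a rotation, $s_is_j=R_{2(\theta_i-\theta_j)}\in\SO(2)$, where $R_\varphi$ denotes the rotation by $\varphi$. In particular $\det(s_{i_1}\cdots s_{i_n})=(-1)^n$, so $s_{i_1}\cdots s_{i_n}=\id$ already forces $n$ to be even.

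For sufficiency I would invoke Lemma~\ref{stable-red/thm}: reduce the stable sequence to the empty one by the moves \brasl{a} and \brasl{b}, and check that neither move changes the value of $s_{i_1}\cdots s_{i_n}$. Move \brasl{b} removes a factor $s_{i_l}s_{i_l}=\id$, so it is harmless because reflections are involutions; move \brasl{a} replaces a block $s_as_bs_cs_d$ by $s_cs_ds_as_b$, and $s_as_b$, $s_cs_d$ commute because they lie in the abelian group $\SO(2)$. Hence $s_{i_1}\cdots s_{i_n}$ equals the empty product, namely $\id$.

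For necessity, suppose $\tau$ is typical and $s_{i_1}\cdots s_{i_n}=\id$, so $n=2m$. Grouping consecutive pairs and using $R_\varphi R_\psi=R_{\varphi+\psi}$, one gets $s_{i_1}\cdots s_{i_{2m}}=R_{2\sigma}$ with $\sigma=\sum_{l=1}^{2m}(-1)^{l-1}\theta_{i_l}$, hence $2\sigma\equiv 0\pmod{2\pi}$, i.e.\ $\sigma\equiv 0\pmod{\pi}$. Sorting the terms by symbol, $\sigma=c_1\theta_1+c_2\theta_2+c_3\theta_3$, where $c_j$ is the number of occurrences of $j$ in an odd position minus the number in an even position; since $n$ is even, $c_1+c_2+c_3=0$, so $\sigma=c_1(\theta_1-\theta_3)+c_2(\theta_2-\theta_3)$. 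Now the differences of the edge directions are the exterior angles of $\tau$: with a fixed orientation one has, modulo $\pi$, $\theta_1-\theta_3\equiv-\alpha_2$ and $\theta_2-\theta_3\equiv\alpha_1$, so $\sigma\equiv c_2\alpha_1-c_1\alpha_2\pmod{\pi}$. Thus $c_2\alpha_1-c_1\alpha_2=k\pi$ for some $k\in\Z$, and substituting $\pi=\alpha_1+\alpha_2+\alpha_3$ yields the integral relation $(c_2-k)\alpha_1-(c_1+k)\alpha_2-k\alpha_3=0$. Since $\tau$ is typical, $\alpha_1,\alpha_2,\alpha_3$ are linearly independent over $\Q$ (Definition~\ref{typical/def}), forcing $k=0$ and $c_1=c_2=0$, hence also $c_3=0$. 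Finally, $c_j=0$ for all $j$ means each symbol $j$ appears equally often in odd and in even positions, so one can match its occurrences one-to-one, odd with even; the union of these matchings over $j=1,2,3$ exhibits $i_1\cdots i_n$ as a stable sequence.

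I expect no real conceptual obstacle: the whole content is the translation of the billiard stability criterion into the single algebraic fact that $s_{i_1}\cdots s_{i_{2m}}$ is the rotation by twice the alternating sum $\sigma$ of the edge directions. The two points to handle with mild care are the distinction between ``$\equiv 0$ modulo $\pi$'' (which is what triviality of a rotation amounts to here) and ``modulo $2\pi$'', and the choice of a consistent orientation fixing the signs in $\theta_i-\theta_j\equiv\pm\alpha_k\pmod{\pi}$; as the computation shows, both are harmless. The genuine input is typicality, which is exactly what upgrades ``$c_2\alpha_1-c_1\alpha_2\in\pi\Z$'' to the rigid conclusion $c_1=c_2=c_3=0$ --- without it the product can be trivial for non-stable sequences, which is the phenomenon studied in Section~\ref{examples/sec}.
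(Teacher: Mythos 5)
Your proof is correct and follows essentially the same route as the paper: the product of an even number of generators is the rotation by twice an alternating sum of edge directions, and for a typical triangle the vanishing of that angle modulo $2\pi$ yields, via $\pi=\alpha_1+\alpha_2+\alpha_3$ and the rational independence of the $\alpha_i$, that all the signed multiplicities $c_j$ vanish, which is exactly stability. The only (harmless) variation is in the easy direction, where you deduce sufficiency from Lemma~\ref{stable-red/thm} (both moves preserve the product, since $s_i^2=\id$ and $\SO(2)$ is abelian) instead of observing directly that stability makes the alternating angle sum vanish; the paper itself uses precisely that reduction-style argument later, in the proof of Proposition~\ref{S-tau/thm}.
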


\begin{proof} We proceed in the same spirit as in \cite[Sec. 6.B]{GaStVo92}. 
We first orient the egdes $e_1,e_2$ and $e_3$ in the counterclockwise way along the boundary of the triangle $\tau$, and denote by $\beta_i$ the oriented angle from $e_1$ (fixed as reference vector) to $e_i$. Then, we have $\beta_1 = 0$, $\beta_2 = \pi - \alpha_3$ and $\beta_3 = \pi + \alpha_2$ mod $2\pi$. Moreover, any composition $s_js_k$ gives the linear rotation of angle $2(\beta_k - \beta_j)$ mod $2\pi$, and hence any product $s_{i_1}s_{i_2} \dots s_{i_n}$ with $n$ even gives the linear rotation of angle 
\begin{equation}\label{phi/eqn}
\phi = 2(\beta_{i_2}-\beta_{i_1})+ \cdots + 2(\beta_{i_n}-\beta_{i_{n-1}}) \text{ mod }2\pi\,.
\end{equation}

Now, the stability of the sequence $i_1i_2\dots i_n$ implies that $\phi = 0$ mod $2\pi$ and thus $s_{i_1}s_{i_2} \dots s_{i_n}$ is the identity in $S_\tau$.

In the opposite direction, start with a product $s_{i_1}s_{i_2} \dots s_{i_n}$ that gives the identity. Then, $n$ must be even and (\ref{phi/eqn}) can be rewritten in terms of the $\alpha_i$'s by using the above identities. If the sequence $i_1i_2\dots i_n$ is not stable, this yields a non-trivial rational linear relation among the angles $\alpha_2, \alpha_3$ and $\pi$, which implies that $\tau$ is not typical. 
\end{proof}

At this point, we are in position to obtain the wanted presentation of $S_\tau$.

\begin{proposition}\label{S-tau/thm}
For a typical triangle $\tau$ the group $S_\tau$ admits the finite presentation $$\langle x_1,x_2,x_3 \,|\, x_1^2, x_2^2, x_3^2, (x_1x_2x_3)^2\,\rangle\,,$$ with the symbols $x_1,x_2,x_3$ corresponding to $s_1,s_2,s_3$, respectively.\end{proposition}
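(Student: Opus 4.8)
The plan is to identify the abstract group $\Gamma=\langle x_1,x_2,x_3\mid x_1^2,x_2^2,x_3^2,(x_1x_2x_3)^2\rangle$ with $S_\tau$ via the tautological assignment $x_i\mapsto s_i$. This is well defined: each $s_i=\lambda(r_i)$ is a planar reflection, so $s_i^2=1$, and $s_1s_2s_3$, being a product of three reflections in $\O(2)$, is orientation reversing, hence itself a reflection, so $(s_1s_2s_3)^2=1$ (equivalently, the sequence $123123$ is stable, so this follows from Lemma \ref{stable/thm}). The resulting homomorphism $\pi\colon\Gamma\to S_\tau$ is onto because the $s_i$ generate $S_\tau$, so the whole matter reduces to proving $\pi$ injective.

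The structural input I would establish first is that the index-$2$ subgroup $\Gamma^+\le\Gamma$ of even-length words is abelian. Since all defining relators have even length, the length-parity map $\Gamma\to\Z/2$ killing every $x_i$ is well defined, with kernel $\Gamma^+$; and $\Gamma^+$ is generated by $u=x_1x_2$ and $v=x_1x_3$, because any block $x_ax_b$ equals $(x_1x_a)^{-1}(x_1x_b)$, a word in $u^{\pm1},v^{\pm1}$. The one genuine computation is that $u$ and $v$ commute: rewriting $(x_1x_2x_3)^2=1$ as $x_1x_2x_3=x_3x_2x_1$ and using $x_i^2=1$,
\[
(x_3x_1)\,u\,(x_3x_1)^{-1}=x_3x_1\cdot x_1x_2\cdot x_1x_3=x_3x_2x_1x_3=(x_1x_2x_3)x_3=x_1x_2=u,
\]
so $v=(x_3x_1)^{-1}$ centralizes $u$, and $\Gamma^+=\langle u,v\rangle$ is abelian. (In passing this exhibits $\Gamma$ as an extension of $\Z/2$ by the abelian group $\Gamma^+$ on which it acts by inversion, matching the shape of $S_\tau$ — abelian rotation part, reflections inverting it — but I would not need this explicitly.)

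Granting $\Gamma^+$ abelian, injectivity follows from the billiard lemmas. Let $w\in\ker\pi$ and write $w=x_{i_1}\cdots x_{i_n}$; then $s_{i_1}\cdots s_{i_n}$ is the identity in $S_\tau$, so — and this is the only place the typicality of $\tau$ is used — Lemma \ref{stable/thm} forces the sequence $i_1\cdots i_n$ to be stable, and by Lemma \ref{stable-red/thm} it reduces to the empty sequence through finitely many moves of types (a) and (b). Move (b) deletes a block $x_ix_i$, and move (a) transposes two adjacent length-$2$ blocks, each of which lies in $\Gamma^+$; since $x_i^2=1$ and $\Gamma^+$ is abelian, neither move alters the element of $\Gamma$ that the word represents. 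Hence $w$ equals the empty product, i.e. $w=1$ in $\Gamma$, so $\pi$ is an isomorphism.

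The main obstacle is just the commutation identity $uv=vu$: it is short once the relator $(x_1x_2x_3)^2$ is rewritten, but it is the sole point where that relator does any work. Everything else is the observation that operations (a) and (b) of Lemma \ref{stable-red/thm} encode precisely "$\Gamma^+$ abelian" together with "$x_i^2=1$", and that typicality enters only through the converse half of Lemma \ref{stable/thm}.
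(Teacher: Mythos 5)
Your proposal is correct and takes essentially the same route as the paper: both directions rest on Lemma \ref{stable/thm} (stability is sufficient, and for a typical triangle also necessary, for a product of the $s_i$ to be trivial) and Lemma \ref{stable-red/thm}, with the crux being that the reduction moves are consequences of the four relators. The only difference is organizational: where the paper verifies the three non-trivial commutators $[x_ix_j,x_kx_l]$ directly as conjugates of $(x_1x_2x_3)^{\mp2}$, you package the same fact as the abelianness of the even-length subgroup $\langle x_1x_2,x_1x_3\rangle$ via a single conjugation computation.
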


\begin{proof}
According to Lemma \ref{stable/thm}, all the four relations of the presentation hold in $S_\tau$, being the corresponding sequences of indices stable.

Viceversa, Lemmas \ref{stable-red/thm} and \ref{stable/thm} say that any word $x_{i_1}x_{i_2}\dots x_{i_n}$ representing the identity in $S_\tau$ can be reduced to the empty word by canceling squared terms $x_i^2$ and commuting products $x_ix_j$ and $x_kx_l$. So, to conclude the proof it is enough to show that any commutator $[x_ix_j,x_kx_l]$ is the identity modulo the given four relations. Up to inversions, the only non-trivial cases are $[x_1x_2,x_1x_3]\,, [x_1x_2,x_2x_3]$ and $[x_1x_3,x_2x_3]$. For these we have: $[x_1x_2,x_1x_3] = x_1x_3(x_1x_2x_3)^{-2}x_3x_1$ and $[x_1x_2,x_2x_3] = [x_1x_3,x_2x_3] = x_1(x_1x_2x_3)^{-2}x_1$.\end{proof}



\section{The translation subgroup\label{translations/sec}}

Due to Lemma \ref{stable/thm} and the exact sequence (\ref{Gt-exact/eqn}), for any Euclidean triangle $\tau$ the product $r_{i_1}r_{i_2} \dots r_{i_n}$ gives a translation in $T_\tau$ if the sequence $i_1i_2 \dots i_n$ is stable. On the other hand, when the triangle $\tau$ is typical we obtain in this way all the translations in $T_\tau$, and it is clear from Proposition \ref{S-tau/thm} that a special role is played by the minimal stable product $t_1 = (r_1r_2r_3)^2$, coming from the code-word of the Fagnano trajectory, the simplest stable periodic trajectory in any acute triangle (see \cite{GaStVo92}). 

We denote the conjugation class of $t_1$ in $G_\tau$ by
\begin{equation}\nonumber
C(t_1) = \{\(t_1\)g = g^{-1}t_1g \,|\, g \in G_\tau\} \subset T_\tau\,.
\end{equation}

\begin{Figure}[b]{figure2/fig}
\vskip12pt
\fig{}{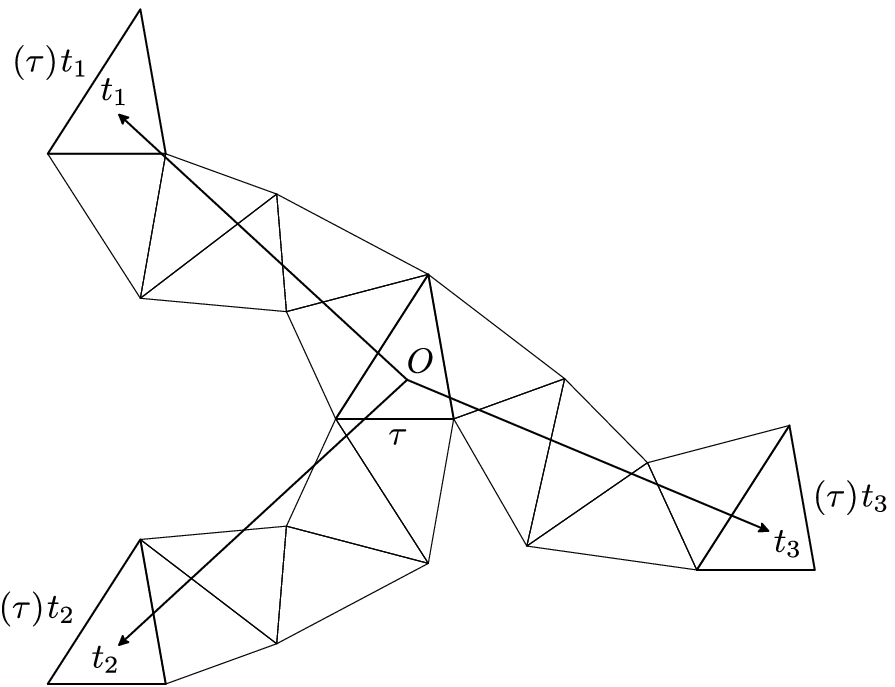}
    {}{The translations $t_1,t_2$ and $t_3$}
\end{Figure}

By identifying translations in $T_\tau$ with the corresponding vectors in $\R^2$ and considering the natural action of $S_\tau \subset \O(2)$ on them, for $g = r_{i_1}r_{i_2}\dots r_{i_n} \in G_\tau$ the conjugate $\(t_1\)g$ is given by
\begin{equation}\label{conjugate/eqn}
\(t_1\)r_{i_1}r_{i_2}\dots r_{i_n} = \(t_1\)s_{i_1}s_{i_2}\dots s_{i_n}\,,
\end{equation}
according to equation (\ref{ri/eqn}).
In the case when $\tau$ is of a typical triangle, the density of the subgroup $S_\tau \subset \O(2)$ implies that $C(t_1)$ forms a dense subset of the circle $\rho S^1 \subset R^2$ of radius 
\begin{equation}\nonumber
\rho = \|t_1\| = 4(\sin \alpha_1 + \sin \alpha_2 + \sin \alpha_3)\,.
\end{equation}

The translation $t_1$ and its conjugates $t_2 = (r_2r_3r_1)^2 = \(t_1\)r_1$ and $t_3 = (r_3r_1r_2)^2 = \(t_1\)r_3$, are represented in Figure \ref{figure2/fig} (here we assume the same numbering as in the previous Figure \ref{figure1/fig} for the edges of the triangle $\tau$).

\begin{theorem}\label{T-tau/thm}
For a typical triangle $\tau$ the translation subgroup $T_\tau \subset G_\tau$ is normally generated by the translation $t_1 = (r_1r_2r_3)^2$. Moreover, if $\tau$ is generic then $T_\tau$ is a free abelian group having as a basis the conjugation class $C(t_1)$.
\end{theorem}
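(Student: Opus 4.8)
The plan is to prove the two assertions in sequence. The first — that $T_\tau$ is normally generated by $t_1$ — follows almost immediately from what is already available: by Lemma \ref{stable/thm} and the exact sequence \eqref{Gt-exact/eqn}, for a typical triangle every element of $T_\tau$ is a product $r_{i_1}\cdots r_{i_n}$ with $i_1\cdots i_n$ stable. Using Lemma \ref{stable-red/thm}, reduce such a word by operations \brasl{a} and \brasl{b}. The key observation is that in $G_\tau$, where the relations $r_i^2=1$ hold but commutations do not, performing a transposition of two adjacent length-two blocks $ab\,cd \mapsto cd\,ab$ amounts, up to conjugation, to inserting a commutator of two reflections. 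More precisely, I would show by induction on $n$ that any stable product $r_{i_1}\cdots r_{i_n}$ can be written as a product of conjugates (in $G_\tau$) of $t_1^{\pm1}$; the base case is the empty word, and the inductive step mimics the proof of Lemma \ref{stable-red/thm}, observing that the "defect" introduced when we commute two length-two blocks past each other is exactly a $G_\tau$-conjugate of the element $(r_as_b\cdots)$-type relator, which in $S_\tau$ is trivial by Proposition \ref{S-tau/thm} and hence in $G_\tau$ is a translation lying in the normal closure of $t_1$ (since modulo that normal closure, $G_\tau$ maps onto $S_\tau$, whose only stable relators reduce via Proposition \ref{S-tau/thm}'s commutator computation to conjugates of $(r_1r_2r_3)^2=t_1$). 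So $T_\tau=\langle\!\langle t_1\rangle\!\rangle^{G_\tau}$.

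For the second and harder assertion, suppose now $\tau$ is generic. Since $T_\tau$ is abelian (it is a subgroup of the translation group $\R^2$, hence in particular a subgroup of the abelianization), normal generation by $t_1$ means $T_\tau$ is generated as an abelian group by the conjugacy class $C(t_1)=\{(t_1)g : g\in G_\tau\}$. By \eqref{conjugate/eqn}, $(t_1)g = (t_1)\lambda_\tau(g)$, so as a subset of $\R^2$ we have $C(t_1) = \{(t_1)s : s\in S_\tau\}$, all vectors of norm $\rho$ on the circle $\rho S^1$. Thus $T_\tau$ is the subgroup of $\R^2$ generated by these vectors. To prove $C(t_1)$ is a \emph{basis} of the free abelian group $T_\tau$, it suffices to show: any finite $\Z$-linear relation $\sum_{j=1}^{m} n_j\,(t_1)s^{(j)} = 0$ among distinct elements $s^{(1)},\dots,s^{(m)}$ of $S_\tau$ forces all $n_j=0$. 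This is where genericity enters. Writing $t_1$ explicitly using \eqref{ri/eqn} and the incircle normalization, $t_1 = 2(v_1 s_2 s_3 + \cdots)$ or, more usefully, compute the coordinates of $(t_1)s$ for $s = s_{k_1}\cdots s_{k_\ell}\in S_\tau$ (a rotation or reflection in $\O(2)$) as trigonometric polynomials in $\alpha_1,\alpha_2,\alpha_3$ — the entries of $t_1$ and of each $s$ lie in $\Z[\sin\alpha_i,\cos\alpha_i]$. A vanishing $\Z$-linear combination then becomes an algebraic (indeed polynomial) relation $P(\sin\alpha_1,\sin\alpha_2,\sin\alpha_3,\cos\alpha_1,\cos\alpha_2,\cos\alpha_3)=0$ with integer coefficients.

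The finishing move invokes Lemma \ref{generic/thm}: a polynomial relation in the sines and cosines of the angles that holds for a generic triangle holds for \emph{every} triangle. So the relation would have to hold identically. To reach a contradiction, I would specialize: the distinct group elements $s^{(j)}\in S_\tau$ are distinct rotations (possibly one extra reflection component), so they correspond to distinct rotation angles $\theta_j = 2(\beta_{k_2}-\beta_{k_1}+\cdots)$ expressed mod $2\pi$ in terms of $\alpha_2,\alpha_3$; by Proposition \ref{typical/thm} (generic implies typical) these angles are genuinely distinct \emph{functions} of $(\alpha_2,\alpha_3)$, not merely distinct at one point. Then $\sum n_j\,(t_1)s^{(j)} = \rho\sum n_j (\cos(\psi+\theta_j),\sin(\psi+\theta_j))$ for an appropriate phase $\psi$, and the vanishing of this for all admissible $(\alpha_2,\alpha_3)$ forces, by the linear independence of the functions $e^{i\theta_j}$ (distinct characters / distinct real-analytic functions on an open set), every $n_j=0$. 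The main obstacle — the part requiring genuine care rather than bookkeeping — is precisely this last linear-independence step: one must verify that distinct elements of $S_\tau$ really do give $\R$-linearly-independent vector-valued functions $(\alpha_2,\alpha_3)\mapsto (t_1)s$ on an open region of parameter space, which comes down to showing the rotation angles $\theta_j$ are distinct modulo $2\pi$ as functions (using that a non-typical coincidence $\theta_j\equiv\theta_{j'}$ would be a rational relation among $\alpha_2,\alpha_3,\pi$, excluded by typicality) and that no two differ by a constant forcing cancellation — which the genericity/typicality of the angle relations, via Lemma \ref{generic/thm} and Proposition \ref{typical/thm}, rules out.
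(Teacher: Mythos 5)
Your proposal follows essentially the same route as the paper: for normal generation you reduce a stable word via the operations of Lemma \ref{stable-red/thm}, noting (as in the proof of Proposition \ref{S-tau/thm}) that each commutation defect is a conjugate of $t_1^{\pm1}$ modulo the $r_i^2$; and for the generic case you turn a vanishing $\Z$-linear combination of distinct conjugates into integral trigonometric identities in $\alpha_2,\alpha_3$, promote them to identities valid for all triangles by Lemma \ref{generic/thm}, and finish by linear independence of the distinct characters $e^{i(m\alpha_2+n\alpha_3)}$, exactly as the paper does (your complex-character phrasing versus the paper's cosine/sine pair with sign normalization is an immaterial difference). The argument is correct, up to minor bookkeeping you gloss over (indexing the relation by distinct conjugate vectors rather than distinct elements of $S_\tau$, and reducing to even-length conjugating words via $\(t_1\)r_1r_2r_3=t_1$), which the paper handles explicitly.
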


\begin{proof}
Given any $t \in T_\tau$ with $\tau$ a typical triangle, we can express it as a product $r_{i_1} r_{i_2} \dots r_{i_n}$ of generators of $G_\tau$. In view of the exact sequence (\ref{Gt-exact/eqn}), the corresponding product $s_{i_1} s_{i_2} \dots s_{i_n}$ gives the identity in $S_\tau$, hence the sequence $i_1 i_2 \dots i_n$ is stable by Lemma \ref{stable/thm}. Then, arguing as in the proof of Proposition \ref{S-tau/thm}, we can rewrite $r_{i_1} r_{i_2} \dots r_{i_n}$ as a product of conjugates of $r_1^2, r_2^2,r_3^2$ and $(r_1r_2r_3)^2$. Since the $r_i^2$'s are trivial in $G_\tau$, we can conclude that $t$ is a product of conjugates of $t_1$, which gives the first part of the theorem.

Now, assume that $\tau$ is a generic triangle. We have to show that $C(t_1)$ is linearly independent over $\Z$, that is the only vanishing linear combination of pairwise distinct elements of $C(t_1)$ with integral coefficients is the trivial one.

Consider any vanishing linear combination
\begin{equation}\label{lin/eqn}
\textstyle\sum_{j=1}^m k_j\,\(t_1\)r_{i_{j,1}} r_{i_{j,2}} \dots r_{i_{j,n_j}} = 0
\end{equation}
of pairwise distinct conjugates $\(t_1\)r_{i_{j,1}} r_{i_{j,2}} \dots r_{i_{j,n_j}}$ of $t_1$, with coefficients $k_1, \dots, k_m \in \Z$.
Since $\(t_1\)r_1r_2r_3 = t_1$, without loss of generality we assume that all the $n_j$ are even.
Then, equation (\ref{phi/eqn}) tells us that the oriented angle from $t_1$ to $\(t_1\)r_{i_{j,1}} r_{i_{j,2}} \dots r_{i_{j,n_j}}$ equals
\begin{equation}\label{ang1/eqn}
2(\beta_{i_{j,2}} \!- \beta_{i_{j,1}}) + \dots + 2(\beta_{i_{j,n_j}} \!- \beta_{i_{j,n_j-1}}) \text{ mod } 2\pi\,,
\end{equation}
where $\beta_i$ denotes the oriented angle from $e_1$ to $e_i$, namely $\beta_1 = 0$, $\beta_2 = \pi - \alpha_3$ and $\beta_3 = \pi + \alpha_2$ mod $2\pi$.
This can also be written in the form
\begin{equation}\label{ang2/eqn}
m_{j,2} \alpha_2 + m_{j,3}\alpha_3 \text{ mod } 2\pi\,,
\end{equation}
with $m_{j,2}$ and $m_{j,3}$ even integers, hence the scalar product of (\ref{lin/eqn}) with $t_1$ gives
\begin{equation}\label{algrel1/eqn}
\textstyle\sum_{j=1}^m k_j \cos(m_{j,2} \alpha_2 + m_{j,3}\alpha_3) = 0\,,
\end{equation}
where $\rho^2$, the squared norm of $t_1$ (and all its conjugates), has been collected as a common factor and canceled.
Similarly, the scalar product of (\ref{lin/eqn}) with the vector obtained by rotating $t_1$ of $\pi/2$ radians gives
\begin{equation}\label{algrel2/eqn}
\textstyle\sum_{j=1}^m k_j \sin(m_{j,2} \alpha_2 + m_{j,3}\alpha_3) = 0\,.
\end{equation}

Notice that in the above equations the pairs $(m_{j,2}, m_{j,3})$ are different from each other, being the conjugates in (\ref{lin/eqn}) pairwise distinct. Moreover, possibly after suitable changes of signs in order to have either $m_{j,2} > 0$ or $m_{j,2} = 0$ and $m_{j,3} \geq 0$, we can collect the (at most two) terms corresponding to opposite pairs.

According to Lemma \ref{generic/thm}, the identities (\ref{algrel1/eqn}) and (\ref{algrel2/eqn}) hold for every triangle, that is for every $\alpha_2,\alpha_3 > 0$ such that $\alpha_2 + \alpha_3 < \pi$. Therefore, the linear independence over the reals of the complex functions $(x_1,x_2) \mapsto \exp(i(m_1 x_1 + m_2 x_2))$ with $m_1 > 0$ or $m_1 = 0$ and $m_2 \geq 0$, allows us to conclude that $k_j=0$ for every $j=1,\dots,m$.
\end{proof}

In view of the above proof, if $\tau$ is a generic triangle then any conjugate $t \in C(t_1)$ can be obtained from $t_1$ by a rotation of $2m\alpha_2 - 2n\alpha_3$ radians for some (uniquely determined, being $\tau$ typical) integers $m$ and $n$, hence $t = \(t_1\)(r_1r_2)^{n}(r_1r_3)^{m}$ according to equations (\ref{phi/eqn}) and (\ref{conjugate/eqn}). Therefore, for a generic triangle $\tau$ we can write
\begin{equation}\label{Ct/eqn}
C(t_1) = \{t_{n,m} = \(t_1\)(r_1r_2)^{n}(r_1r_3)^{m}\,,\,n,m \in \Z\}\,.
\end{equation}

Based on Theorem \ref{T-tau/thm}, next two Theorems \ref{G-tau/thm} and \ref{H-tau/thm} provide an infinite presentation of $G_\tau$ for a generic triangle $\tau$ and show that no such a finite presentation can exist.\break A minimal presentation of $G_\tau$ will be given in Section 5.

\medskip
\begin{theorem}\label{G-tau/thm}
For a generic triangle $\tau$ the group $G_\tau$ admits the presentation 
$$\langle x_1,x_2,x_3 \,|\, x_1^2, x_2^2, x_3^2, 
[w,\(w\)(x_1x_2)^n(x_1x_3)^m],\,n,m \in \Z\,,(n,m)\neq(0,0)\rangle\,,$$
with $w = (x_1x_2x_3)^2$ and the symbols $x_1,x_2,x_3$ corresponding to $r_1,r_2,r_3$, respectively. 
\end{theorem}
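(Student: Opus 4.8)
The plan is to prove that the homomorphism $\phi\colon\Gamma\to G_\tau$ sending $x_i\mapsto r_i$, where $\Gamma$ is the group given by the stated presentation, is an isomorphism. That it is well defined and surjective is immediate: in $G_\tau$ we have $r_i^2=1$, while $w$ and each word $\(w\)(x_1x_2)^n(x_1x_3)^m$ are sent to $t_1=(r_1r_2r_3)^2$ and to $t_{n,m}=\(t_1\)(r_1r_2)^n(r_1r_3)^m$ respectively, which all lie in the abelian group $T_\tau$ by Theorem \ref{T-tau/thm}, so the commutator relators die. Write $w_{n,m}=\(w\)(x_1x_2)^n(x_1x_3)^m\in\Gamma$, so that $\phi(w_{n,m})=t_{n,m}$.

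The first step is to split off the quotient $S_\tau$. Adjoining the relation $w=1$ to the presentation of $\Gamma$ trivialises every $w_{n,m}$ and so deletes all commutator relators, leaving exactly the presentation $\langle x_1,x_2,x_3\mid x_1^2,x_2^2,x_3^2,(x_1x_2x_3)^2\rangle$ of $S_\tau$ from Proposition \ref{S-tau/thm}. Hence the normal closure $\bar T$ of $w$ in $\Gamma$ fits into an exact sequence $1\to\bar T\to\Gamma\to S_\tau\to1$ which $\phi$ maps to the sequence (\ref{Gt-exact/eqn}) inducing the identity on $S_\tau$; in particular $\phi(\bar T)\subseteq T_\tau$. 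By the short five lemma, $\phi$ is an isomorphism if and only if its restriction $\bar\phi\colon\bar T\to T_\tau$ is, and $\bar\phi$ is surjective because, by Theorem \ref{T-tau/thm}, $T_\tau$ is the normal closure of $t_1=\phi(w)$ in $G_\tau$. So everything comes down to the injectivity of $\bar\phi$.

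For this I would compute $\bar T$ by Reidemeister--Schreier. Using the splitting $S_\tau=R\sqcup s_1R$, where $R=\langle s_1s_2,s_1s_3\rangle$ is the rotation subgroup (free abelian of rank $2$, as $\tau$ is typical by Proposition \ref{typical/thm}), one picks a Schreier transversal for $\bar T$ in $\Gamma$ whose elements represent the group elements $(s_1s_2)^n(s_1s_3)^m$ and $s_1(s_1s_2)^n(s_1s_3)^m$, $(n,m)\in\Z^2$. Once the relators $x_i^2$ are absorbed, the associated Schreier generators form a free basis of the normal closure of $w$ in $\Z/2*\Z/2*\Z/2$, and the elements $w_{n,m}$ sit among them. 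Rewriting the conjugates of the remaining relators $[w,w_{n,m}]$ by transversal elements, one shows that modulo these relators: (a) the $w_{n,m}$ commute pairwise, so $\bar T$ becomes abelian; and (b) every other Schreier generator is already a word in the $w_{n,m}$, so $\bar T=\langle w_{n,m}\mid n,m\in\Z\rangle$. The point behind (b) is that conjugating $w$ by an arbitrary word, after reduction modulo $\bar T$, only subjects $t_1$ to the regular action of $R$ together with the single substitution $\(w\)x_1=w_{n',m'}$, where $t_2=\(t_1\)r_1=t_{n',m'}$. Granting (a) and (b), $\bar T$ is the free abelian group on $\{w_{n,m}\}$: the composite $\bigoplus_{(n,m)\in\Z^2}\Z e_{n,m}\to\bar T\to T_\tau$ sending $e_{n,m}\mapsto w_{n,m}\mapsto t_{n,m}$ is the identity on the free abelian group $T_\tau=\bigoplus_{n,m}\Z\,t_{n,m}$ of (\ref{Ct/eqn}), so both arrows are isomorphisms; thus $\bar\phi$, and hence $\phi$, is an isomorphism.

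The hard part is step (b): the Reidemeister--Schreier bookkeeping that controls the conjugates of $w$ by the reflection coset $s_1R$ and by elements of $\bar T$ itself, showing that having the relator $[w,w_{n,m}]$ for \emph{every} $(n,m)$ — not just finitely many — suffices to collapse the a priori infinitely many extra free generators of the normal closure of $w$ down to the set $\{w_{n,m}\}$. Equivalently, in the language of presentations of group extensions this is the assertion that all the conjugation and abelianness relations of the extension $1\to T_\tau\to G_\tau\to S_\tau\to1$ are consequences of $\{[w,w_{n,m}]=1\}$ and $x_i^2=1$; it is the one place where one must exploit the internal structure $S_\tau\cong\Z^2\rtimes\Z/2$ coming from Proposition \ref{S-tau/thm} and the explicit form (\ref{Ct/eqn}) of $C(t_1)$, rather than just their bare statements.
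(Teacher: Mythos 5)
Your overall architecture is sound, and in substance it is the same as the paper's: present $G_\tau$ as the extension $1 \to T_\tau \to G_\tau \to S_\tau \to 1$, use Proposition \ref{S-tau/thm} for the quotient and Theorem \ref{T-tau/thm} for the kernel, and observe that once the normal closure $N$ of $w$ in the presented group $\Gamma$ is shown to be abelian and generated by the elements $w_{n,m}=\(w\)(x_1x_2)^n(x_1x_3)^m$, the surjection onto the free abelian group $T_\tau$ with basis $\{t_{n,m}\}$ (this is where genericity enters) forces the restriction of $\phi$ to $N$, and hence $\phi$ itself, to be an isomorphism. The problem is that your items (a) and (b) are exactly the content of the theorem, and they are asserted rather than proved; you say so yourself (``the hard part is step (b)''). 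A priori the listed relators only say that $w$ commutes with the particular conjugates $w_{n,m}$; nothing yet guarantees, inside $\Gamma$, that two arbitrary elements $w_{n,m}$ and $w_{n',m'}$ commute, nor that an arbitrary conjugate of $w$ --- for instance $\(w\)x_1$ or $\(w\)(x_1x_3)^m(x_1x_2)^n$ --- is again one of the $w_{n,m}$. In $G_\tau$ these facts are clear, but $G_\tau$ is the group you are trying to present, so they must be derived formally from the relators; this is a genuine gap, not a routine Reidemeister--Schreier formality.

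This is precisely the work the paper's proof carries out. From the relators $x_i^2$ alone one gets $[x_1x_3,x_1x_2]=\(w\)(x_1x_3)^{-1}$ (equation (\ref{1312/eqn})); combining this with the relator $[w,\(w\)(x_1x_2)^n(x_1x_3)^m]$ conjugated by $(x_1x_3)^{-1}$ and running a double induction, first on $m$ and then on $n$, yields the key identity $\(w\)(x_1x_3)^m(x_1x_2)^n=\(w\)(x_1x_2)^n(x_1x_3)^m$ (equation (\ref{13m12n/eqn})). Only with this identity in hand does one obtain your (a), since $[w_{n,m},w_{n',m'}]$ becomes conjugate to the listed relator $[w_{0,0},w_{n'-n,m'-m}]$, and your (b), via the explicit rewritings $\(w\)(x_1x_2)^n(x_1x_3)^m x_1=\(w\)(x_1x_2)^{-n+1}(x_1x_3)^{-m-1}$ and their analogues for $x_2,x_3$ (the paper's equations (\ref{tnmconj/eqn})), which show that conjugation by each $x_i$ permutes the set $\{w_{n,m}\}$ modulo the given relations. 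Until you supply these derivations (or an equivalent substitute), your proposal is an outline of the paper's proof with its central step missing, rather than a complete proof.
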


\begin{proof}
By a standard argument (see \cite[Sec. 10.2]{Joh97}), a presentation of $G_\tau$ can be derived from presentations of the groups $T_\tau$ and $S_\tau$ involved in the exact sequence (\ref{Gt-exact/eqn}). In view of Proposition \ref{typical/thm} a presentation of $S_\tau$ is given by Proposition \ref{S-tau/thm}, while $T_\tau$ is free abelian on the set of generators (\ref{Ct/eqn}), according to Theorem \ref{T-tau/thm}.

Pulling back the generators $s_i$ of $S_\tau$ to the generators $r_i$ of $G_\tau$, the relations $s_i^2 = 1$ still hold in the same form $r_i^2 = 1$, while the relation $(s_1s_2s_3)^2 = 1$ turns into the identity $(r_1r_2r_3)^2 = t_1 = t_{0,0}$. Moreover, for the generators of $T_\tau$ we have
\begin{equation}\label{tnm/eqn}
t_{n,m} = \(t_{0,0}\)(r_1r_2)^{n}(r_1r_3)^{m} = \((r_1r_2r_3)^2\)(r_1r_2)^{n}(r_1r_3)^{m}\,.
\end{equation}

At this point, to complete the set of relations for $G_\tau$ it remains to rewrite in the generators $r_i$'s, by using equation (\ref{tnm/eqn}), the commutators $[t_{n,m},t_{n',m'}]$ and the equations
\begin{equation}\label{tnmconj/eqn}
\(t_{n,m}\)r_1 = t_{-n+1,-m-1}\ ,\ 
\(t_{n,m}\)r_2 = t_{-n+2,-m-1}\ ,\ 
\(t_{n,m}\)r_3 = t_{-n+1,-m}\ ,
\end{equation}
which express in terms of the $t_{n,m}$'s their conjugates by the $r_i$'s. The latter equations could be easily shown to hold, by taking into account equation (\ref{conjugate/eqn}) and the commutativity of $\SO(2)$, and by using the relations $r_i^2$ and the trivial identity $\((r_1r_2r_3)^2\) r_1r_2r_3 = (r_1r_2r_3)^2$. However, we are going to validate them in a different way.

In fact, the rest of the proof is aimed to see how the rewriting of equations (\ref{tnmconj/eqn}) in the $r_i$'s, as well the rewriting of the commutators $[t_{n,m},t_{n',m'}]$ with $(n',m')\neq (n,m)$, can be derived from the relations $r_i^2$ and the relations
\begin{equation}\label{relGt/eqn}
[(r_1r_2r_3)^2,\((r_1r_2r_3)^2\)(r_1r_2)^n(r_1r_3)^m]
\end{equation}
with $n,m \in \Z$ and $(n,m) \neq (0,0)$, which represent the special commutators $[t_{0,0},t_{n,m}]$.

We start by observing that the relations $r_i^2$ imply
\begin{equation}\label{1312/eqn}
[r_1r_3,r_1r_2] = r_1r_3r_1r_2r_3r_1r_2r_1 = \((r_1r_2r_3)^2\)(r_1r_3)^{-1}\,,
\end{equation}
which in turn, together with the relation $[(r_1r_2r_3)^2,\((r_1r_2r_3)^2\)(r_1r_2)^n(r_1r_3)^m]$ conjugated by $(r_1r_3)^{-1}$, implies 
\begin{equation}\nonumber
\((r_1r_2r_3)^2\)(r_1r_2)^n(r_1r_3)^mr_1r_2 = \((r_1r_2r_3)^2\)(r_1r_2)^n(r_1r_3)^{m\mp1}r_1r_2(r_1r_3)^{\pm1}\,.
\end{equation}
Hence, by increasing/decreasing induction on $m$, based on the trivial case of $m=0$,
\begin{equation}\nonumber
\((r_1r_2r_3)^2\)(r_1r_2)^n(r_1r_3)^m(r_1r_2)^{\pm1} = \((r_1r_2r_3)^2\)(r_1r_2)^{n\pm1}(r_1r_3)^m
\end{equation}
for every $m \in \Z$.
Finally, by increasing/decreasing induction on $n$, based on the trivial case of $n=0$, we get
\begin{equation}\label{13m12n/eqn}
\((r_1r_2r_3)^2\)(r_1r_3)^m(r_1r_2)^n = \((r_1r_2r_3)^2\)(r_1r_2)^n(r_1r_3)^m
\end{equation}
for every $n,m \in \Z$.

As a consequence of (\ref{13m12n/eqn}), the rewriting of any commutator $[t_{n,m},t_{n',m'}]$ is equivalent up to conjugation to that of $[t_{0,0},t_{n'-n,m'-m}]$. In fact,
\begin{equation}\nonumber
[\mkern1mu\((r_1r_2r_3)^2\)(r_1r_2)^n(r_1r_3)^m,\((r_1r_2r_3)^2\)(r_1r_2)^{n'}(r_1r_3)^{m'}]
\end{equation}
once conjugated by $(r_1r_3)^{-m}(r_1r_2)^{-n}$ becomes
\begin{equation}\nonumber
[(r_1r_2r_3)^2,\((r_1r_2r_3)^2\)(r_1r_2)^{n'}(r_1r_3)^{m'-m}(r_1r_2)^{-n}]\,,
\end{equation}
and this is equivalent to
\begin{equation}\nonumber
[(r_1r_2r_3)^2,\((r_1r_2r_3)^2\)(r_1r_2)^{n'-n}(r_1r_3)^{m'-m}]
\end{equation}
by equation (\ref{13m12n/eqn}).

Moreover, we obtain the rewriting of the relations (\ref{tnmconj/eqn}) from the relations $r_i^2$ and the relations (\ref{relGt/eqn}), by the following chains of equalities, whose last step is based on two applications of equation (\ref{13m12n/eqn}):
\begin{equation}\nonumber
\begin{array}{rcl}\vrule width0pt depth0pt height12pt
\((r_1r_2r_3)^2\)(r_1r_2)^n(r_1r_3)^mr_1
&=& \((r_1r_2r_3)^2\)r_1(r_1r_2)^{-n}(r_1r_3)^{-m}\\[2pt]
&=& \((r_1r_2r_3)^2\)r_1r_2r_3r_1(r_1r_2)^{-n}(r_1r_3)^{-m}\\[2pt]
&=& \((r_1r_2r_3)^2\)(r_1r_2)^{-n+1}(r_1r_3)^{-m-1}\,;\\[10pt]
\((r_1r_2r_3)^2\)(r_1r_2)^n(r_1r_3)^mr_2
&=& \((r_1r_2r_3)^2\)r_1(r_1r_2)^{-n}(r_1r_3)^{-m}r_1r_2\\[2pt]
&=& \((r_1r_2r_3)^2\)r_1r_2r_3r_1(r_1r_2)^{-n}(r_1r_3)^{-m}r_1r_2\\[2pt]
&=& \((r_1r_2r_3)^2\)(r_1r_2)^{-n+2}(r_1r_3)^{-m-1}\,;\\[10pt]
\((r_1r_2r_3)^2\)(r_1r_2)^n(r_1r_3)^mr_3
&=& \((r_1r_2r_3)^2\)r_1(r_1r_2)^{-n}(r_1r_3)^{-m}r_1r_3\\[2pt]
&=& \((r_1r_2r_3)^2\)r_1r_2r_3r_1(r_1r_2)^{-n}(r_1r_3)^{-m}r_1r_3\\[2pt]
&=& \((r_1r_2r_3)^2\)(r_1r_2)^{-n+1}(r_1r_3)^{-m}\,.
\end{array}
\end{equation}

This concludes the proof.
\end{proof}


\section{The rotation subgroup\label{rotations/sec}}

By intersecting the exact sequence (\ref{Gt-exact/eqn}) with the group $\SE(2)$ of orientation preserving Euclidean isometries, we get the new exact sequence 
\begin{equation}\label{Ht-exact/eqn}
1 \longrightarrow T_\tau \buildrel \iota_{\tau\smash{|}} \over \longrightarrow H_\tau \buildrel \lambda_{\tau\smash{|}} \over \longrightarrow R_\tau \longrightarrow 1\,,
\end{equation}
where $H_\tau = G_\tau \cap \SE(2)$ is a subgroup of index 2 in $G_\tau$, which we call the \textsl{rotation subgroup}, while $R_\tau = S_\tau \cap \SE(2) \subset \SO(2)$
is the abelian subgroup of linear rotations in $S_\tau$. As an extension of an abelian group by another abelian group, $H_\tau$ is metabelian, hence $G_\tau$ is virtually metabelian.

\pagebreak

If $\tau$ is a typical triangle, then $R_\tau$ is a dense free abelian subgroup of $\SO(2)$ generated by the two rotations $s_1s_2$ and $s_1s_3$ (cf. proof of Lemma \ref{stable/thm}).
Moreover, in the light of the first part of Theorem \ref{T-tau/thm}, equations (\ref{Ct/eqn}) and (\ref{1312/eqn}) imply that $T_\tau$ coincides with the commutator subgroup $[H_\tau,H_\tau]$ when $\tau$ is typical.

\begin{theorem}\label{H-tau/thm}
For a generic triangle $\tau$ the rotation subgroup $H_\tau \subset G_\tau$ is a free metabelian group of rank 2 generated by the rotations $r_1r_2$ and $r_1r_3$, meaning that it is isomorphic to the metabelianization $F_2/[\mkern1mu[F_2,F_2],[F_2,F_2]\mkern1mu]$ of the free group $F_2$ on two generators corresponding to those rotations. As a consequence, $\,G_\tau$ does not admit any finite presentation.
\end{theorem}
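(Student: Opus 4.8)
The plan is to prove that $\phi$ is in fact an isomorphism, from which everything follows. First I would fix $M=F_2/[[F_2,F_2],[F_2,F_2]]$, the free metabelian group of rank $2$, with free generators $u,v$. Since $H_\tau$ is metabelian (it is, by the exact sequence (\ref{Ht-exact/eqn}), an extension of the abelian group $R_\tau$ by the abelian group $T_\tau$) and is generated by $r_1r_2$ and $r_1r_3$, the universal property of $M$ gives a surjection $\phi\colon M\to H_\tau$ with $\phi(u)=r_1r_2$, $\phi(v)=r_1r_3$; the task is to show $\ker\phi=1$. Put $N=[M,M]$. Then $\phi$ maps $N$ onto $[H_\tau,H_\tau]=T_\tau$ (the equality $[H_\tau,H_\tau]=T_\tau$ holding since $\tau$, being generic, is typical by Proposition \ref{typical/thm}, as already observed just before the theorem), and it induces on abelianizations the homomorphism $M/N=\Z^2\to H_\tau/T_\tau=R_\tau$ sending the classes of $u,v$ to $s_1s_2,s_1s_3$; as $R_\tau$ is free abelian of rank $2$ on $s_1s_2,s_1s_3$, this induced map is an isomorphism. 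By the five lemma (the monomorphism half is all that is needed), $\phi$ is injective as soon as its restriction $\psi\colon N\to T_\tau$ is injective.

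The next step is to use the structure of module over $\Lambda=\Z[\Z^2]=\Z[x^{\pm1},y^{\pm1}]$ carried by both $N$ and $T_\tau$: in each case the ambient group acts by conjugation on the abelian normal subgroup, the action factors through the rank-$2$ abelianization, and $x,y$ act as conjugation by (the classes of) $r_1r_2$ and $r_1r_3$. Because the isomorphism $M/N\cong R_\tau$ above is compatible with these actions, $\psi$ is $\Lambda$-linear. On the one hand, $N$ is a \emph{cyclic} $\Lambda$-module: the commutator subgroup of $F_2$ is the normal closure of $[x_1,x_2]$, so $N$ is the normal closure in $M$ of $c=[u,v]$, which (as $N$ is abelian) equals $\Lambda\cdot c$. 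On the other hand, $T_\tau$ is a \emph{free} $\Lambda$-module of rank $1$ on $t_1$: by Theorem \ref{T-tau/thm} the conjugates $t_{n,m}$ of $t_1$ listed in (\ref{Ct/eqn}) form a $\Z$-basis of $T_\tau$, while the conjugation identities established inside the proof of Theorem \ref{G-tau/thm} — essentially (\ref{13m12n/eqn}) — give $x\cdot t_{n,m}=t_{n+1,m}$ and $y\cdot t_{n,m}=t_{n,m+1}$, so $t_{n,m}=x^ny^m\cdot t_1$ and $T_\tau\cong\Lambda$.

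It then suffices to note the elementary fact that a surjective $\Lambda$-module map from a cyclic module $N$ onto a free module of rank $1$ is injective. Indeed, $\Lambda$ is free, so $0\to\ker\psi\to N\to\Lambda\to0$ splits and $N\cong\ker\psi\oplus\Lambda$; a quotient of a cyclic module is cyclic, hence $\ker\psi$ is cyclic, say $\ker\psi=\Lambda w$, and $\Lambda w\oplus\Lambda$ is cyclic; but a generator of $\Lambda w\oplus\Lambda$ must have unit second coordinate, so after rescaling it is of the form $(w_0,1)$, and then $\lambda\cdot(w_0,1)\in\Lambda w\oplus0$ forces $\lambda=0$, whence $\Lambda w=0$. (If one prefers to invoke the classical description of $N=[M,M]$ as itself \emph{free} of rank $1$ over $\Lambda$ on the class of $[x_1,x_2]$, obtained via the Magnus embedding and Fox calculus, then by (\ref{1312/eqn}) one has $\phi(c)=[r_1r_2,r_1r_3]=t_{0,-1}^{-1}$, i.e.\ $-y^{-1}\cdot t_1$ in the $\Lambda$-module $T_\tau$ — a unit multiple of its free generator — so $\psi$ takes a free generator to a free generator.) Either way $\psi$, and hence $\phi$, is an isomorphism, so $H_\tau$ is free metabelian of rank $2$ generated by $r_1r_2,r_1r_3$.

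The only substantive input here is Theorem \ref{T-tau/thm}, which is already in hand; what remains is essentially bookkeeping, the one point requiring care being the verification that $\phi$ matches the two $\Lambda$-module structures and that the computations behind Theorem \ref{G-tau/thm} do exhibit $T_\tau$ as free of rank $1$ over $\Lambda$. For the concluding assertion I would combine two standard facts: the free metabelian group of rank $2$ is not finitely presented (a classical result; this is precisely the group whose presentations are the subject of \cite{BCGS14} and the surrounding literature), and a group is finitely presented if and only if some — equivalently, every — subgroup of finite index is finitely presented. Since $H_\tau$ has index $2$ in $G_\tau$ and, being free metabelian of rank $2$, is not finitely presented, $G_\tau$ admits no finite presentation.
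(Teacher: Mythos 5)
Your proposal is correct, but it takes a genuinely different route from the paper's actual proof. The paper proceeds through presentations: it applies the Reidemeister--Schreier method to the presentation of $G_\tau$ from Theorem \ref{G-tau/thm} with transversal $\{1,x_1\}$, eliminates the redundant generators to obtain an explicit two-generator presentation of $H_\tau$ whose relators all lie in the second derived subgroup of $F_2$, and then concludes $H_\tau\cong F_2/[\mkern1mu[F_2,F_2],[F_2,F_2]\mkern1mu]$ because $H_\tau$ is metabelian, so the kernel of $F_2\to H_\tau$ both contains and is contained in $[\mkern1mu[F_2,F_2],[F_2,F_2]\mkern1mu]$. You instead carry out, in full, the structural argument that the paper only gestures at in one sentence (``the above observation that $[H_\tau,H_\tau]=T_\tau$ \dots would suffice''): a surjection $\phi$ from the free metabelian group, reduction via the two exact sequences to injectivity on derived subgroups (using that $R_\tau$ is free abelian of rank $2$ on $s_1s_2,s_1s_3$ for typical $\tau$), and the $\Lambda=\Z[x^{\pm1},y^{\pm1}]$-module computation in which $[M,M]$ is cyclic while Theorem \ref{T-tau/thm} together with (\ref{Ct/eqn}) and (\ref{conjugate/eqn}) exhibits $T_\tau$ as free of rank $1$ over $\Lambda$, so a cyclic module surjecting onto it does so isomorphically (your direct check via (\ref{1312/eqn}) that $\phi([u,v])$ is a unit multiple of $t_1$ is a clean shortcut). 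Both treatments of the final claim coincide: non-finite-presentability of the rank-$2$ free metabelian group (\cite{Shm65}) plus invariance of finite presentability under finite index. What each approach buys: yours is more conceptual, independent of the presentation in Theorem \ref{G-tau/thm}, and isolates exactly which inputs (genericity via Theorem \ref{T-tau/thm}, typicality via Proposition \ref{typical/thm}) are used where; the paper's computation, though heavier, produces the explicit infinite presentation of $H_\tau$ that is indispensable later, in Theorems \ref{H-tau-min/thm}, \ref{G-tau-min/thm} and \ref{minimality/thm}, so if you adopted your route you would still need to derive that presentation separately. The only points needing care in your write-up are the ones you flag yourself: fixing consistent left/right conventions for the conjugation actions so that $\psi$ is honestly $\Lambda$-linear, and noting that the distinctness of the $t_{n,m}$ for distinct $(n,m)$ (typicality) is what makes $T_\tau\cong\Lambda$ rather than a proper quotient.
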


\begin{proof}
Since $F_2/[\mkern1mu[F_2,F_2],[F_2,F_2]\mkern1mu]$ is known not to admit any finite presentation \cite{Shm65} (cf. also \cite{BGH13}) and $H_\tau$ is a finite index subgroup of $G_\tau$, the second part of the statement follows (see \cite[Sec. 9.1]{Joh97}) once we know that $H_\tau \cong F_2/[\mkern1mu[F_2,F_2],[F_2,F_2]\mkern1mu]$. 

The above observation that $[H_\tau,H_\tau] = T_\tau$ together with the second part of Theorem \ref{T-tau/thm} and the fact that $S_\tau$ is free abelian of rank 2, would suffice to prove that $H_\tau$ is free metabelian of rank 2. However, for future reference, we give an explicit isomorphism $H_\tau \cong F_2/[\mkern1mu[F_2,F_2],[F_2,F_2]\mkern1mu]$ by way of a presentation of $H_\tau$.

Starting from the presentation of $G_\tau$ given in Theorem \ref{G-tau/thm} and applying the Reide\-meister-Schreier method (see \cite[Sec. 9.1]{Joh97}) to the subgroup $H_\tau$ with Schreier transversal $\{1,x_1\}$, we get the following presentation for $H_\tau$
$$\langle y_2,y_3,z_1,z_2,z_3 \,|\, z_1, y_2z_2, y_3z_3, a_{n,m}, z_2y_2, z_3y_3, a'_{n,m},\, n,m \in \Z\,,(n,m)\neq(0,0)\rangle\,,$$
where the generators are given by $y_2 = x_2x_1^{-1}, y_3 = x_3x_1^{-1}, z_1 = x_1^2, z_2 = x_1x_2, z_3 = x_1x_3$, and the relations $a_{n,m}$ and $a'_{n,m}$ are the transcriptions in term of such generators of the commutator $[w,\(w\)(x_1x_2)^n(x_1x_3)^m]$ and its conjugate $\([w,\(w\)(x_1x_2)^n(x_1x_3)^m]\)x_1^{-1} = [\(w\)x_1z_1^{-1},\(w\)x_1(x_2x_1)^n(x_3x_1)^mz_1^{-1}]$, respectively, that is
\begin{equation}\nonumber
\begin{array}{c}\vrule width0pt depth0pt height6pt
a_{n,m} = [z_2y_3z_1y_2z_3,\(z_2y_3z_1y_2z_3\)z_2^nz_3^m]\,,\\[6pt]
a'_{n,m} = [\(y_2z_3z_2y_3z_1\)z_1^{-1},\(y_2z_3z_2y_3z_1\)(y_2z_1)^n(y_3z_1)^mz_1^{-1}]\,.
\end{array}
\end{equation}

Now, we eliminate the generators $z_1,z_2$ and $z_3$, by using the relations $z_1, y_2z_2$ and $y_3z_3$, and then replace $y_3$ by $y_3^{-1}$ to obtain the new presentation for $H_\tau$
\begin{equation}\nonumber
\langle y_2,y_3\,|\,[\mkern1mu[y_2,y_3],\([y_2,y_3]\)y_2^ny_3^m\mkern1mu],[\mkern1mu[y_2^{-1},y_3^{-1}],\([y_2^{-1},y_3^{-1}]\)y_2^ny_3^m\mkern1mu],\, n,m \in \Z\,,(n,m)\neq(0,0)\rangle\,.
\end{equation}

Finally, since $H_\tau$ is metabelian and all the relations in the above presentation of $H_\tau$ belong to $[\mkern1mu[F_2,F_2],[F_2,F_2]\mkern1mu]$, we can conclude that $H_\tau \cong F_2/[\mkern1mu[F_2,F_2],[F_2,F_2]\mkern1mu]$ with the rotations $r_1r_2$ and $r_1r_3$ corresponding to the free generators $y_2^{-1}$ and $y_3$ of $F_2$,\break respectively.
\end{proof}

In the following, we provide presentations of the groups $H_\tau$ and $G_\tau$, which are minimal in the sense that no relation can be removed without changing the group. According to Theorem \ref{H-tau/thm}, this solves in the affermative the problem of the existence of a minimal presentation of the free metabelian group $F_2/[\mkern1mu[F_2,F_2],[F_2,F_2]\mkern1mu]$ of rank 2 (cf. \cite{BCGS14}).

The two presentations are given in Theorems \ref{H-tau-min/thm} and \ref{G-tau-min/thm}, respectively, while their minimality is proved in Theorem \ref{minimality/thm}. For $H_\tau$ we just refine the presentation considered in the proof of Theorem \ref{H-tau/thm} to make it minimal. On the contrary, the minimal presentation of $G_\tau$ is derived from that of $H_\tau$, and it has different relations with respect to the one given in Theorem \ref{G-tau/thm}.

\begin{theorem}\label{H-tau-min/thm}
For a generic triangle $\tau$ the group $H_\tau$ admits the presentation 
$$\langle y_2,y_3 \,|\,  
[\mkern1mu[y_2,y_3],\([y_2,y_3]\)y_2^ny_3^m]\,,\,n,m \in \Z\,,(n,m) > (0,0)\rangle\,,$$
with the symbols $y_2,y_3$ corresponding to the rotations $r_2r_1$ and $r_1r_3$, respectively. 
\end{theorem}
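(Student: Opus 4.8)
The plan is to cut down the presentation of $H_\tau$ obtained in the proof of Theorem~\ref{H-tau/thm}. Write $F$ for the free group on $y_2,y_3$, put $c=[y_2,y_3]$ and $F''=[\mkern1mu[F,F],[F,F]\mkern1mu]$; by Theorem~\ref{H-tau/thm} we have $H_\tau\cong F/F''$, and, with $R_{n,m}=[\mkern1mu c,\(c\)y_2^ny_3^m\mkern1mu]$, the subgroup $F''$ equals the normal closure in $F$ of the family $\{R_{n,m},R'_{n,m}:(n,m)\neq(0,0)\}$, where $R'_{n,m}$ is the companion relator built from $[y_2^{-1},y_3^{-1}]$. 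Let $N$ be the normal closure in $F$ of the sub-family $\{R_{n,m}:(n,m)>(0,0)\}$ and set $\bar F=F/N$. Since each $R_{n,m}$ is a commutator of two elements of $[F,F]$ we have $N\subseteq F''$; hence $\bar F$ is metabelian if and only if $F''\subseteq N$, i.e.\ if and only if $N=F''$, i.e.\ if and only if $\bar F=F/F''\cong H_\tau$ — which, since the presentation $\langle y_2,y_3\mid R_{n,m}:(n,m)>(0,0)\rangle$ is by definition a presentation of $\bar F$, is exactly the assertion of the theorem. So the whole proof reduces to showing that $\bar F$ is metabelian. As a guide I would also record that the commutator subgroup of the free metabelian group of rank $2$, viewed as a $\Z[\Z^2]$-module under the conjugation action, is free of rank one on the class of $c$ (Koszul resolution of $\Z=\Z[\Z^2]/(t_1-1,t_2-1)$); thus the conjugates $\(c\)y_2^iy_3^j$, $(i,j)\in\Z^2$, map to a $\Z$-basis of $F'/F''$, and this will describe $\bar F'$ once we know it is abelian (re-proving Theorem~\ref{T-tau/thm} along the way).

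Since $[F,F]$ is the normal closure of $c$ in $F$, showing $\bar F$ metabelian amounts to showing that in $\bar F$ all conjugates of $\bar c=[\bar y_2,\bar y_3]$ pairwise commute. I would first extract the easy consequences of the retained relators: those with $n>0$ give $[\mkern1mu\bar c,\(\bar c\)\bar y_2^n\bar y_3^m\mkern1mu]=1$ for all $n>0$ and all $m$; conjugating $R_{n,0}$ ($n>0$) by $\bar y_2^{-n}$ and $R_{0,m}$ ($m>0$) by $\bar y_3^{-m}$ drops the sign restriction on pure powers, giving $[\mkern1mu\bar c,\(\bar c\)\bar y_2^i\mkern1mu]=[\mkern1mu\bar c,\(\bar c\)\bar y_3^j\mkern1mu]=1$ for all $i,j$; and conjugating $R_{n,m}$ ($n>0$) by powers of $\bar y_3$ shows that $\(\bar c\)\bar y_3^t$ commutes with $\(\bar c\)\bar y_2^n\bar y_3^m$ for all $t,m$ and all $n>0$. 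The real difficulty is to push commutation onto the lattice points with $n<0$: conjugating $R_{n,m}$ ($n>0$) by $(\bar y_2^n\bar y_3^m)^{-1}=\bar y_3^{-m}\bar y_2^{-n}$ yields $\(\bar c\)\bar y_3^{-m}\bar y_2^{-n}$, which differs from the wanted $\(\bar c\)\bar y_2^{-n}\bar y_3^{-m}$ by conjugation by an element of $\bar F'$, and that correction cannot be discarded without already knowing that the pertinent commutators of conjugates of $\bar c$ vanish. I would handle this by an induction on $|n|+|m|$ whose hypothesis simultaneously asserts that (a) $[\mkern1mu\bar c,\(\bar c\)\bar y_2^n\bar y_3^m\mkern1mu]=1$ for every lattice point of norm $<k$, and (b) the subgroup generated by those particular conjugates is abelian and is mapped into itself, modulo its own commutators, by conjugation by $\bar y_2^{\pm1}$ and $\bar y_3^{\pm1}$ — precisely what is needed to absorb the correction term at the inductive step. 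Once every pair of conjugates of $\bar c$ is shown to commute, $\bar F'=\langle\(\bar c\)g:g\in\bar F\rangle$ is abelian, so $\bar F$ is metabelian; then $N=F''$, $\bar F\cong H_\tau$, and in particular each dropped relator — the $R_{n,m}$ with $(n,m)<(0,0)$ and all the $R'_{n,m}$, all of which lie in $F''=N$ — is a consequence of the retained ones, so the presentation in the statement holds.

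I expect the interleaved induction to be the main obstacle: the delicate point is to formulate clause~(b) tightly enough that the correction terms produced by the non-commutativity of $\bar y_2$ and $\bar y_3$ are, at each stage, already known to be trivial, so that the induction genuinely closes. The remaining ingredients — the reduction to metabelianness of $\bar F$, the extraction of the easy commutation relations, and the concluding bookkeeping — are routine commutator calculus.
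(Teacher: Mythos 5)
Your opening reduction is correct and matches the paper's goal: each retained relator $R_{n,m}=[\mkern1mu[y_2,y_3],\([y_2,y_3]\)y_2^ny_3^m\mkern1mu]$ lies in $F''$, so the normal closure $N$ of the retained family sits inside $F''$, and by Theorem \ref{H-tau/thm} the theorem is equivalent to $F''\subseteq N$, i.e.\ to deriving the omitted relators (those with $(n,m)$ outside the half-lattice, and the companion relators built from $[y_2^{-1},y_3^{-1}]$) from the retained ones. Your ``easy consequences'' are also fine. But the heart of the theorem is exactly the step you leave open. You correctly observe that conjugating $R_{n,m}$ by $(y_2^ny_3^m)^{-1}$ produces $\([y_2,y_3]\)y_3^{-m}y_2^{-n}$ rather than the wanted $\([y_2,y_3]\)y_2^{-n}y_3^{-m}$, and that the discrepancy is a conjugation by an element of the derived subgroup which cannot be discarded without circularity; your remedy is an induction on $|n|+|m|$ with an auxiliary clause (b) that you do not formulate precisely and whose closure you explicitly say you are unsure of. As written this is a proof plan, not a proof: the entire content of the statement is concentrated in that unverified induction.

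For comparison, the paper closes this gap with a concrete identity rather than an abstract invariance clause. From the retained relations one first gets $\([y_2,y_3]\)y_2^ny_3^m y_2 = \([y_2,y_3]\)y_2^ny_3^{m\mp1} y_2 y_3^{\pm1}$ for every $(n,m)>(0,0)$, and then a double induction (increasing/decreasing on $m$, then on $n$, exactly as for equation (\ref{13m12n/eqn}) in the proof of Theorem \ref{G-tau/thm}) yields the swap identity (\ref{y3my2n/eqn}), $\([y_2,y_3]\)y_3^my_2^n = \([y_2,y_3]\)y_2^ny_3^m$, valid within the retained range. This identity is precisely the missing ``clause (b)'': it lets one rewrite $R_{n,m}$ as $[\mkern1mu[y_2,y_3],\([y_2,y_3]\)y_3^my_2^n\mkern1mu]$, so that conjugating by $y_2^{-n}y_3^{-m}$ and inverting gives $[\mkern1mu[y_2,y_3],\([y_2,y_3]\)y_2^{-n}y_3^{-m}\mkern1mu]$ with no correction term left over, and the identity $[y_2^{-1},y_3^{-1}]=\([y_2,y_3]\)y_2y_3$ together with (\ref{y3my2n/eqn}) then delivers the primed relators as well. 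Until you prove such a swap lemma (or an equivalent), your induction does not close and the proposal has a genuine gap at its central step.
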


\begin{proof}
The relations in the statement imply that
\begin{equation}\nonumber
\([y_2,y_3]\)y_2^ny_3^m y_2 = \([y_2,y_3]\)y_2^ny_3^{m\mp1} y_2 y_3^{\pm1}
\end{equation}
for every $(n,m) > (0,0)$. From this family of equalities, arguing as in the proof of Theorem \ref{H-tau/thm} when obtaining equation (\ref{13m12n/eqn}), by increasing/decreasing induction on $m$, based on the trivial case of $m = 0$, and then by induction on $n \geq 0$, based on the trivial case of $n = 0$, we get 
\begin{equation}\label{y3my2n/eqn}
\([y_2,y_3]\)y_3^my_2^n = \([y_2,y_3]\)y_2^ny_3^m
\end{equation}
for every $(n,m) > (0,0)$.
By using such equation, we can rewrite $[\mkern1mu[y_2,y_3],\([y_2,y_3]\)y_2^ny_3^m]$ as 
$[\mkern1mu[y_2,y_3],\([y_2,y_3]\)y_3^my_2^n]$, which once conjugated by $y_2^{-n}y_3^{-m}$ and inverted becomes
\begin{equation}\nonumber
[\mkern1mu[y_2,y_3],\([y_2,y_3]\)y_2^{-n}y_3^{-m}]\,.
\end{equation}
This means that the relation $[\mkern1mu[y_2,y_3],\([y_2,y_3]\)y_2^ny_3^m]$, assumed to hold for every $(n,m) > (0,0)$, actually holds for every $(n,m) \neq (0,0)$.

Now, taking into account the identity $[y_2^{-1},y_3^{-1}] = \([y_2,y_3]\)y_2y_3$, from (\ref{y3my2n/eqn}) we get
\begin{equation}\label{y3my2ninv/eqn}
\([y_2^{-1},y_3^{-1}]\)y_3^my_2^n = \([y_2^{-1},y_3^{-1}]\)y_2^ny_3^m
\end{equation}
for every $(n,m) > (0,0)$, and we
can write the commutator $[\mkern1mu[y_2^{-1},y_3^{-1}],\([y_2^{-1},y_3^{-1}]\)y_2^ny_3^m]$ with as 
 $[\([y_2,y_3]\)y_2y_3,\([y_2,y_3]\)y_2y_3y_2^ny_3^m]$. By using equation (\ref{y3my2n/eqn}) once again, we immediately have the validity of the relation $[\mkern1mu[y_2^{-1},y_3^{-1}],\([y_2^{-1},y_3^{-1}]\)y_2^ny_3^m]$ for every $(n,m) > (0,0)$, and then we can extend such validity to every $(n,m) \neq (0,0)$, as we did above for the relation $[\mkern1mu[y_2,y_3],\([y_2,y_3]\)y_2^ny_3^m]$, but using equation (\ref{y3my2ninv/eqn}) instead of (\ref{y3my2n/eqn}).
 
In conclusion, the presentation in the statement is equivalent to the last presentation of $H_\tau$ given in the proof of Theorem \ref{H-tau/thm} (after the replacement of $y_3$ by $y_3^{-1}$), whose generators $y_2 = x_2x_1^{-1}$ and $y_3 = (x_3x_1^{-1})^{-1}$ correspond to the rotations $r_2r_1$ and $r_1r_3$, respectively.
\end{proof}

\begin{theorem}\label{G-tau-min/thm}
For a generic triangle $\tau$ the group $G_\tau$ admits the presentation 
$$\langle x_1,x_2,x_3 \,|\, x_1^2, x_2^2, x_3^2, 
[v,\(v\)(x_2x_1)^n(x_1x_3)^m]\,,\,n,m \in \Z\,,(n,m)>(0,0)\rangle\,,$$
with $v = [x_2x_1,x_1x_3]$ and the symbols $x_1,x_2,x_3$ corresponding to $r_1,r_2,r_3$, respectively. 
\end{theorem}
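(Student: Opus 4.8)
The plan is to deduce the presentation of $G_\tau$ directly from the presentation of $H_\tau$ given in Theorem \ref{H-tau-min/thm}, using that $G_\tau$ is a split extension of $H_\tau$ by $\Z/2$. Indeed $r_1$ has order $2$ and reverses orientation, so $r_1\notin H_\tau$; since $H_\tau$ has index $2$ in $G_\tau$ and $r_1$ together with $H_\tau$ generates $G_\tau$ (note $r_2=(r_2r_1)r_1$ and $r_3=r_1(r_1r_3)$, with $r_2r_1,r_1r_3\in H_\tau$), the subgroup $\langle r_1\rangle\cong\Z/2$ is a complement of $H_\tau$ in $G_\tau$. Hence the standard recipe for presenting a split extension (see \cite[Sec. 10.2]{Joh97}) produces a presentation of $G_\tau$ from the presentation $\langle y_2,y_3\mid [\mkern1mu[y_2,y_3],\([y_2,y_3]\)y_2^ny_3^m],\ (n,m)>(0,0)\rangle$ of $H_\tau$ by adjoining a generator $x_1$ lifting the nontrivial element of $\Z/2$, the relation $x_1^2$, and the relations $x_1y_ix_1^{-1}=\phi(y_i)$, where $\phi$ is conjugation by $r_1$ and $\phi(y_i)$ is rewritten as a word in $y_2,y_3$.

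First I would compute this conjugation action. From the proof of Theorem \ref{H-tau-min/thm}, $y_2$ and $y_3$ correspond to the rotations $r_2r_1$ and $r_1r_3$, so inside $G_\tau$, using $x_i^2=1$ (hence $x_i^{-1}=x_i$), one has $y_2=x_2x_1$, $y_3=x_1x_3$, and $x_1y_2x_1^{-1}=x_1x_2=(x_2x_1)^{-1}=y_2^{-1}$, $x_1y_3x_1^{-1}=x_3x_1=(x_1x_3)^{-1}=y_3^{-1}$. Thus conjugation by $r_1$ inverts both chosen rotation generators, as one expects for a reflection acting on a planar rotation --- here it is literal at the word level because the reflections have order $2$. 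This yields the intermediate presentation $G_\tau=\langle y_2,y_3,x_1\mid x_1^2,\ x_1y_2x_1y_2,\ x_1y_3x_1y_3,\ [\mkern1mu[y_2,y_3],\([y_2,y_3]\)y_2^ny_3^m],\ (n,m)>(0,0)\rangle$, where $x_1^2$ has been used to turn $x_1^{-1}$ into $x_1$ in the two action relators.

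Next I would apply a Tietze transformation: introduce $x_2:=y_2x_1$ and $x_3:=x_1y_3$ (so that $x_2=r_2$ and $x_3=r_3$), whence $y_2=x_2x_1$, $y_3=x_1x_3$, and eliminate $y_2,y_3$. Under this substitution the action relator $x_1y_2x_1y_2$ becomes $x_1x_2x_1^2x_2x_1$, which collapses, after cancelling $x_1^2$ and conjugating by $x_1$, to $x_2^2$; likewise $x_1y_3x_1y_3$ becomes $x_1^2x_3x_1^2x_3$, i.e. $x_3^2$. The relation $x_1^2$ is untouched, and each relator $[\mkern1mu[y_2,y_3],\([y_2,y_3]\)y_2^ny_3^m]$ becomes $[v,\(v\)(x_2x_1)^n(x_1x_3)^m]$ with $v=[x_2x_1,x_1x_3]$, since $[y_2,y_3]=[x_2x_1,x_1x_3]$ and $y_2^ny_3^m=(x_2x_1)^n(x_1x_3)^m$. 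Collecting these relations, and noting that $x_1,x_2,x_3$ correspond to $r_1,r_2,r_3$, gives precisely the presentation in the statement.

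Essentially everything is routine; the one step to carry out with care is the bookkeeping of inverses in the Tietze manipulations. Because $x_i^{-1}=x_i$ only once $x_i^2=1$ is imposed, one must verify that replacing $x_1^{-1}$ by $x_1$ in the action relators, and then collapsing those relators to $x_2^2$ and $x_3^2$, are legitimate Tietze transformations --- i.e. preserve the normal closure of the relator set --- which they are as long as $x_1^2$ is retained as a relator. A lesser point is to keep the generator correspondence $y_2\leftrightarrow x_2x_1$, $y_3\leftrightarrow x_1x_3$ in the correct order, so that $v$ comes out as $[x_2x_1,x_1x_3]$ and not as a transposed or inverted variant.
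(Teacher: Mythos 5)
Your proposal is correct and follows essentially the same route as the paper: viewing $G_\tau$ as an extension of $\Z_2$ by $H_\tau$, adjoining $x_1$ with the relations $x_1^2$ and $x_1y_ix_1^{-1}=y_i^{-1}$ to the presentation of Theorem \ref{H-tau-min/thm}, and then passing to the generators $x_2=y_2x_1$, $x_3=x_1y_3$ to obtain the stated relators. Your extra care about the Tietze bookkeeping (keeping $x_1^2$ while replacing $x_1^{-1}$ by $x_1$) is sound and only makes explicit what the paper leaves implicit.
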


\begin{proof}
We think of $G_\tau$ as an extension of $\Z_2$ by $H_\tau$ and deduce the presentation of it from that of $H_\tau$ given by Theorem \ref{H-tau-min/thm} and the obvious one of $\Z_2$ in the usual way (see \cite[Sec. 10.2]{Joh97}).

As the generators we have $x_1$, corresponding to the reflection $r_1$ (a lifting to $G_\tau$ of the generator of $\Z_2$), and the generators $y_2$ and $y_3$ in the presentation of $H_\tau$, corresponding to the rotations $r_2r_1$ and $r_1r_3$, respectively.

As the relation, besides the ones in the presentation of $H_\tau$, we have
\begin{equation}\label{Gt-sqrel/eqn}
x_1^2\ ,\ x_1y_2x_1^{-1} = y_2^{-1}\ ,\ x_1y_3x_1^{-1} = y_3^{-1}\,,
\end{equation}
the first of which comes from the relation of $\Z_2$, while the others express in terms of the generators $y_2$ and $y_3$ their conjugates by $x_1$.

In the new generators $x_1,x_2,x_3$, with $x_2 = y_2x_1$ and $x_3 = x_1y_3$ corresponding to the reflection $r_2$ and $r_3$, respectively, the relations (\ref{Gt-sqrel/eqn}) reduce to $x_1^2,x_2^2,x_3^2\,$, and up to such relations those in the presentation of $H_\tau$ read as $[\mkern1mu[x_2x_1,x_1x_3],\([x_2x_1,x_1x_3]\)(x_2x_1)^n(x_1x_3)^m]$ for every $(n,m) > (0,0)$.
\end{proof}

\begin{theorem}\label{minimality/thm}
The presentations given in Theorems \ref{H-tau-min/thm} and \ref{G-tau-min/thm} are minimal.
\end{theorem}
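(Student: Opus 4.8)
We establish minimality for the presentation of $H_\tau$ in Theorem \ref{H-tau-min/thm} first, then reduce the case of $G_\tau$ to it. Write $F$ for the free group on $y_2,y_3$, put $c=[y_2,y_3]$ and $r_{n,m}=[\mkern1mu[y_2,y_3],\([y_2,y_3]\)y_2^ny_3^m\mkern1mu]$. Fixing $(n_0,m_0)>(0,0)$, we must show that $r_{n_0,m_0}$ does not lie in the normal closure $N$ in $F$ of the remaining relators $\{r_{n,m}:(n,m)>(0,0),\ (n,m)\neq(n_0,m_0)\}$. Each $r_{n,m}$ is a commutator of two elements of $F'$ (namely of $c$ and a conjugate of $c$), so all of them — and hence $N$ — lie in $F''=[F',F']$. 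The plan is to pass to the abelian group $B:=F''/[F',F'']$ and to detect the non-membership there.

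The conjugation action of $F$ makes $B$ a module over $\Lambda:=\Z[s^{\pm1},t^{\pm1}]=\Z[F/F']$ (the images of $y_2,y_3$ being $s,t$); the subgroup $F'$ acts trivially, so the action factors through $F/F'\cong\Z^2$. Since $F'$ is a free group, $B=F''/[F',F'']$ is the second lower central quotient of $F'$, canonically identified with the exterior square $\bigwedge^2_{\Z}(F'/F'')$ via $\overline{[u,v]}\leftrightarrow\bar u\wedge\bar v$, and under this identification the action of $\Z^2$ on $B$ is the exterior square of its action on $A:=F'/F''$. Now $A$ is, by the Fox-calculus description of the relation module of the abelianization (equivalently, the Koszul resolution of $\Z$ over $\Z[\Z^2]$) for a rank-$2$ free group, the free cyclic $\Lambda$-module generated by $\bar c$; fixing an isomorphism $A\cong\Lambda$ with $\bar c\mapsto 1$, the image of $r_{n,m}$ in $B=\bigwedge^2_{\Z}(\Lambda)$ becomes $\bar c\wedge(s^nt^m\bar c)=1\wedge s^nt^m$, up to a unit of $\Lambda$ (i.e. $\pm$ a basis vector with difference $(n,m)$). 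Since $\{s^it^j\}_{(i,j)\in\Z^2}$ is a $\Z$-basis of $\Lambda$, the family $\{\,s^at^b\wedge s^{a+n}t^{b+m}:(a,b)\in\Z^2,\ (n,m)>(0,0)\,\}$ is a $\Z$-basis of $B$: the image of $N$ in $B$ is the $\Z$-span of those basis vectors with $(n,m)\neq(n_0,m_0)$, whereas the image of $r_{n_0,m_0}$ is a unit multiple of $1\wedge s^{n_0}t^{m_0}$ and so does not lie in that span. Hence $r_{n_0,m_0}\notin N$, and the presentation of $H_\tau$ is minimal.

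For $G_\tau$ the relators $x_1^2,x_2^2,x_3^2$ are each irredundant: omitting $x_1^2$, the resulting group still maps onto $\Z/4$ by $x_1\mapsto1$, $x_2\mapsto2$, $x_3\mapsto2$, killing every remaining relator (the target is abelian and $[x_2x_1,x_1x_3]\mapsto0$) but not $x_1^2$; symmetrically for $x_2^2$ and $x_3^2$. For the relator $[v,\(v\)(x_2x_1)^{n_0}(x_1x_3)^{m_0}]$ with $v=[x_2x_1,x_1x_3]$, let $\widetilde G$ be the group obtained by omitting it. Since $x_1^2,x_2^2,x_3^2$ still hold, $\widetilde G$ contains the index-$2$ subgroup $\widetilde H$ generated by $y_2=x_2x_1$ and $y_3=x_1x_3$. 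Applying the Reidemeister--Schreier procedure with transversal $\{1,x_1\}$, exactly as in the derivation of Theorem \ref{G-tau-min/thm} run backwards, one obtains a presentation of $\widetilde H$ on $y_2,y_3$ whose relators are the $r_{n,m}$ together with their images under the automorphism $y_i\mapsto y_i^{-1}$, for $(n,m)>(0,0)$ with $(n,m)\neq(n_0,m_0)$; by the computation in the $H_\tau$ case all of these map in $B$ into the $\Z^2$-submodule generated by $\{1\wedge s^nt^m:(n,m)>(0,0),\ (n,m)\neq(n_0,m_0)\}$, which does not contain the unit multiple of $1\wedge s^{n_0}t^{m_0}$ representing $r_{n_0,m_0}$. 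Hence $r_{n_0,m_0}=[\mkern1mu[y_2,y_3],\([y_2,y_3]\)y_2^{n_0}y_3^{m_0}\mkern1mu]$ is nontrivial in $\widetilde H\subseteq\widetilde G$; since the omitted $G_\tau$-relator coincides with this element (under $y_2=x_2x_1$, $y_3=x_1x_3$), it is irredundant. Therefore the presentation of Theorem \ref{G-tau-min/thm} is minimal as well.

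The one step that is not bookkeeping is the module identification in the second paragraph: one must check carefully that for the rank-$2$ free group $F'/F''$ is the free cyclic $\Z[\Z^2]$-module on $\overline{[y_2,y_3]}$, and that the natural isomorphism $F''/[F',F'']\cong\bigwedge^2_{\Z}(F'/F'')$ intertwines the conjugation action with the exterior square and carries each $r_{n,m}$ to $\bar c\wedge(s^nt^m\bar c)$. Once this is secured, the conclusion is simply that distinct members of a basis of an exterior power are $\Z$-linearly independent, and the passage to $G_\tau$ is a routine Reidemeister--Schreier verification.
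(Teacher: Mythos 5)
Your proof is correct, but it takes a genuinely different route from the paper's. You work entirely upstairs in the free group $F$ on $y_2,y_3$: all relators lie in $F''$, and you detect irredundancy in the abelianized quotient $B=F''/[F',F'']$, using two standard structural facts (that $F'/F''$ is the free cyclic $\Z[\Z^2]$-module on $\overline{[y_2,y_3]}$ -- true precisely in rank $2$, via Fox calculus/Koszul -- and that $\gamma_2(F')/\gamma_3(F')\cong\bigwedge^2_{\Z}(F'/F'')$ equivariantly, since $F'$ is free), so that the conjugates of the relators map, up to sign, onto distinct members of a $\Z$-basis of $\bigwedge^2_\Z\Lambda$ indexed by pairs with prescribed index difference $(n,m)$; omitting one relator omits exactly the basis vectors of difference $(n_0,m_0)$, whence $r_{n_0,m_0}\notin N$. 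The paper instead stays elementary and combinatorial: it runs Reidemeister--Schreier down to the translation subgroup $T_\tau$ (twice, for $G_\tau$), rewrites the induced relators in generators $d_{i,j}$, and shows that deleting one relator permits a homomorphism onto $\Sigma_3$ sending $d_{0,0}$ and $d_{n_0,m_0}$ to non-commuting transpositions, contradicting commutativity of $T_\tau$; in spirit both arguments exploit the same index-difference bookkeeping, but yours buys a cleaner conceptual statement at the price of quoting free-Lie-algebra/Magnus-embedding facts, while the paper's stays self-contained. Two further remarks. First, for $G_\tau$ your Reidemeister--Schreier step is slightly misstated: the second family of induced relators is $[\mkern1mu[y_2^{-1},y_3^{-1}],\([y_2^{-1},y_3^{-1}]\)y_2^ny_3^m\mkern1mu]$ (conjugation by $y_2^ny_3^m$, not by $y_2^{-n}y_3^{-m}$, so these are not literally the $y_i\mapsto y_i^{-1}$ images of the $r_{n,m}$); this is harmless, since in $B$ such a relator is still $\pm$ a basis vector of index difference $(n,m)$ (namely $st\wedge s^{n+1}t^{m+1}$), so your span argument goes through unchanged. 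Second, your explicit $\Z/4$ quotients showing that $x_1^2,x_2^2,x_3^2$ are individually irredundant address a point the paper's proof leaves implicit, which is a welcome addition.
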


\begin{proof}
We first prove the minimality of the presentation of $H_\tau$, then we see how the argument can be adapted to prove the minimality of the presentation of $G_\tau$. In both cases, the idea is to apply the Reidemeister-Schreier method (see \cite[Sec. 9.1]{Joh97}) to obtain a presentation of the subgroup $T_\tau$ induced by the presentation of $H_\tau$ (resp. $G_\tau$), and show that if any single relation is removed from this last presentation, then the presentation induced on $T_\tau$ would give a non commutative group.

We start with the presentation of $H_\tau$ in Theorem \ref{H-tau-min/thm}, and we choose as a Schreier transversal for $T_\tau \subset H_\tau$ the set $\{y_2^iy_3^j\,,\,i,j \in \Z\},$ where $y_2^iy_3^j$ corresponds to the lifting $(r_2r_1)^i(r_1r_3)^j \in H_\tau$ of the generic rotation $(s_2s_1)^i(s_1s_3)^j \in R_\tau$ in the exact sequence (\ref{Ht-exact/eqn}).

According to this choice, taking into account that $\lambda_\tau(y_2^iy_3^j y_2) = \lambda_\tau(y_2^{i+1}y_3^j)$, being $R_\tau \subset \text{SO}(2)$ abelian, and that $y_2^iy_3^j y_3 = y_2^iy_3^{j+1}$, a set of generators for $T_\tau$ is
\begin{equation}
\{b_{i,j} = y_2^iy_3^j y_2y_3^{-j}y_2^{-i-1} = [y_2^iy_3^j,y_2]\,,\,i,j \in \Z\,,j\neq 0\}\,.
\end{equation}
Moreover, the relations are the transcription in the $b_{i,j}$'s of the words
\begin{equation}\label{bnmkl/eqn}
c_{n,m,k,\ell}=[\mkern1mu\([y_2,y_3]\)y_3^\ell y_2^k,\([y_2,y_3]\)y_2^ny_3^m y_3^\ell y_2^k\mkern1mu]\,,
\end{equation}
with $n,m,k,\ell \in \Z$ and $(n,m) > (0,0)$.

In order to carry out the transcription, we observe that
\begin{equation}\label{y2iy3j/eqn}
[y_2^i,y_3^j] = \cases{\!b_{i,j}b_{i+1,j}\dots b_{-1,j}\hfill \text{ if } i \leq 0\cr 
\!(b_{0,j} b_{1,j} \dots b_{i-1,j})^{-1} \hfill \text{ if } i \geq 0}\,,
\end{equation}
where we put $b_{i,0} = 1$ for every $i \in \Z\,$.
In fact, a part from the trivial case of $i = 0$, for $i=\pm1$ we immediately have $[y_2,y_3^j] = b_{0,j}^{-1}$ and $[y_2^{-1},y_3^j] = b_{-1,j}$. 
Then, decreasing induction on $i \leq -1$ gives
$$[y_2^i,y_3^j] = y_2^iy_3^j y_2 y_3^{-j}y_2^{-i} y_2^{-1} y_2^{i+1}y_3^j y_2^{-i-1}y_3^{-j} =
b_{i,j}[y_2^{i+1},y_3^j] = b_{i,j}b_{i+1,j}\dots b_{-1,j}\,,$$
while increasing induction on $i \geq 1$ gives
$$[y_2^i,y_3^j] = y_2 y_2^{i-1}y_3^j y_2^{-1} y_3^{-j}y_2^{-i+1}y_2^{i-1}y_3^j y_2^{-i+1}y_3^{-j} =
b_{i-1,j}^{-1}[y_2^{i-1},y_3^j] = b_{i-1,j}^{-1}b_{i-2,j}^{-1}\dots b_{0,j}^{-1}\,.$$

Now, direct inspection shows that
$$\([y_2,y_3]\)y_3^\ell y_2^k = [y_2^{-k},y_3^{-\ell}][y_3^{-\ell},y_2^{-k+1}][y_2^{-k+1},y_3^{-\ell+1}][y_3^{-\ell+1},y_2^{-k}]\,,$$
and after performing the replacements (\ref{y2iy3j/eqn}), separately for the two cases $k \leq 0$ and $k \geq 1$, we get in both cases
\begin{equation}\label{T-tau-rel1/eqn}
\([y_2,y_3]\)y_3^\ell y_2^k = b_{-k,-\ell} b_{-k,-\ell+1}^{-1}\,.
\end{equation}
Analogously, direct inspection shows that
$$
\begin{array}{rcl}\vrule width0pt depth0pt height12pt
\([y_2,y_3]\)y_2^ny_3^my_3^\ell y_2^k &=& 
[y_2^{-k},y_3^{-m-\ell}][y_3^{-m-\ell},y_2^{-k-n+1}][y_2^{-k-n+1},y_3^{-m-\ell+1}]\cr
&& {\ } \cdot 
[y_3^{-m-\ell+1},y_2^{-k-n}][y_2^{-k-n},y_3^{-m-\ell}][y_3^{-m-\ell},y_2^{-k}]\,,
\end{array}
$$
and after performing the replacements (\ref{y2iy3j/eqn}), separately for the two cases $k \leq 0$ and $k \geq 1$ if $n = 0$ and for the three cases $k \leq -n\,, -n < k < 0$ and $k \geq 0$ if $n > 0$, we get in all cases
\begin{equation}\label{T-tau-rel2/eqn}
\([y_2,y_3]\)y_2^ny_3^my_3^\ell y_2^k = \(b_{-n-k,-m-\ell}b_{-n-k,-m-\ell+1}^{-1}\)
\mathop{\raise-3pt\hbox{\Large$\Pi$}}\nolimits_{i=0}^{n-1}\! b_{-n-k+i,-m-\ell}\,.
\end{equation}
Based on (\ref{T-tau-rel1/eqn}) and (\ref{T-tau-rel2/eqn}), we have the transcription
\begin{equation}\label{bnmkl-trans/eqn}
c_{n,m,k,\ell} = 
[\,c_{-k,-\ell} c_{-k,-\ell+1}^{-1}\,,\,\(b_{-n-k,-m-\ell}b_{-n-k,-m-\ell+1}^{-1}\)
\mathop{\raise-3pt\hbox{\Large$\Pi$}}\nolimits_{i=0}^{n-1}\! b_{-n-k+i,-m-\ell}\,]\,,
\end{equation}\label{T-tau-rel/eqn}
for every $n,m,k,\ell \in \Z$ and $(n,m) > (0,0)$.

At this point, we consider the new set of generators for $T_\tau$
\begin{equation}\label{dij/eqn}
\{d_{i,j} = b_{-i,-j} b_{-i,-j+1}^{-1}\,,\,i,j \in \Z\}\,,
\end{equation}
where $b_{i,0} = 1$ for every $i \in \Z$, and hence $d_{i,1} = b_{-i,-1}$ and $d_{i,0} = b_{-i,1}^{-1}$. Starting from the last two equalities, and proceeding by decreasing induction on $j \leq -1$ and by increasing induction on $j \geq 1$, we obtain
\begin{equation}\nonumber
b_{i,j} = \cases{\!d_{-i,-j}d_{-i,-j-1}\dots d_{-i,1}\hfill \text{ if } j \leq 0\cr 
\!(d_{-i,0}d_{-i,-1} \dots d_{-i,-j+1})^{-1} \hfill \text{ if } j \geq 0}\,,
\end{equation}
with $b_{i,0}$ corresponding to the empty word in the $d_{i,j}$'s in both cases.

Then, by performing these replacements in the equation (\ref{bnmkl-trans/eqn}), we obtain as the set of relations for $T_\tau$ in the generators $d_{i,j}$
\begin{equation}\label{dnmkl/eqn}
\{e_{n,m,k,\ell} = [d_{k,\ell},\(d_{n+k,m+\ell}\)u_{n,m,k,\ell}]\,,\, n,m,k,\ell \in \Z\,,
(n,m)>(0,0)\}\,,
\end{equation}
where $u_{n,m,k,\ell}$ is a certain word in the $d_{i,j}$'s.

Now, assume by contradiction that a single relation $[\mkern1mu[y_2,y_3],\([y_2,y_3]\)y_2^{n_0}y_3^{m_0}]$ with $(m_0,n_0) > (0,0)$ can be removed from the presentation of $H_\tau$, in such a way that we still have a presentation of $H_\tau$. Then, in the set of relations (\ref{dnmkl/eqn}) for the induced presentation of $T_\tau$ with generators $d_{i,j}$, all the relations $e_{n_0,m_0,k,\ell}$ with $k,\ell \in \Z$ are omitted.
This allows us to define a homomorphism $T_\tau \to \Sigma_3$ that sends $d_{0,0}$ and $d_{n_0,m_0}$ to the transpositions $(1\,2)$ and $(2\,3)$, respectively, and any other $d_{i,j}$ to the identity, in contrast with the fact that $T_\tau$ is abelian. In fact, by replacing the generators by the corresponding transpositions in the relation $e_{n,m,k,\ell}$, we always get the identity if at least one of $(k,\ell)$ and $(n+k,m+\ell)$ does not coincide with $(0,0)$ or $(n_0,m_0)\,$. Therefore, being $(n,m),(n_0,m_0)>(0,0)\,$, the only possibility for not having the identity is $(k,\ell) = (0,0)$ and $(n,m) = (n_0,m_0)$. But this cannot happen since the relation $e_{n_0,m_0,0,0}$ is missing.

In order to prove the minimality of the presentation of $G_\tau$ in Theorem \ref{G-tau-min/thm}, we first apply the Reidemeister-Schreier method to derive from it a presentation of the subgroup $H_\tau \subset G_\tau$ with Schreier transversal $\{1, x_1\}$. Arguing as in the proof of Theorem \ref{H-tau/thm}, with $v$ and $\(v\)(x_2x_1)^n(x_1x_3)^m$ in place of $w$ and $\(w\)(x_1x_2)^n(x_1x_3)^m$, respectively, we obtain for $H_\tau$ the presentation
\begin{equation}\nonumber
\langle y_2,y_3\,|\,[\mkern1mu[y_2,y_3],\([y_2,y_3]\)y_2^ny_3^m\mkern1mu],[\mkern1mu[y_2^{-1},y_3^{-1}],\([y_2^{-1},y_3^{-1}]\)y_2^ny_3^m\mkern1mu],\, n,m \in \Z\,,(n,m)>(0,0)\rangle\,.
\end{equation}
Then, we perform once again the Reidemeister-Schreier method on this presentation to get a presentation of the subgroup $T_\tau \subset H_\tau$ with Schreier transversal $\{y_2^iy_3^j\,,\,i,j \in \Z\}$. Similar computations as in the first part of this proof leads a presentation having the same set of generators (\ref{dij/eqn}) and relations
\begin{equation}\label{ddnmkl/eqn}
\begin{array}{c}\vrule width0pt depth0pt height6pt
e_{n,m,k,\ell} = [d_{k,\ell},\(d_{n+k,m+\ell}\)u_{n,m,k,\ell}\,]\,,\\[6pt]
e'_{n,m,k,\ell} = [\(d_{k+1,\ell+1}\)u'_{k,\ell},\(d_{-n+k+1,-m+\ell+1}\)u''_{n,m,k,\ell}\,]\,,
\end{array}
\end{equation}
where $e_{n,m,k,\ell}$ is as in (\ref{dnmkl/eqn}), while $u'_{k,\ell}$ and $u''_{n,m,k,\ell}$ are suitable words in the $d_{i,j}$'s.

At this point the minimality of the presentation of $G_\tau$ can be deduced by the same argument
used above for $H_\tau$, based on the equality between the differences of indices $(k+1,\ell+1) - (-n+k+1,-m+\ell+1) = (n+k,m+\ell) - (k,\ell) = (n,m)\,$ in (\ref{ddnmkl/eqn}).
\end{proof}



\section{Examples of non-generic relations\label{examples/sec}}

As we have seen in the previous sections, apart from the obvious involutive property of the $r_i$'s, the only generic relations in $G_\tau$, that is the ones holding for $\tau$ a generic triangle or equivalently for every triangle $\tau$ (by Lemma \ref{generic/thm}), are the commutators of the translations in the free abelian subgroup $T_\tau$ generated by the conjugates of $t_1$.

Here, we briefly discuss the existence of extra non-generic relations for the subgroup $T_\tau$, and hence for the group $G_\tau$, in the case when the triangle $\tau$ is typical but not generic. In this respect, typical triangles are expected to present a rich unexplored structure, in some sense complementary to the one encoded by the relations $(r_1r_2)^{n_3}, (r_2r_3)^{n_1}$ and $(r_3r_1)^{n_2}$ for a rational triangle $\tau$ having angles $m_i \pi/n_i$ with $(m_i,n_i) = 1$, which has been widely considered in the literature after the pioneering work of Coxeter \cite{Co34}.

\begin{Figure}[b]{figure3/fig}
\vskip6pt
\fig{}{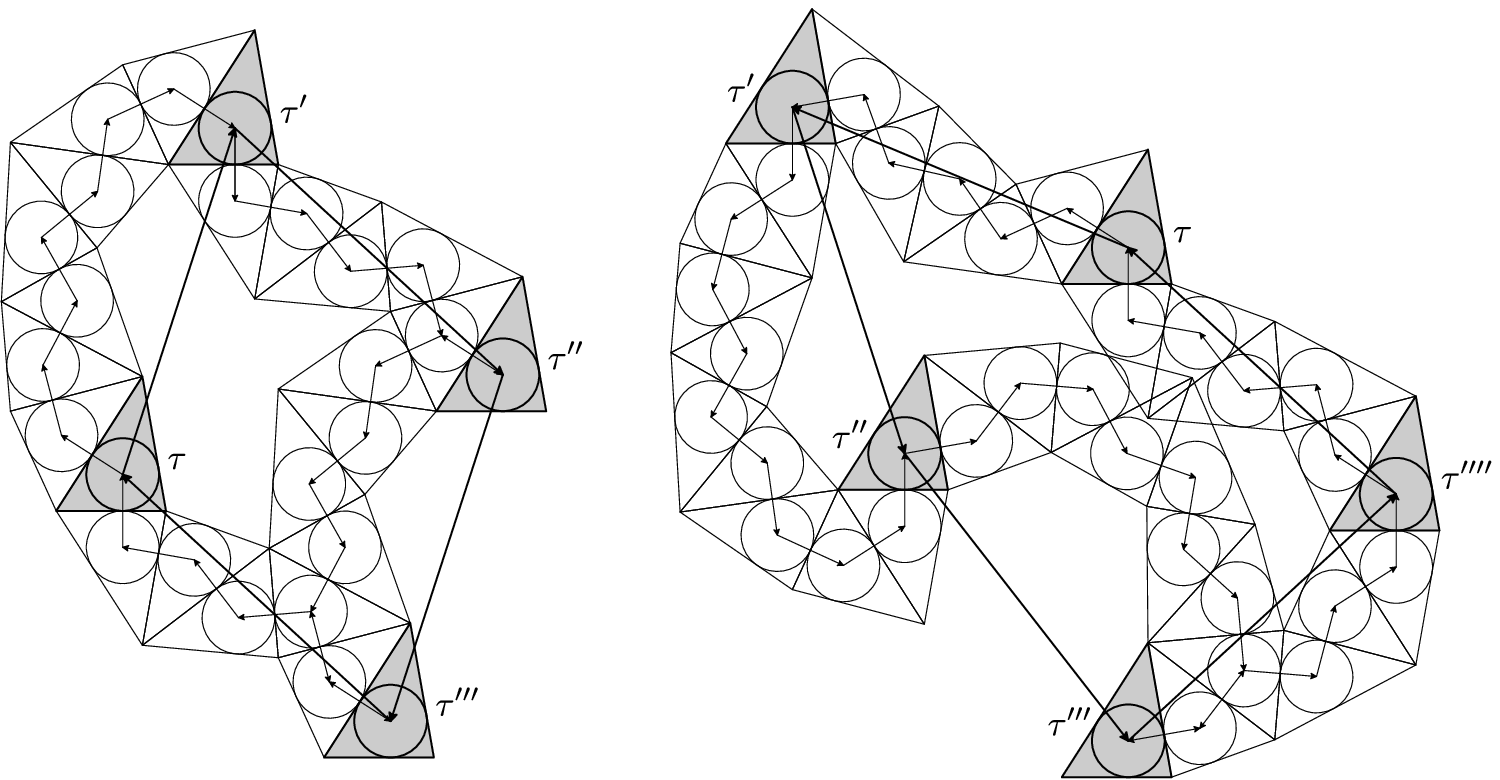}
    {}{Generic and non-generic relations}
\end{Figure}

In Figure \ref{figure3/fig} two relations of $G_\tau$ are represented in terms of the corresponding chain of triangles generated by each next reflection in the word, starting from $\tau$ and ending back to $\tau$. Namely, on the left side there is the generic relation given by the commutator
\begin{equation}\nonumber
\begin{array}{rcl}\vrule width0pt depth0pt height12pt
[t_1,\(t_1\)r_1r_3]
&=& t_1r_3r_1t_1r_1r_3t_1^{-1}r_3r_1t_1^{-1}r_1r_3\\[2pt]
&=& r_1r_2r_3r_1r_3r_1r_2r_3r_1r_2r_1r_3r_2r_1r_3r_1r_3r_2r_1r_3r_2r_3\,,
\end{array}
\end{equation}
where some $r_i^2$ has been canceled in the last expression, while on the right side there is the non-generic relation
\begin{equation}\nonumber
\begin{array}{l}\vrule width0pt depth0pt height12pt
t_1 \cdot \(t_1^{-1}\)r_1 \cdot \(t_1\)r_1r_3r_2 \cdot \(t_1^{-1}\)r_1r_3r_1 
\cdot \(t_1^{-1}\)r_3\\[2pt]
\kern20pt = r_1r_2r_3r_1r_2r_3r_1r_3r_2r_1r_2r_3r_1r_2r_3r_1r_3
r_2r_1r_3r_1r_3r_2r_1r_3r_2r_3r_1r_2r_1r_3r_2r_1r_3\,,
\end{array}
\end{equation}
which holds only for the triangles $\tau$ whose angles $\alpha_i$ satisfy a specific condition (in particular for all the triangles such that $2 \cos(2\alpha_2 + 2\alpha_3) - 2 \cos 2\alpha_2 = 1$).

The big vectors superposed to the chains of triangles in the figure correspond to the expression of the relation as a word in the set $C(t_1)$ of generators of the translation subgroup $T_\tau$, while the small vectors indicate the displacement of the incenter of the triangle under the action of each next reflection in the expression of the relation as a word in the generators $r_1,r_2$ and $r_3$ of $G_\tau$. The lengths of these two word representations of a relation, in the generators of $T_\tau$ and $G_\tau$ respectively, provide relatively independent measures of its complexity. 

The following table reports the number of stable words in the generators $r_1,r_2$ and $r_3$ of $G_\tau$ up to length 24, which are cyclically reduced with respect to the cancellation of the $r_i^2$'s, and pairwise distinct up to permutation of indices, inversion, conjugation and commutation of stable words. These have been obtained by a computer procedure in three steps: first, the generation of a complete list of all the cyclically reduced stable words of a given length; then, the elimination of duplicates up to permutation of indices, inversion and cyclic permutations of the word; finally, the detection of the remaining pairs of words (even of different lengths) equivalent up to conjugation and commutation of stable words, by comparing their expressions as linear combinations of vectors in $C(t_1)$. The total number of words of each length from 6 to 24 with respect to the generators $r_1,r_2$ and $r_3$ of $G_\tau$ in the last column, is subdivided in the previous columns according to the length from 1 to 12 with respect to the set $C(t_1)$ of generators of $T_\tau$.

\bigskip
\centerline{\includegraphics{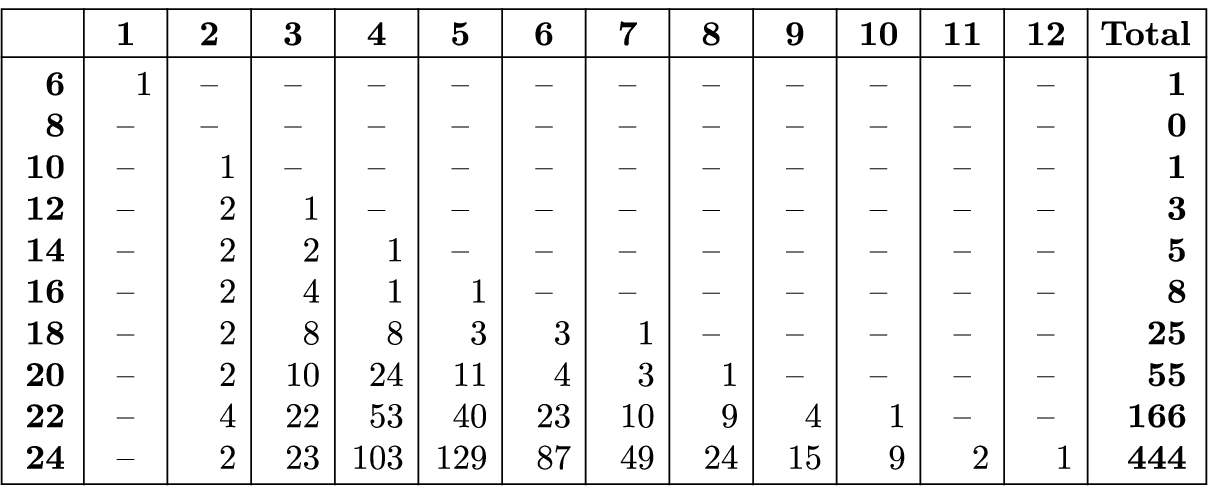}}
\bigskip

Now, in order to determine when a cyclically reduced stable word $r_{i_1} r_{i_2}\dots r_{i_n}$ represents the identity in $T_\tau$, one could directly observe that, according to Lemma \ref{stable/thm} and equation (\ref{ri/eqn}), this happens if and only if
\begin{equation}\label{rel1/eqn}
\textstyle\sum_{j=1}^n (v_{i_j})s_{i_{j+1}} \dots s_{i_n} = 0\,.
\end{equation}
Notice that the $j$-th term of this summation coincides with half the displacement vector of the incenter of the triangle under the action of the $j$-th reflection in $r_{i_1} r_{i_2}\dots r_{i_n}$.
Taking into account that all these vectors have the same norm, equation (\ref{phi/eqn}) could be applied to rewrite equation (\ref{rel1/eqn}) as a condition on the angles $\alpha_2$ and $\alpha_3$ of $\tau$ under which $r_{i_1} r_{i_2} \dots r_{i_n}$ is a relation for $T_\tau$.

A more convenient approach to the same condition on the angles of $T_\tau$ is provided by the proof of Theorem \ref{T-tau/thm}. Once the translation vector corresponding to the word $r_{i_1} r_{i_2} \dots r_{i_n}$ has been expressed as a linear combination of vectors in $C(t_1)$, equation\break (\ref{rel1/eqn}) can be put in the form
\begin{equation}\nonumber
\textstyle\sum_{j=1}^m k_j\,\(t_1\)r_{i_{j,1}} r_{i_{j,2}} \dots r_{i_{j,n_j}} = 0\,.
\end{equation}
Then, according to equations (\ref{ang1/eqn}) and (\ref{ang2/eqn}), we get the equivalent system
\begin{equation}\label{sys/eqn}
\left\{
\begin{array}{l}
\textstyle\sum_{j=1}^m k_j \cos(m_{j,2} \alpha_2 + m_{j,3}\alpha_3) = 0\\[6pt]
\textstyle\sum_{j=1}^m k_j \sin(m_{j,2} \alpha_2 + m_{j,3}\alpha_3) = 0
\end{array}
\right.,
\end{equation}
where $m_{j,2} \alpha_2 + m_{j,3}\alpha_3$ is the oriented angle from $t_1$ to $(t_1)r_{i_{j,1}} r_{i_{j,2}} \dots r_{i_{j,n_j}}$.

By a systematic computer search among the stable words up to length 24 generated as said above, we found that the shortest words in the $r_i$'s giving non-generic relations for some typical triangle have length 18. Up to permutation of indices, inversion, conjugation and commutation of stable
words, there are two of such words of length 18. As discussed in the Examples \ref{relation1/exp} and \ref{relation2/exp} below, both the relations hold for a continuous family of triangles, forming a curve in the space of parameters
\begin{equation}\nonumber
\T = \{(\alpha_2,\alpha_3) \;|\; \alpha_2,\alpha_3 > 0 \text{ and } \alpha_2 + \alpha_3 < \pi\}\,,
\end{equation}
and almost all the triangles in that family are typical. Moreover, the relation presented in Example \ref{relation1/exp} has minimal length also with respect to the set generators $C(t_1)$, being not difficult to see that, apart from commutators, any extra relation holding in $T_\tau$ for a typical triangle $\tau$ must have length at least 5 in terms of conjugates of $t_1$.

\begin{example}\label{relation1/exp}
Consider the stable word of length 18
\begin{equation}\nonumber
(r_1r_2r_3r_2r_3r_1r_2r_1r_3)^2
= \(t_1\)r_1r_3r_2r_1 \cdot t_1 \cdot \(t_1\)r_3r_1r_3r_2r_3 \cdot \(t_1\)r_1r_3 \cdot \(t_1\)r_3r_1r_3\,.
\end{equation}
The corresponding translation vector is 
\begin{equation}\label{rel2/eqn}
\begin{array}{l}\vrule width0pt depth0pt height12pt
t_1  + \(t_1\)r_1r_3 + \(t_1\)r_1r_3r_2r_1 + \(t_1\)r_3r_1r_3 + \(t_1\)r_3r_1r_3r_2r_3\\[2pt]
\kern20pt = t_1 + \(t_1\)r_1r_3 + \(t_1\)r_2r_3 + \(t_1\)r_1r_2r_1r_3 + \(t_1\)r_1r_3r_1r_3\,,\\[2pt]
\end{array}
\end{equation}
where the semplification is based on the equation (\ref{conjugate/eqn}), the commutativity of the rotations $s_is_j$ and $s_ks_l$, and the identities $\(t_1\)r_1r_2r_3=t_1$ and $s_i^2 = 1$. 
Now,  the oriented angles from $t_1$ to the five vectors in (\ref{rel2/eqn}) are respectively given by $0, 2(\alpha_2 + \alpha_3), 2\alpha_2, -2(\alpha_2 - \alpha_3)$ and $4\alpha_2$, and by replacing in (\ref{sys/eqn}) we get the system
\begin{equation}\nonumber
\left\{
\begin{array}{l}
\cos2(\alpha_2 + \alpha_3) + \cos2\alpha_2 + \cos2(\alpha_2 - \alpha_3) + \cos4\alpha_2 = -1\\[2pt]
\sin2(\alpha_2 + \alpha_3) + \sin2\alpha_2 - \sin2(\alpha_2 - \alpha_3) + \sin4\alpha_2 = 0
\end{array}
\right..
\end{equation}
By standard trigonometric identities, this system is equivalent to
\begin{equation}\nonumber
\left\{
\begin{array}{l}
(1 + 2\cos 2\alpha_2 + 2 \cos 2\alpha_3)\cos 2\alpha_2 = 0\\[2pt]
(1 + 2\cos 2\alpha_2 + 2 \cos 2\alpha_3)\sin 2\alpha_2 = 0
\end{array}
\right.,
\end{equation}
hence to the equation
\begin{equation}\nonumber
1 + 2\cos 2\alpha_2 + 2 \cos 2\alpha_3 = 0\,.
\end{equation}

The curve solutions of this equation in the space of parameters $\T$ is plotted on the left side of Figure \ref{figure4/fig}. Since non-typical triangles form a dense countable union of straight lines in $\T$, it is clear that only countably many triangles along the curve are non-typical. A very special case is represented by the triangle $d$ in the figure, whose angles are all rational multiples of $\pi$.
Hence, we can conclude that the considered word is a relation in $T_\tau$ for uncountably many typical non-generic triangles $\tau$, which form a dense subset of the curve. A sample of them is given by the five triangles $a,b,c,e,f$ depicted in the figure.
\end{example}

\begin{Figure}[htb]{figure4/fig}
\fig{}{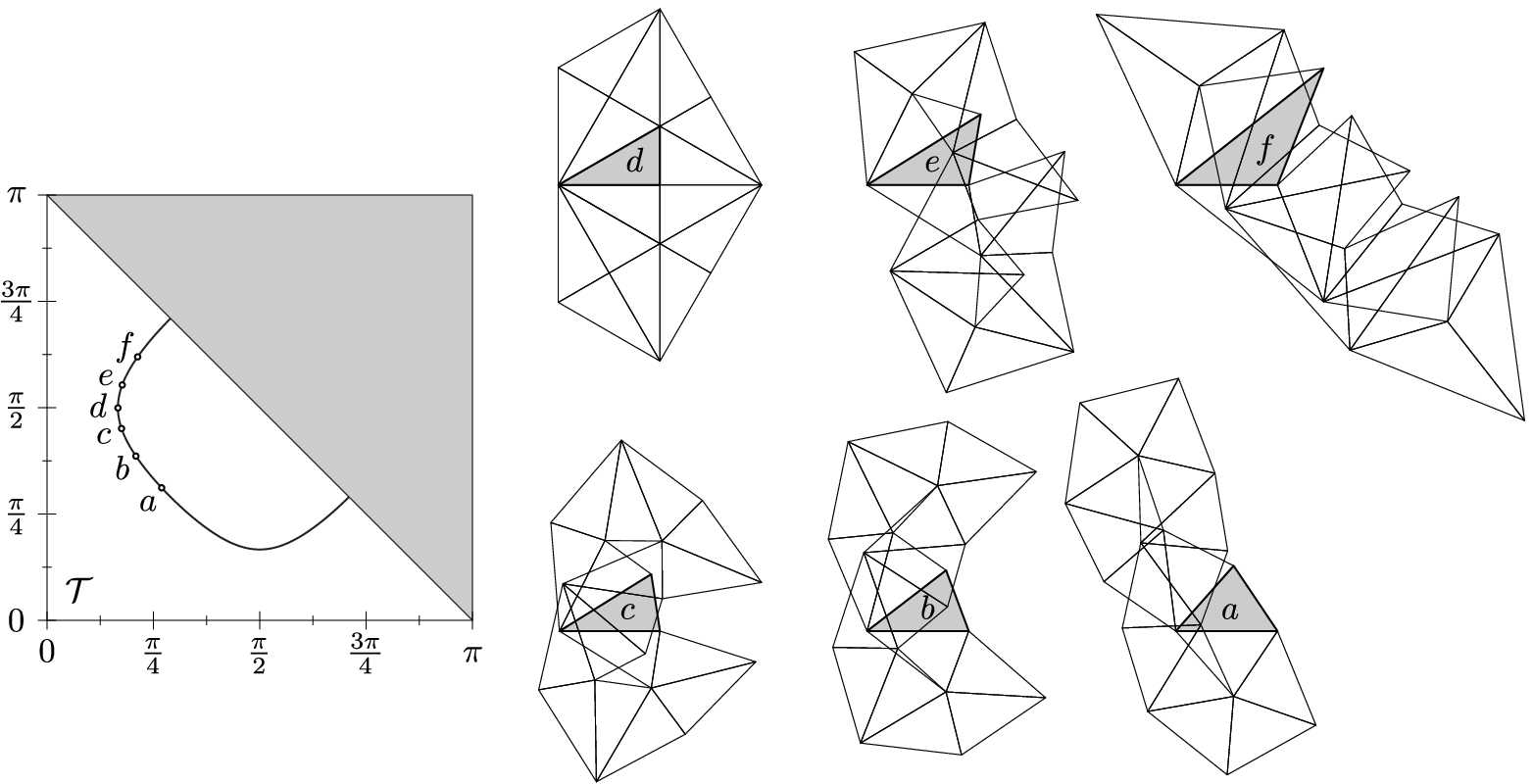}
    {}{The non-generic relation of Example \ref{relation1/exp}}
\end{Figure}

\begin{example}\label{relation2/exp}
Arguing as above, we see that the stable word of length 18
\begin{equation}\nonumber\kern-5pt
\begin{array}{l}\vrule width0pt depth0pt height12pt
(r_1r_2r_3r_2r_3r_1r_3r_1r_2)^2\\[2pt]
\kern20pt 
= \(t_1\)r_1r_3r_2r_1 \cdot t_1 \cdot \(t_1\)r_1r_3r_2r_3 \cdot \(t_1\)r_3r_1r_3r_2r_3 \cdot \(t_1\)r_1r_3 \cdot \(t_1\)r_3 \cdot \(t_1\)r_1r_3r_1r_2
\end{array}
\end{equation}
corresponds to the translation vector
\begin{equation}\nonumber
\begin{array}{l}\vrule width0pt depth0pt height12pt
t_1 + \(t_1\)r_3 + \(t_1\)r_1r_3 + \(t_1\)r_1r_3r_2r_1 + \(t_1\)r_1r_3r_1r_2 + \(t_1\)r_1r_3r_2r_3 + \(t_1\)r_3r_1r_3r_2r_3\\[2pt]
\kern20pt = t_1 + \(t_1\)r_1r_2 + \(t_1\)r_1r_3 + \(t_1\)r_2r_3 + \(t_1\)r_1r_3r_1r_2 + \(t_1\)r_1r_3r_2r_3 + \(t_1\)r_1r_3r_1r_3\,.
\end{array}\kern-5pt
\end{equation}
This leads to the system
\begin{equation}\nonumber
\left\{
\begin{array}{l}
(1 + 2\cos 2\alpha_2 + 2 \cos 2\alpha_3 + 2\cos2(\alpha_2+\alpha_3))\cos 2\alpha_2 = 0\\[2pt]
(1 + 2\cos 2\alpha_2 + 2 \cos 2\alpha_3 + 2\cos2(\alpha_2+\alpha_3))\sin 2\alpha_2 = 0
\end{array}
\right.,
\end{equation}
hence to the equation
\begin{equation}\nonumber
1 + 2\cos 2\alpha_2 + 2 \cos 2\alpha_3 + 2\cos2(\alpha_2 + \alpha_3) = 0\,.
\end{equation}
Hence, also in this case we can conclude that the considered word is a relation in $T_\tau$ for uncountably many typical non-generic triangles $\tau$, which form a dense subset of the curve represented by the equation.
\end{example}

Besides the two relations of length 18 given in the previous examples, our computer search also detected other non-generic relations holding for all the triangles along a curve in the parameter space $\T$, hence for uncountably many typical triangles. Namely, we there are 6 such relations of length 22 and 5 of length 24, but none of length 20.

Moreover, we found a certain number of non-generic relation holding only for isolated typical triangles. One of such relations is discussed in Example \ref{relation3/exp}.

Actually, systematic search produced even shorter relations holding in isolated triangles, which present strong evidence of being typical. But we were not able to prove that such triangles are really typical. The shortest one has length 22, and it is the unique that length, up to permutation of indices, inversion, conjugation and commutation of stable words. Up to the same equivalence, there are also 20 similar relations of length 24, some of which have the minimal length 5 with respect to $C(t_1)$. Such further relations in conjecturally typical triangles are illustrated by Example \ref{relation4/exp}.

\begin{example}\label{relation3/exp}
Consider the stable word of length 32
\begin{equation}\nonumber
\begin{array}{l}\vrule width0pt depth0pt height12pt
r_1r_3r_1r_2r_3r_2r_3r_1r_2r_3r_2r_3r_1r_2r_3r_1r_2r_3r_1r_2r_3r_2r_3r_1r_3r_1r_2r_1r_2r_3r_1r_3\\[2pt]
\kern20pt = \(t_1\)r_3r_1 \cdot \(t_1\)r_2r_1 \cdot \(t_1^2\)r_3r_1 \cdot \(t_1\)r_2r_1 \cdot \(t_1\)r_1r_3r_2r_1 \cdot t_1 \cdot \(t_1\)r_1r_3\,,
\end{array}
\end{equation}
whose corresponding translation vector is
\begin{equation}\nonumber
t_1 + \(t_1\)r_1r_3 + 2\(t_1\)r_2r_1 + 3\(t_1\)r_3r_1 + \(t_1\)r_2r_3\,.
\end{equation}
Proceeding as in the previous examples, we see that this word represents the identity in $T_\tau$ if and only if the angles $\alpha_2$ and $\alpha_3$ satisfy the system
\begin{equation}\nonumber
\left\{
\begin{array}{l}
4\cos 2\alpha_2 + 2 \cos 2\alpha_3 + \cos 2(\alpha_2+\alpha_3)= -1\\[2pt]
2\cos (\alpha_2+\alpha_3) \left(\sin (\alpha_2 + \alpha_3) - 2\sin(\alpha_2 -\alpha_3)\strut\right) = 0
\end{array}
\right..
\end{equation}\nonumber
\end{example}
Apart from the rational (mod $\pi$) solution $\alpha_2 = \alpha_3 = \pi/4$, the only other acceptable solution is
\begin{equation}\nonumber
\left\{\vrule height20pt width0pt\right.\!
\begin{array}{l}
\alpha_2 = \arctan \sqrt2\\[2pt]
\alpha_3 = \arctan \displaystyle\frac{\sqrt2}{3}
\end{array}\,.
\end{equation}
According to Theorem 2 of \cite{CoRaSa99}, this solution can be written in the form
\begin{equation}\nonumber
\left\{
\begin{array}{l}
\alpha_2 = q\pi \pm \langle3\rangle_2\\[2pt]
\alpha_3 = q'\pi \pm \langle11\rangle_2
\end{array}\right.,
\end{equation}
for certain rational numbers $q$ and $q'$, and certain angles $\langle 3\rangle_2$ and $\langle 11\rangle_2$ that are rationally independent together with $\pi$. This implies that the triangle is typical. The chains of reflections realizing the relation for such triangle is shown in Figure \ref{figure5/fig}.

\begin{Figure}[htb]{figure5/fig}
\fig{}{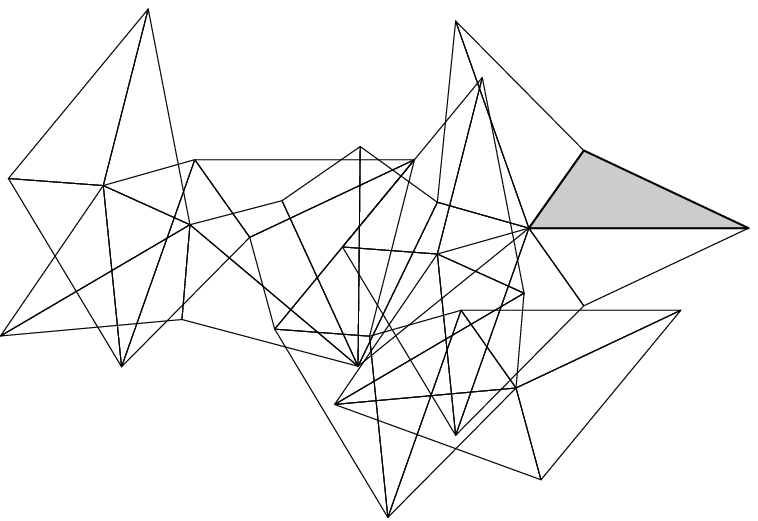}
    {}{The non-generic relation of Example \ref{relation3/exp}}
\end{Figure}

\begin{example}\label{relation4/exp}
The stable word of length 22
\begin{equation}\nonumber
\begin{array}{l}\vrule width0pt depth0pt height12pt
r_1r_2r_1r_2r_3r_1r_3r_1r_3r_2r_3r_1r_2r_3r_2r_3r_2r_3r_2r_3r_1r_2\\[2pt]
\kern20pt = \(t_1\)r_2r_1 \cdot \(t_1\)r_1r_3r_2r_1 \cdot t_1 \cdot \(t_1\)r_1r_3 \cdot \(t_1\)r_2r_3r_1r_3 \cdot \(t_1\)r_2r_3r_2r_3r_1r_3 \cdot \(t_1\)r_3
\end{array}
\end{equation}
represents the identity in $T_\tau$ if and only if
\begin{equation}\nonumber
\left\{
\begin{array}{l}
\cos 2\alpha_2 + 2 \cos 2\alpha_3 + \cos 2(\alpha_2+\alpha_3) + \cos 2(2\alpha_2+\alpha_3) \cos 2(3\alpha_2+2\alpha_3) = -1\\[2pt]
\sin 2\alpha_2 + \sin 2(\alpha_2+\alpha_3) + \sin 2(2\alpha_2+\alpha_3) \sin 2(3\alpha_2+2\alpha_3) = 0
\end{array}
\right..
\end{equation}

\noindent
The only two acceptable approximate solutions of the system are
\begin{equation}\nonumber
\left\{
\begin{array}{l}
\alpha_2 = 0.3675592642\,\pi\\[2pt]
\alpha_3 = 0.1932064551\,\pi
\end{array}
\right.
\ \ \text{and} \ \ 
\left\{
\begin{array}{l}
\alpha_2 = 0.5971477967\,\pi\\[2pt]
\alpha_3 = 0.2299624978\,\pi
\end{array}
\right..
\end{equation}
Analogously, the stable word of length 24
\begin{equation}\nonumber
\begin{array}{l}\vrule width0pt depth0pt height12pt
r_1r_2r_1r_2r_1r_3r_1r_3r_2r_1r_3r_1r_2r_1r_3r_2r_3r_2r_3r_1r_2r_3r_2r_3\\[2pt]
\kern20pt = \(t_1^{-1}\)r_3r_1 \cdot \(t_1^{-1}\)r_2r_1r_3r_1 \cdot \(t_1^{-1}\)r_3r_1r_2r_1r_3r_1 \cdot t_1 \cdot \(t_1\)r_2r_3
\end{array}
\end{equation}
represents the identity in $T_\tau$ if and only if
\begin{equation}\nonumber
\left\{
\begin{array}{l}
\cos 2\alpha_2 - \cos 2(\alpha_2+\alpha_3) + \cos 2(\alpha_2-\alpha_3) + \cos 2(2\alpha_2-\alpha_3) = 1\\[2pt]
\sin 2\alpha_2 + \sin 2(\alpha_2+\alpha_3) + \sin 2(\alpha_2-\alpha_3) \sin 2(2\alpha_2-\alpha_3) = 0
\end{array}
\right..
\end{equation}
The only acceptable approximate solution of the system is
\begin{equation}\nonumber
\left\{
\begin{array}{l}
\alpha_2 = 0.2961623095\,\pi\\[2pt]
\alpha_3 = 0.4392394514\,\pi
\end{array}
\right..
\end{equation}
The chains of reflections realizing both the non-generic relations above are shown in Figure \ref{figure6/fig}, on the left side for the two triangles where the former relation holds and on the right side for the unique triangle where the latter holds.
\end{example}

\begin{Figure}[htb]{figure6/fig}
\fig{}{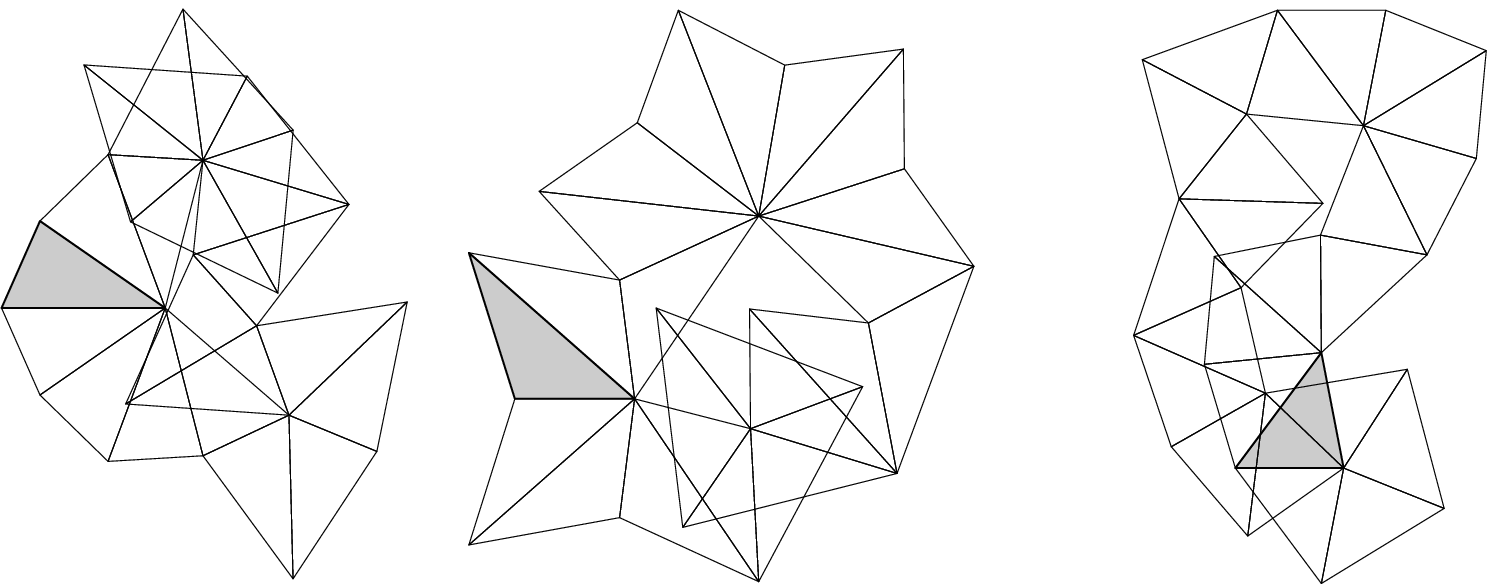}
    {}{The non-generic relations of Example \ref{relation4/exp}}
\end{Figure}




\begin{thebibliography}{10}

\bibitem{BGH13} M.G. Benli, R. Grigorchuk and P. de la Harpe, \textsl{Amenable groups without finitely presented amenable covers}, Bull. Math. Sci. 3 (2013), 73-131.

\bibitem{BCGS14} R. Bieri, Y. Cornulier, L. Guyot and R. Strebel, \textsl{Infinite presentability of groups and condensation}, J. Inst. Math. Jussieu 13 (2014), 811-848. 

\bibitem{CoRaSa99} J.H. Conway, C. Radin and L. Sadun, \textsl{On angles whose squared trigonometric functions are rational}, Discrete Comput. Geom. 22 (1999), 321-332.

\bibitem{Co34} H.S.M. Coxeter, \textsl{Discrete groups generated by reflections}, Ann. of Math. 35 (3) (1934), 588-621.

\bibitem{CoMo79} H.S.M. Coxeter and W.O.J. Moser, \textsl{Generators and relations for discrete
groups}, 4th edition, Springer-Verlag 1979.

\bibitem{FrKl97} R. Fricke and F. Klein, \textsl{Vorlesungen Ÿber die Theorie der automorphen Functionen}, Vol. 1, Teubner 1897.

\bibitem{GaStVo92} G.A. Galperin, A.M. Stepin and Y.B. Vorobets, \textsl{Periodic billiard trajectories in polygons: generating mechanisms}, Russian Math. Surveys 47 (1992), 5-80.

\bibitem{Joh97} D.L. Johnson, \textsl{Presentations of Groups}, London Math. Soc. Student Texts 15, Cambridge University Press 1997.

\bibitem{Mil75} J. Milnor, \textsl{On the 3-dimensional Brieskorn manifolds $M(p,q,r)$}, in ``Knots, groups, and 3-manifolds (Papers dedicated to the memory of R.H. Fox)'', Ann. of Math. Studies 84, Princeton Univ. Press 1975, 175-225.

\bibitem{Rot03} J.J. Rotman, \textsl{Advanced Modern Algebra}, Prentice Hall 2003.

\bibitem{Shm65} A.L. Shmelkin, \textsl{†ber auflšsbare Produkt von Gruppen}, Sib. Mat. Sz. 6 (1965), 212-220.

\end{thebibliography}
\end{document}